\theoremstyle{plain}
\newtheorem{theorem}{Theorem}[section]
\newaliascnt{corollary}{theorem}
\newaliascnt{lemma}{theorem}
\newtheorem{lemma}[lemma]{Lemma}
\newaliascnt{proposition}{theorem}
\newaliascnt{hypotheses}{theorem}
\newcommand{\C}{\mathbf{C}}
\newcommand{\R}{\mathbf{R}}
\newcommand{\Z}{\mathbf{Z}}
\newcommand{\Q}{\mathbf{Q}}
\newcommand{\GL}{\mathrm{GL}}
\newcommand{\SL}{\mathrm{SL}}
\newcommand{\SO}{\mathrm{SO}}
\newcommand{\bS}{\mathbf{S}}
\newcommand{\X}{\mathbf{X}}
\theoremstyle{definition}
\newaliascnt{definition}{theorem}
\newtheorem{definition}[definition]{Definition}
\newaliascnt{example}{theorem}
\newaliascnt{remark}{theorem}
\newaliascnt{remarks}{theorem}
\numberwithin{equation}{section}
\begin{document}

\title[Elliptic units for complex cubic fields]{Elliptic units for complex cubic fields \\[1ex]
\small {\it (On Eisenstein's Jugendtraum)}}

\author{Nicolas Bergeron}
\address{DMA UMR 8553 ENS / PSL et Sorbonne Université, F-75005, Paris, France}
\email{nicolas.bergeron@ens.fr}
\urladdr{https://sites.google.com/view/nicolasbergeron/accueil}

\author{Pierre Charollois}
\address{Sorbonne Universit\'e, IMJ--PRG, CNRS, Univ Paris Diderot, F-75005, Paris, France}
\email{pierre.charollois@imj-prg.fr}
\urladdr{https://webusers.imj-prg.fr/~pierre.charollois}

\author{Luis E. Garc\'ia}
\address{Department of Mathematics, University College London, Gower Street, London WC1E 6BT, United Kingdom}
\email{l.e.garcia@ucl.ac.uk}
\urladdr{https://www.ucl.ac.uk/~ucahljg/index.htm}

\begin{abstract}

We propose a conjecture extending the classical construction of elliptic units to complex cubic number fields $K$. The conjecture concerns special values of the elliptic gamma function, a holomorphic function of three complex variables arising in mathematical physics whose transformation properties under $\mathrm{SL}_3(\Z)$ were studied by Felder and Varchenko in the early 2000s. Using this function we construct complex numbers that we conjecture to be units in narrow ray class fields of $K$. We also propose a reciprocity law for the action of the Galois group on these units in the style of Shimura. 

To support our conjecture we offer numerical evidence and also prove a new type of Kronecker limit formula relating the logarithm of the modulus of these complex numbers to the derivatives at $s=0$ of partial zeta functions of $K$. Our constructions unveil the role played by the elliptic gamma function in Hilbert's twelfth problem for complex cubic fields.  

\end{abstract}
\maketitle

\begin{flushright} {\it To Robert Sczech with admiration}
\end{flushright}

\setcounter{tocdepth}{1}

\tableofcontents

\section{Introduction}

Elliptic units are certain special units in abelian extensions of imaginary quadratic number fields. One key object of this theory is the theta function
$$
\theta_0 (z , \tau  ) =\prod_{n \geq 0} (1-e^{2\pi i (n\tau +z)}) (1-e^{2\pi i ((n+1) \tau - z)}).
$$
The infinite product converges absolutely when $\mathrm{Im}(\tau)>0$ and defines a holomorphic function on the product of the complex plane by the complex upper half-plane $\mathcal{H}$. Up to a factor $ie^{\pi i (z-\tau/6)} \eta (\tau)$, where $\eta$ is the Dedekind eta function, it is the first (odd) Jacobi theta function. The function $\theta_0$ enjoys the periodicity properties 
\begin{equation*}
\begin{split}
\theta_0(z+1,\tau) &= \theta_0(z,\tau) \\
\theta_0(z+\tau,\tau) &= -e^{-2\pi i z}\theta_0(z,\tau)
\end{split}
\end{equation*}
in the variable $z$, as well as well-known transformation properties in the variable $\tau$ under the usual action of the group $\mathrm{SL}_2(\Z)$ on $\C \times \mathcal{H}$.

Taking quotients of values of $\theta_0$ at special points $(v,\tau)$, where $v$ is a rational number and $\tau$ is a quadratic irrationality in $\mathcal{H}$, one obtains elliptic units. For example, let $v=1/3 \in \Q$ and $\tau=(2+i)/5 \in \Q(i)$. Then
$$\theta_0 (v ,\tau)^5 \cdot \theta_0 (5v , 5\tau)^{-1}$$
belongs to an abelian extension of $\Q (i)$ ramified only above $3$. This is a general phenomenon: similar expressions for other $v$ and $\tau$ allow for the analytic construction of units in abelian extensions of imaginary quadratic fields, encode special values of zeta functions through the Kronecker limit formula, and led Stark to his famous conjectures on the existence of special units related to Hecke L-functions. We refer to \cite{Robert,CoatesWiles,deShalit,Rubin} for beautiful expositions of this subject. 

Our article is motivated by the desire to transpose these classical results to the setting of complex cubic fields. We propose a conjecture that is very similar to the theory of elliptic units. The cornerstone analytic function becomes the elliptic gamma function. For complex numbers $z$, $\tau$,  $\sigma$ with $\mathrm{Im}(\tau)>0$ and $\mathrm{Im}(\sigma)>0$, it is defined by the absolutely convergent infinite product
\begin{equation} \label{eq:ell_gamma_intro}
\Gamma (z , \tau , \sigma ) = \prod_{j,k=0}^\infty \frac{1-e^{2\pi i ((j+1)\tau + (k+1) \sigma-z)}}{1-e^{2\pi i (j \tau + k \sigma +z)}}.
\end{equation}
Its domain is then extended to $(z,\tau,\sigma) \in \C \times (\C-\R)^2$ by the rules
$$
\Gamma (z , \tau , \sigma ) =  \frac{1}{\Gamma (z -\tau , -\tau , \sigma )} = \frac{1}{\Gamma (z -\sigma , \tau , -\sigma )}.
$$

Introduced by Ruijsenaars \cite{Ruijsenaard}, this function appears implicitly in Baxter's work on the eight-vertex model in statistical physics. It is a solution of an elliptic version of Euler's functional equation $\Gamma (z+1) = z \Gamma (z)$, in which the rational function $z$ is replaced by the above theta function: we have
\begin{equation*}
\begin{split}
\Gamma (z+1 , \tau , \sigma ) &= \Gamma (z, \tau , \sigma) \\
\Gamma(z+\tau,\tau,\sigma) &= \theta_0(z,\sigma) \Gamma(z,\tau,\sigma),
\end{split}
\end{equation*}
and a similar relation between $\Gamma(z+\sigma,\tau,\sigma)$ and $\Gamma(z,\tau,\sigma)$.
Now three periods $1$, $\tau$, $\sigma$ are involved, and one might expect the elliptic gamma function to have
$\SL_3 (\Z)$-modular properties. This was shown to be the case by Felder and Varchenko \cite{FelderVarchenko}, who studied this function in depth and established identities such as
$$
\Gamma(z/\tau, -1/\tau, \sigma/\tau) = e^{i \pi P(z;\tau,\sigma)} \Gamma((z-\tau)/\sigma,-\tau/\sigma,-1/\sigma) \Gamma(z,\tau,\sigma)
$$
for an explicit polynomial $P \in \Q(\tau,\sigma)[z]$. The resulting transformation properties of $\Gamma(z,\tau,\sigma)$ are higher analogues of the $\SL_2 (\Z)$-modular properties of the theta function $\theta_0(z,\tau)$ \cite{FelderVarchenko,Felder}.

Let $K \subset \C$ be a complex cubic number field. Along the lines of Hilbert's 12th problem we propose a way to produce complex numbers conjecturally lying in specific abelian extensions of $K$. These numbers are obtained by taking products and quotients of the elliptic gamma function evaluated at special points $(v , \tau , \sigma)$ belonging to $K^3$. Let us give a first example before stating the general conjecture.

\bigskip
\noindent {\it Example.} Consider the complex cubic field $K=\Q (\beta)$ with $\beta = \sqrt[3]{7} \cdot e^{-\frac{2i\pi}3 }$, so that $\beta^3 =7$. Define the point
$$
[\tau:\sigma:1] \in \C\mathrm{P}^2-\R\mathrm{P}^2, \quad \tau =\frac{2\beta+\beta^2}{15} \in \overline{\mathcal H} , \quad \sigma =-\frac{2+\beta}{15}\in \mathcal H.$$
The complex number
\begin{equation} \label{Intro:sol} 
\Gamma\left(1/3, \tau , \sigma \right)^5 \cdot \Gamma\left(5/3, 5\tau , 5\sigma \right)^{-1}\approx -4.024029545\ldots -i\cdot 41.85595177\ldots
\end{equation}
coincides (to 1000 digits at least) with a complex root of the polynomial
\begin{multline} \label{Intro:pol}
Q = x^6 + (6\beta^2-14 \beta +2) x^5 + (4 \beta^2 -6 \beta +2) x^4  \\ + (106 \beta^2 -152 \beta - 103) x^3 +(4 \beta^2 -6 \beta +2) x^2 + (6\beta^2-14 \beta +2) x
+ 1.
\end{multline}
This polynomial is irreducible over $K$ and its splitting field $H$ is a cyclic totally complex abelian extension of $K$ of degree $6$ ramified only at $3$. The roots of $Q$ are units in $H$; the polynomial $Q$ being palindromic, the inverse of \eqref{Intro:sol} is also a root of $Q$. The other four roots are (numerically with similar precision) obtained again  using the same formula \eqref{Intro:sol} now with
$$(\tau , \sigma) = \left(\frac{\beta^2+2\beta+75}{345} , - \frac{\beta+32}{345} \right) \quad \mbox{and} \quad  \left(\frac{\beta^2+2\beta+15}{150} , - \frac{\beta-43}{150} \right).$$

\vspace{0.7cm}
Returning to the general setting, let us consider an arbitary complex cubic field $K \subset \C$ and fix a proper ideal $\mathfrak{f}$ of its ring of integers $\mathcal{O}_K$. Class field theory shows that there exists a number field $K(\mathfrak{f}) \subset \C$ containing $K$ that is maximal among abelian extensions of $K$ ramified only at $\mathfrak{f}$. The field $K(\mathfrak{f})$ is called the narrow ray class field of conductor $\mathfrak{f}$.  In Section \ref{section:values_of_elliptic_gamma}, assuming that $K(\mathfrak{f})$ is totally complex, we introduce complex numbers $\mathbf{\Gamma}_{\mathfrak{a},h}(\mathfrak{fb}^{-1})$, depending on $\mathfrak{f}$ as well as on some auxiliary data, obtained as a product of values of the elliptic gamma function \eqref{eq:ell_gamma_intro} evaluated at elements of $K$. 

Our main conjecture is as follows (see Conjecture \ref{S:conj}, we highlight here only two of the three claims):

\noindent {\bf Conjecture.} \begin{enumerate}
    \item The complex numbers $\mathbf{\Gamma}_{\mathfrak{a},h}(\mathfrak{fb}^{-1})$ belong to $K(\mathfrak{f})$. More precisely, they are {\it units} in this number field, that is $\mathbf{\Gamma}_{\mathfrak{a},h}(\mathfrak{fb}^{-1}) \in \mathcal{O}_{K(\mathfrak{f})}^\times$.
    \item The action of the Galois group $\mathrm{Gal}(K(\mathfrak{f})/K)$ on the units $\mathbf{\Gamma}_{\mathfrak{a},h}(\mathfrak{fb}^{-1})$ can be described explicitly: if $\sigma_\mathfrak{b} \in \mathrm{Gal}(K(\mathfrak{f})/K)$ is the element assigned to an ideal $\mathfrak{b}$ relatively prime to $\mathfrak{f}$ by the Artin map, then we have the {\it reciprocity law}
    $$
    \sigma_\mathfrak{b}(\mathbf{\Gamma}_{\mathfrak{a},h}(\mathfrak{f})) = \mathbf{\Gamma}_{\mathfrak{a},h}(\mathfrak{fb}^{-1}).
    $$
\end{enumerate}

These properties are very similar to the expected properties of units whose existence is predicted by celebrated conjectures of Stark, known as {\it Stark units}. In Section \ref{section:relation_with_Starks_conjecture} we describe the precise relation between the two units. We show that our conjecture provides a formula for Stark units themselves rather than just their absolute values. This answers (conjecturally) a question plainly raised by Samit Dasgupta in \cite{Dasgupta}, and unexpectedly relates the elliptic gamma function to Hilbert's 12th problem asking for transcendental functions that provide explicit constructions of class fields.

Throughout the paper we  consider several examples and provide numerical evidence for the above properties. We pay special attention to an example studied by Samit Dasgupta \cite{Dasgupta} and we relate our conjectural units to the alleged Stark unit he found. Building on a subtle study of Shintani zeta functions  associated to cones, Ren and Sczech \cite{RenSczech} have proposed another conjectural formula for the Stark unit. We numerically check on their examples that our formulas give the same complex numbers.

Going beyond numerical evidence, our main result (Theorem \ref{T:KLintro}) relates our elliptic gamma values to the first derivative at $s=0$ of partial zeta functions of $K$. For an ideal $\mathfrak{b}$ coprime to $\mathfrak{f}$, define the partial zeta function
$$
\zeta_\mathfrak{f}(\mathfrak{b},s) = \sum_{\mathfrak{c} \in [\mathfrak{b}]} \frac{1}{\mathrm{N}(\mathfrak{c})^{s}}, \quad \mathrm{Re}(s) > 1.
$$
Here the sum runs over integral ideals $\mathfrak{c}$ defining the same class as $\mathfrak{b}$ in the narrow ray class group of conductor $\mathfrak{f}$ and $\mathrm{N}(\mathfrak{c})$ denotes the absolute norm of $\mathfrak{c}$. These functions admit meromorphic continuation to $s \in \C$ that vanishes at $s=0$. For ideals $\mathfrak{a}$, $\mathfrak{b}$ coprime with $\mathfrak{f}$ we set
$$
\zeta'_{\mathfrak{f},\mathfrak{a}}(\mathfrak{b},0) = \frac{d}{ds} \left( \zeta_{\mathfrak{f}}(\mathfrak{ab},s) - \mathrm{N}(\mathfrak{a}) \zeta_{\mathfrak{f}}(\mathfrak{b},s) \right)|_{s=0}.
$$

\bigskip

\noindent {\bf Theorem.}
{\it Assume that $\mathfrak{a}$, $\mathfrak{b}$ and $\mathfrak{f}$ satisfy the conditions in Section  \ref{subsection:complex_cubic_fields_and_elliptic_gamma_function}. Then}
$$
\zeta_{\mathfrak{f},\mathfrak{a}}'(\mathfrak{b},0) = \log |\mathbf{\Gamma}_{\mathfrak{a},h}(\mathfrak{fb}^{-1}) |^2.
$$

\bigskip

This theorem is an analogue, in the setting of complex cubic fields, of the classical Kronecker limit formula. It will be used in conjunction with our main conjecture to relate our units to Stark units. We have also verified this formula numerically in the examples considered in the paper. Coming back to the example above, the field $H$ equals $K(\mathfrak{f})$ for $\mathfrak{f}$ the unique ideal of norm $3$ in $K$, and for appropriate choices of $\mathfrak{a}$ and $\mathfrak{b}$ our Theorem shows that 
\begin{equation*}
\zeta'_{\mathfrak{f},\mathfrak{a}}(\mathfrak{b},0) = \log \left|\frac{\Gamma\left(1/3, \tau , \sigma \right)^5}{\Gamma\left(5/3, 5\tau , 5\sigma \right)} \right|^2.
\end{equation*}

For the proof of Theorem \ref{T:KLintro} we consider a fiber bundle over the locally symmetric space of a congruence subgroup of $\mathrm{SL}_3(\Z)$, with fiber the period domain $\C\mathrm{P}^2- \R \mathrm{P}^2$. We then introduce Eisenstein series on this space and relate two periods for them. The first, along a closed geodesic, equals the derivative at $0$ of a partial zeta function. We prove that the second period, along a new kind of modular symbol, equals the logarithm of the absolute value of products and quotients of the elliptic gamma function. Our Eisenstein series arise naturally from the equivariant point of view developed in \cite{BCG_CRM}; the period comparison is similar to the main argument in \cite{ColmezNous}.

Several authors have worked on generalising Kronecker's limit formulas and the related question of finding the correct analog of $|\eta(\tau)|$, see e.g. \cite{Efrat2,Efrat1, BekkiHecke}. The resulting formulas take a complicated form involving integration and are not simply obtained by taking $\log |\cdot|$ of a function with modular properties. Note also that the elliptic gamma function is not just an automorphic form but part of an ``automorphic $1$-cocycle'' for $\mathrm{SL}_3(\Z)$~\cite{FelderVarchenko}.

The geometric picture underlying our main construction is analogous to that of the recent work of Darmon and Vonk \cite{DarmonVonk} on explicit class field theory for real quadratic fields. In general other related works, like \cite{DD, CharolloisDarmon, DarmonL, DarmonPozziVonk, DK}, are either non-archimedean or restricted to other types of fields.

\medskip

\noindent {\bf Acknowledgment.} We would like to thank Hohto Bekki for his careful reading and comments on a preliminary draft.

\section{Values of the elliptic gamma function -- a conjecture}\label{section:values_of_elliptic_gamma}

In this section we state our main conjecture. After reviewing the definition and main properties of the elliptic gamma function in \textsection \ref{subsection:elliptic_gamma_function}, we proceed to define a variant of this function depending on some arithmetic data, involving a choice of complex cubic field and some ideals, in \textsection \ref{subsection:complex_cubic_fields_and_elliptic_gamma_function}. We then state our conjecture regarding the algebraicity and integrality of certain values of these functions, as well as a reciprocity law for the Galois action, in \textsection \ref{S:conj}. A first numerical example is discussed in detail in \textsection \ref{subsection:first_numerical_example}.

\subsection{The elliptic gamma function} \label{subsection:elliptic_gamma_function}

Let $L$ be a free abelian group of rank three and write $\Lambda$ for its dual $\mathrm{Hom}_{\Z} ( L , \Z )$.  We fix an orientation of $L$ that we denote by $\det$, i.e. an isomorphism $\det: \wedge^3_\Z L \to \Z$. Let $V=\Lambda \otimes_\Z \Q$ and write similarly $V_\R$ and $V_\C$ for the three--dimensional vector spaces over $\R$ and $\C$ obtained from $V$ by extending scalars; our fixed orientation on $L$ induces orientations on all these vector spaces that we still denote by $\det$.

Primitive elements in $\Lambda$ are in one-to-one correspondence with oriented planes in $L \otimes \Q$: the plane $H(a)$ corresponding to a primitive element $a \in \Lambda$ is the kernel of $a$, which divides $L$ into two subsets
$$L_+ (a) = \{ \alpha \in L \; \big| \; a (\alpha ) >0 \} \quad \mbox{and} \quad L_- (a) = \{ \alpha \in L \; \big| \; a (\alpha ) \leq 0 \}.$$
An ordered basis $\lambda , \mu$ of $H(a)$ is (positively) oriented if $\det (\lambda , \mu , \delta )>0$ whenever $\delta (a) >0$. A primitive element $a \in \Lambda$ also determines an open subset $U_a \subset V_\C$ defined by 
\begin{equation} \label{eq:def_U_a}
U_a = \left\{ z \in V_\C \; \big| \;  \mathrm{Im} \left(\lambda (z) \overline{\mu (z)} \right) >0 \right\}
\end{equation}
for any oriented basis $(\lambda,\mu)$ of $H(a)$.

Fix an ordered pair $(a, b)$ of linearly independent primitive elements of $\Lambda$. It determines oriented planes $H(a), H(b) 
 \subset L \otimes \Q$ that intersect in a line spanned by the element $\det (a , b , \cdot) \in \Lambda^\vee = L$. This element is not primitive in general, but we may write
\begin{equation}\label{defsgamma1}
\det (a,b , \cdot ) = s \gamma 
\end{equation}
for a  unique pair $(s,\gamma)$ consisting of a positive integer $s$ and a primitive element $\gamma$ of $L$.

To the pair $(a,b)$ we also associate the cones
\begin{equation}
\begin{split}
C_{\texttt{+-}} (a , b ) &= \{ \delta \in L\; \big| \; \delta (a) >0, \ \delta (b) \leq 0 \} \\
C_{\texttt{-+}} (a , b ) &= \{ \delta \in L\; \big| \; \delta (a) \leq  0 , \ \delta (b) > 0 \}.
\end{split}
\end{equation}
Following first works of Ruijsenaars \cite{Ruijsenaard} and Felder and Varchenko \cite{FelderVarchenko}, Felder, Henriques, Rossi and Zhu \cite{Felder} defined the {\it elliptic gamma function}
\begin{equation}\label{felder-v-def}
\Gamma_{a,b} (w , z ; L)  = \frac{\prod_{\delta \in C_{\texttt{+-}} (a , b) / \Z \gamma} \left(1-e^{-2\pi i (\delta (z) -w ) / \gamma (z)} \right)}{\prod_{\delta \in C_{\texttt{-+}} (a , b) / \Z \gamma} \left(1-e^{2\pi i (\delta (z) -w ) / \gamma (z)} \right)}\cdot
\end{equation}
The product \eqref{felder-v-def} converges to a meromorphic function on $\C \times (U_{a} \cap U_b )$. It has simple zeros at $w = \delta (z) + n \gamma (z)$, with $\delta \in C_{\texttt{+-}} (a , b )$, $n \in \Z$, and simple poles at $w = \delta (z) + n \gamma (z)$, with $\delta \in C_{\texttt{-+}} (a , b )$, $n \in \Z$ (\cite[Prop. 3.3]{Felder}).

It is also proved in \cite{Felder} that the function \eqref{felder-v-def} can be expressed in terms of the basic elliptic gamma function $\Gamma$ defined in the introduction.  For this, let $\alpha, \beta \in L$ be such that 
\begin{equation}\label{defalphabeta1}
\alpha (b) = \beta (a) = 0 \quad \mbox{and} \quad \alpha (a) >0, \quad \beta (b) >0.
\end{equation}
Then $(\alpha,\beta,\gamma)$ is an oriented basis of $L \otimes \Q$, hence $(\beta, \gamma)$ and $(\gamma,\alpha)$ are oriented bases of $H(a)$ and $H(b)$ respectively and so
\begin{equation}
U_{a} \cap U_b = \{ z \in V_\C \; \big| \; \mathrm{Im} (\alpha (z) / \gamma (z) ) <0 \mbox{ and } \mathrm{Im} (\beta (z) / \gamma (z) ) >0 \}.
\end{equation}
Then Proposition 3.5 of loc. cit. shows that
\begin{equation} \label{eq:Gamma_a_b_as_prod_over_F}
\Gamma_{a,b} (w , z ; L)  = \prod_{\delta \in F / \Z \gamma} \Gamma \left( \frac{w+\delta (z)}{\gamma (z)} , \frac{\alpha (z)}{\gamma (z)} , \frac{\beta (z)}{\gamma (z)} \right).
\end{equation}
Here the product is finite and $F$ is the set of $\delta \in L$ such that 
$$0 \leq \delta (a) < \alpha (a) \quad \mbox{and} \quad 0 \leq \delta (b) < \beta (b).$$

The group $\SL (\Lambda) \cong \SL_3 (\Z)$ acts on $\Lambda$, $V$, $V_\C$. If $a$ is a primitive vector in $\Lambda$ and $g$ an element in $\SL(\Lambda)$, then $ga$ is primitive, we have $U_{ga} = g U_a$, and 
\begin{equation} \label{eq:Gamma_a_b_equivariance}
\Gamma_{ga,gb} (w , gz ; L) = \Gamma_{a,b} (w , z ; L ), \quad z \in U_a \cap U_b .
\end{equation}
The group $\SL (\Lambda)$ also acts on $L$ by duality.

\subsection{Complex cubic fields and values of the elliptic gamma function} \label{subsection:complex_cubic_fields_and_elliptic_gamma_function}

Let $K$ be a complex cubic number field over $\Q$ and let $\mathcal{O}_K$ be its ring of integers. We denote by $\sigma_\R:K \hookrightarrow \R$ the unique real embedding of $K$ and we fix once and for all a complex embedding $\sigma_\C: K \hookrightarrow \C$ of $K$. The map $x \mapsto (\sigma_\R(x),\sigma_\C(x))$ induces an isomorphism  $K \otimes \R \simeq \R \times \C$ and hence an orientation on $K$ by pulling back the standard orientation on $\R \times \C$.

Let $\mathfrak{f}$ and $\mathfrak{b}$ be non-zero ideals of $\mathcal{O}_K$ with $(\mathfrak{f}, \mathfrak{b})=1$. Write $L=\mathfrak{fb}^{-1}$; it is a free $\Z$--module of rank three and a fractional ideal of $K$ and the unit $1_K \in K$ is a primitive $\mathfrak{f}$-division point of $L$. We denote by $q$ the smallest positive integer in $L \cap \Z$ (equivalently, in $\mathfrak{f} \cap \Z$), so that $q$ is the order of $1_K$ in $L \otimes \Q/L$. The group of units 
$$\mathcal{O}_{\mathfrak{f}}^{+ ,\times} = \{ u \in \mathcal{O}_K^\times \; \big| \; u - 1_K \in \mathfrak{f}  \mbox{ and } \sigma_\R(u) >0 \}$$ 
is of rank $1$, it acts by multiplication on the $3$-dimensional $\Q$-vector space $K$ and preserves the lattice $L$. We assume that this group equals $\mathcal{O}_K^\times \cap (1+\mathfrak{f})$, i.e. that there are no units in $\mathcal{O}_K^\times \cap (1+\mathfrak{f})$ of negative norm (in particular, this implies that $-1 \notin 1+\mathfrak{f}$ and hence that $\mathfrak{f} \neq \mathcal{O}_K$ and $q \geq 3$); equivalently, the real place of $K$ ramifies in the narrow ray class field of $K$ of conductor $\mathfrak{f}$. We fix the generator $\varepsilon \in \mathcal{O}_{\mathfrak{f}}^{+ ,\times}$ such that $\varepsilon_\R:=\sigma_\R(\varepsilon) \in (0,1)$.

We will now define some complex numbers $\mathbf{\Gamma}_{\mathfrak{a},h}(L)$ using the elliptic gamma function. As the notation indicates, these depend on $L$ as well as on some auxiliary data consisting of an ideal $\mathfrak{a} \subset \mathcal{O}_K$ and an element $h \in L \otimes \Q$. These data must satisfy the following conditions: for the ideal $\mathfrak{a}$ we require that
\begin{equation} \label{eq:conditions_on_aa}
\mathfrak{a} \text{ is relatively prime to }\mathrm{N}(\mathfrak{f}) \text{ and } \mathrm{N}(\mathfrak{a}) \text{ is an odd prime,}
\end{equation} 
while for $h$ we require that
\begin{equation} \label{eq:h_and_lambda}
h=\lambda/q
\end{equation}
for $\lambda$ an {\it admissible} element of $L$. Here we say that  $\lambda \in L$ is admissible if it satisfies the following congruence conditions:
\begin{itemize}
\item $\lambda/q$ is congruent to $1_K$ modulo $L$,
\item $\lambda/\mathrm{N}(\mathfrak{a})$ belongs to $\mathfrak{a}^{-1}L$ and generates the quotient $\mathfrak{a}^{-1}L/L$.
\end{itemize}
These conditions are equivalent to $\lambda \in \mathrm{N}(\mathfrak{a})\mathfrak{a}^{-1}L$ with $\lambda \equiv q \mod qL$ and $\lambda \not\equiv 0 \mod \mathrm{N}(\mathfrak{a})L$, and so the existence of admissible elements follows from the isomorphism 
$$
\mathrm{N}(\mathfrak{a})\mathfrak{a}^{-1}\mathfrak{f}/(q\mathrm{N}(\mathfrak{a}))\mathfrak{f} \simeq \mathfrak{f}/q\mathfrak{f} \times \mathrm{N}(\mathfrak{a})\mathfrak{a}^{-1}/(\mathrm{N}(\mathfrak{a})).
$$
We note that if $\lambda$ is an admissible element of $L$ and $u \in \mathcal{O}_\mathfrak{f}^{+, \times}$, then $u\lambda$ is an admissible element of $L$. If $\lambda$ is admissible for $L=\mathfrak{fb}^{-1}$ and $\alpha \in 1+\mathfrak{f}$, then $\alpha^{-1} \lambda$ is admissible for $\mathfrak{f}((\alpha)\mathfrak{b})^{-1}$.

Let us now fix $\mathfrak{a}$ and $h=\lambda/q$ with $\lambda \in L$ admissible as above. The restriction of our fixed complex embedding $\sigma_\C: K \to \C$ to $L$ defines a non-zero element 
\begin{equation}
{\bf x} \in V_\C = \mathrm{Hom} (L , \C)
\end{equation}
and therefore a point $\langle {\bf x} \rangle$ in 
\begin{equation}\mathbf{X} = ( V_\C - \C \cdot V_\R ) / \C^\times = \C \mathrm{P}^{2} - \R \mathrm{P}^{2}.
\end{equation}
It will be convenient to write write $\det: \wedge_\Z^3 L \to \Z$ for the isomorphism induced by the orientation of $K$ fixed above. The unit $\varepsilon$ acts by multiplication on $K$ and preserves $L$ and its  orientation. It therefore defines a linear map in $\SL (L)$ and, by duality, a linear map in $\SL(\Lambda)$. We loosely denote the latter by the same symbol $\varepsilon$. Note that the action of $\varepsilon$ on $V_\C$ preserves the complex line $\langle {\bf x} \rangle \in \mathbf{X}$. In fact, the vector ${\bf x}$ is an eigenvector of the linear map $\varepsilon$ with eigenvalue $\sigma_\C(\varepsilon)^{-1}$.

\begin{lemma} \label{L:L22}
Let $\lambda \in L$ be admissible. There exists a unique primitive element $a=a_\lambda \in \Lambda$ such that 
\begin{itemize}
\item[(i)] $\det (a , \varepsilon  a , \cdot ) \in \Lambda^\vee = L $ is a non-zero multiple of $\lambda$, and
\item[(ii)] ${\bf x} \in U_a$.
\end{itemize}
\end{lemma}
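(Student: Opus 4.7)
The plan is to use condition (i) to pin down the rational line spanned by $a$ in $\Lambda_\Q$, and then to use condition (ii) to fix the sign of its primitive generator. The key algebraic input is that $\varepsilon$ generates $K$ as a $\Q$-algebra: since $\mathcal{O}_{\mathfrak{f}}^{+,\times}$ has rank one, the unit $\varepsilon$ is non-torsion, hence $\varepsilon \neq \pm 1$ and $\varepsilon \notin \Q$; as $[K:\Q]=3$ is prime, this forces $K = \Q(\varepsilon)$. Consequently $\varepsilon$ acts on $L\otimes\Q \simeq K \simeq \Lambda\otimes\Q$ with an irreducible minimal polynomial of degree three, so in particular $\{v,\varepsilon v,\varepsilon^2 v\}$ is a $\Q$-basis of $\Lambda_\Q$ for every nonzero $v$.

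\noindent\textbf{Step 1 (existence up to sign).} Unraveling the definition, the element $\det(a,\varepsilon a,\cdot)\in\Lambda^\vee=L$ is a nonzero multiple of $\lambda$ iff the $\Q$-plane $\Q a + \Q\varepsilon a \subset \Lambda_\Q$ coincides with the annihilator of $\lambda$ under the natural pairing $\Lambda_\Q \times L_\Q \to \Q$. This is equivalent to requiring both $a$ and $\varepsilon a$ to annihilate $\lambda$, which via the dual $\varepsilon$-action (for which $\mathbf{x}$ has eigenvalue $\sigma_\C(\varepsilon)^{-1}$) translates to $a$ killing the plane $P = \Q\lambda+\Q\varepsilon^{-1}\lambda\subset L_\Q$. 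Admissibility of $\lambda$ forces $\lambda\equiv q\pmod{qL}$, hence $\lambda\neq 0$; combined with $\varepsilon^{-1}\notin\Q$ this gives $\dim_\Q P = 2$, so its annihilator is a line $D\subset\Lambda_\Q$. Conversely any $a\in D\setminus\{0\}$ produces a $\Q$-linearly independent pair $(a,\varepsilon a)$ spanning the annihilator of $\lambda$, whence $\det(a,\varepsilon a,\cdot)$ is a nonzero multiple of $\lambda$. The primitive generators of $D\cap\Lambda$ are $\pm a_0$, both satisfying (i).

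\noindent\textbf{Step 2 (fixing the sign via (ii)).} Let $(\lambda_0,\mu_0)$ be an oriented basis of $H(a_0)$. Then $H(-a_0)=H(a_0)$ with reversed orientation, so $U_{a_0}$ and $U_{-a_0}$ are the two disjoint open subsets of $V_\C$ on which $\mathrm{Im}(\lambda_0(z)\overline{\mu_0(z)})$ is respectively positive and negative. It remains to verify that $\mathbf{x}$ does not lie on the real hypersurface where this imaginary part vanishes. At $z=\mathbf{x}$ the expression becomes $\mathrm{Im}(\sigma_\C(\lambda_0)\overline{\sigma_\C(\mu_0)})$, which is zero iff $\sigma_\C(\lambda_0/\mu_0)\in\R$. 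I expect this final step to be the only really subtle point of the argument, and it uses the complex-cubic nature of $K$ essentially: if $\alpha\in K$ satisfies $\sigma_\C(\alpha)\in\R$, then $\overline{\sigma}_\C(\alpha)=\overline{\sigma_\C(\alpha)}=\sigma_\C(\alpha)$, so the distinct embeddings $\sigma_\C$ and $\overline{\sigma}_\C$ agree on $\Q(\alpha)$; since the cubic field $K$ has no proper intermediate subfield, this forces $\Q(\alpha)=\Q$, i.e.\ $\alpha\in\Q$. Applied to $\alpha=\lambda_0/\mu_0$ this would contradict the $\Q$-linear independence of $\lambda_0,\mu_0$. Hence $\mathbf{x}$ sits strictly inside exactly one of $U_{a_0}$, $U_{-a_0}$, fixing a unique sign for $a_\lambda=\pm a_0$ and completing the proof.
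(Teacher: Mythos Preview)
Your proof is correct and follows essentially the same approach as the paper: reduce condition (i) to $a\in\ker(\lambda)\cap\ker(\varepsilon^{-1}\lambda)$, note this is a line with two primitive generators $\pm a_0$, and then use condition (ii) to fix the sign. The paper's proof simply asserts that ``only one of them will satisfy (ii)'' without justification; your Step~2 supplies a clean argument for this (namely, that $\mathrm{Im}\big(\sigma_\C(\lambda_0)\overline{\sigma_\C(\mu_0)}\big)=0$ would force $\lambda_0/\mu_0\in\Q$ since $K$ has no proper intermediate subfields), so your write-up is in fact more complete than the paper's at this point.
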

\begin{proof}
Since $\varepsilon$ has no rational eigenvalues, the elements $\lambda$ and $\varepsilon^{-1}\lambda \in L \otimes \Q$ are linearly independent. The condition that $\det(a,\varepsilon a, \cdot)=n \lambda$ for some scalar $n$ is equivalent to $a \in \ker(\lambda) \cap \ker(\varepsilon^{-1}\lambda)$, which has exactly two primitive solutions $\pm a \in \Lambda$. These correspond to the two orientations on the same plane $H(\pm a) \subset L \otimes \Q$, and only of them will satisfy $(ii)$. 
\end{proof}

Let us fix $\mathfrak{a}$ and $h$ satisfying the conditions above, write $a$ for the element of $\Lambda$ given by the lemma and define $b=\varepsilon a$. It follows from condition (i) in the Lemma that $a$ and $b$ both vanish on $\lambda$ and hence --- since $\mathfrak{a}^{-1}L = L + \Z \lambda/\mathrm{N}(\mathfrak{a})$ --- take integral values on $\mathfrak{a}^{-1} L$.  The linear forms $a$ and $b$ define primitive elements of $\mathrm{Hom}_\Z (\mathfrak{a}^{-1} L , \Z)$. The elliptic gamma function $\Gamma_{a,b} (w , z, \mathfrak{a}^{-1} L)$ is therefore well defined, and is given by the following formula. We write $\gamma=\gamma(a,b)$ for the primitive element in $L$ defined uniquely by the condition $\det(a, \varepsilon a, \cdot)=s \gamma$ for some $s \geq 1$ (cf. \eqref{defsgamma1}).

\begin{lemma}[Distribution relation]
We have 
\begin{equation*}
\Gamma_{a,b} (w , z ; \mathfrak{a}^{-1} L) = 
\prod_{k=0}^{\mathrm{N} (\mathfrak{a}) -1}  \Gamma_{a,b} \left( w + k\frac{\gamma (z)}{ \mathrm{N}(\mathfrak{a})} , z ; L \right).
\end{equation*}
\end{lemma}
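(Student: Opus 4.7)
The plan is to compare the defining products for $\Gamma_{a,b}(w,z;\mathfrak{a}^{-1}L)$ and $\Gamma_{a,b}(w,z;L)$ from \eqref{felder-v-def} after pinning down the relationship between the primitive vectors and the two indexing sets of cones modulo translation. The key elementary identity in the background is the factorization
$$1 - X^N = \prod_{k=0}^{N-1}\bigl(1 - e^{2\pi i k/N}X\bigr),$$
applied with $N = \mathrm{N}(\mathfrak{a})$.

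\textbf{Step 1: Identify the primitive vectors.} Write $\gamma = \gamma(a,b;L)$ and $\gamma' = \gamma(a,b;\mathfrak{a}^{-1}L)$, both lying on the line $\ell = H(a)\cap H(b)\otimes \Q$. Because $\lambda \in L \cap \ell = \Z\gamma$, I may write $\lambda = s_L\gamma$, and the admissibility condition $\lambda\notin \mathrm{N}(\mathfrak{a})L$ forces $\gcd(s_L,\mathrm{N}(\mathfrak{a}))=1$ (using that $\mathrm{N}(\mathfrak{a})$ is prime). A Bezout relation between $s_L$ and $\mathrm{N}(\mathfrak{a})$ then gives $\gamma/\mathrm{N}(\mathfrak{a}) \in \mathfrak{a}^{-1}L$, and a counting argument using $|\mathfrak{a}^{-1}L/L| = \mathrm{N}(\mathfrak{a})$ shows that no further division is possible; hence $\gamma' = \gamma/\mathrm{N}(\mathfrak{a})$. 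In particular $\gamma'(z) = \gamma(z)/\mathrm{N}(\mathfrak{a})$.

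\textbf{Step 2: Compare the indexing sets.} The cones $C_{\texttt{+-}}(a,b)$ and $C_{\texttt{-+}}(a,b)$ depend only on the image of $\delta$ in $(L\otimes \Q)/\Q\gamma$. Since $\mathfrak{a}^{-1}L/L$ is generated by the class of $\lambda/\mathrm{N}(\mathfrak{a}) \in \Q\gamma$, this class vanishes in the projection $(L\otimes \Q)/\Q\gamma$, so the projections of $L$ and of $\mathfrak{a}^{-1}L$ coincide. This induces a natural bijection
$$C_{\texttt{+-}}(a,b;L)/\Z\gamma \;\xrightarrow{\sim}\; C_{\texttt{+-}}(a,b;\mathfrak{a}^{-1}L)/\Z\gamma',$$
and similarly for $C_{\texttt{-+}}$, so every $\Z\gamma'$-coset in $\mathfrak{a}^{-1}L$ relevant to the left-hand product has a representative in $L$.

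\textbf{Step 3: Factor each Euler factor.} For a representative $\delta \in L$, the corresponding factor on the left is
$$1 - e^{-2\pi i(\delta(z)-w)/\gamma'(z)} = 1 - X^{\mathrm{N}(\mathfrak{a})},\qquad X = e^{-2\pi i(\delta(z)-w)/\gamma(z)}.$$
Applying the cyclotomic identity gives
$$1 - X^{\mathrm{N}(\mathfrak{a})} = \prod_{k=0}^{\mathrm{N}(\mathfrak{a})-1}\Bigl(1 - e^{-2\pi i(\delta(z)-w - k\gamma(z)/\mathrm{N}(\mathfrak{a}))/\gamma(z)}\Bigr),$$
whose $k$-th factor is precisely the $\delta$-contribution to the numerator of $\Gamma_{a,b}(w+k\gamma(z)/\mathrm{N}(\mathfrak{a}),z;L)$. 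The denominator factorizes the same way (one only has to reindex $k \mapsto -k$ mod $\mathrm{N}(\mathfrak{a})$). Taking the product over $\delta$ and swapping the product orders yields the claimed identity.

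\textbf{Main obstacle.} The mechanical step is Step 3; the only genuinely delicate bookkeeping is Steps 1--2, where one must be careful that the primitive element $\gamma$ and the cone quotients rescale exactly by $\mathrm{N}(\mathfrak{a})$ under passage from $L$ to $\mathfrak{a}^{-1}L$. This is where the admissibility hypothesis on $\lambda$ and the primality of $\mathrm{N}(\mathfrak{a})$ enter crucially; without these one would pick up additional combinatorial factors and the clean product over $k=0,\ldots,\mathrm{N}(\mathfrak{a})-1$ would not survive.
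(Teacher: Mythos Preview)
Your proof is correct and follows essentially the same route as the paper's own argument: both establish $\mathfrak{a}^{-1}L = L + \Z(\gamma/\mathrm{N}(\mathfrak{a}))$ via B\'ezout (using that $\lambda$ is a multiple of $\gamma$ coprime to $\mathrm{N}(\mathfrak{a})$), deduce a bijection between the cone quotients, and then apply the cyclotomic factorization $1-X^N=\prod_k(1-e^{2\pi ik/N}X)$ termwise. The only cosmetic difference is that the paper phrases the bijection via an explicit choice of basis $\{\lambda_1,\lambda_2,\gamma\}$ of $L$, whereas you argue intrinsically via the projection to $(L\otimes\Q)/\Q\gamma$; the content is the same.
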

\begin{proof}
Since $\lambda, \gamma \in L$ both belong to the line $\ker(a) \cap \ker(\varepsilon a)$ and $\gamma$ is primitive, we have $\lambda = n\gamma$ for some integer $n$; moreover $(n,\mathrm{N}(\mathfrak{a}))=1$ since $\lambda/\mathrm{N}(\mathfrak{a}) \notin L$. Writing $nx+ \mathrm{N}(\mathfrak{a})y=1$ for some integers $x$ and $y$, this shows that $x\lambda/\mathrm{N}(\mathfrak{a})=\gamma/\mathrm{N}(\mathfrak{a})-y \gamma$ and hence that 
$$
\mathfrak{a}^{-1}L=L+\Z \lambda/\mathrm{N}(\mathfrak{a}) = L+\Z \gamma/\mathrm{N}(\mathfrak{a}).
$$

The identity now follows from the definition \eqref{felder-v-def}: let us complete $\gamma$ to a basis $\{\lambda_1,\lambda_2,\gamma\}$ of $L$. Then $\{\lambda_1, \lambda_2, \gamma/\mathrm{N}(\mathfrak{a})\}$ is a basis of $\mathfrak{a}^{-1}L$, and the map sending a vector $a \lambda_1 + b \lambda_2 +c \gamma \in L$ to $a \lambda_1 + b \lambda_2 +c \gamma/\mathrm{N}(\mathfrak{a}) \in \mathfrak{a}^{-1}L$ induces a bijection between $C_{{\texttt{+-}}} (a,b)$ and the set of vectors $\delta' \in  \mathfrak{a}^{-1} L$ with $\delta ' (a) >0$ and $\delta' (b ) \leq 0$. Classes modulo $\Z \gamma$ correspond to classes modulo $\Z \gamma / \mathrm{N} (\mathfrak{a})$. We therefore get the following relation:
\begin{equation*}
\begin{split}
\prod_{k=0}^{\mathrm{N} (\mathfrak{a}) -1}  \Gamma_{a,b} \left( w + k\frac{\gamma (z)}{ \mathrm{N} (\mathfrak{a})} , z  ; L \right)
& = \frac{\prod_{\delta \in C_{\texttt{+-}} (a , b) / \Z \gamma} \left(1-e^{-2\pi i \mathrm{N} (\mathfrak{a}) (\delta (z) -w ) / \gamma (z)} \right)}{\prod_{\delta \in C_{\texttt{-+}} (a , b) / \Z \gamma} \left(1-e^{2\pi i \mathrm{N} (\mathfrak{a}) (\delta (z) -w ) / \gamma (z)} \right)} \\
& = \Gamma_{a,b} (w , z ; \mathfrak{a}^{-1} L).
\end{split}
\end{equation*}
\end{proof}

We may then define the \emph{smoothed elliptic gamma function} to be the ``quotient of quotients''\footnote{cf. Eisenstein \cite{Eisenstein}, see appendix.}
\begin{equation} \label{eq:def_smoothed_ell_gamma_function}
\mathbf{\Gamma}_{\mathfrak{a} , h} (w , z ; L )  =  \frac{\Gamma_{a, b} (w , z ; \mathfrak{a}^{-1} L )}{\Gamma_{a,b} (w,z ; L)^{\mathrm{N} (\mathfrak{a})}}.
\end{equation}

It converges to a meromorphic function on $\C \times (U_{a} \cap U_b )$. Note that, since the elliptic gamma function $\Gamma_{a,b}$ in \eqref{felder-v-def} satisfies $\Gamma_{b,a} = \Gamma_{a,b}^{-1}$, we may also write
\begin{equation*}
\mathbf{\Gamma}_{\mathfrak{a} , h} (w , z ; L ) = \frac{\Gamma_{b,\varepsilon^{-1} b} (w,z ; L)^{\mathrm{N} (\mathfrak{a})}}{\Gamma_{b, \varepsilon^{-1}b} (w , z ; \mathfrak{a}^{-1} L )}.
\end{equation*}

The notation here is to recall that $\mathbf{\Gamma}_{\mathfrak{a} , h} (w , z ; L )$ depends only on the choice of $\mathfrak{a}$ and $h$ and (oriented) lattice $L=\mathfrak{fb}^{-1}$.

Finally, note that since ${\bf x}$ belongs to $U_a$ and $\mathbf{x}$ is an eigenvector of $\varepsilon$, we also have $\mathbf{x} \in U_b$. Thus we can evaluate $\mathbf{\Gamma}_{\mathfrak{a},h}(w,z;L)$ at $z=\mathbf{x}$. Since $\mathfrak{a}$ satisfies \eqref{eq:conditions_on_aa}, we have $(\mathrm{N}(\mathfrak{a}),q)=1$ and hence $h=\lambda/q \notin \mathfrak{a}^{-1}L$. From the description of the divisor of $\Gamma_{a,b}$ below \eqref{felder-v-def} we conclude that the complex number $w=h(\mathbf{x})$ lies away from the zeroes and poles of $\mathbf{\Gamma}_{\mathfrak{a},h}(w,\mathbf{x};L)$. We arrive at our main definition.
\begin{definition}
\begin{equation*}
\mathbf{\Gamma}_{\mathfrak{a},h}(L) = \mathbf{\Gamma}_{\mathfrak{a},h}(h(\mathbf{x}),\mathbf{x};L) \in \C^\times.
\end{equation*}
\end{definition}

Below we formulate a precise conjecture stating that under certain conditions the numbers $\mathbf{\Gamma}_{\mathfrak{a},h}(L)$ are independent of the choice of $h$. For now we limit ourselves to the following remarks that follow from the equivariance property \eqref{eq:Gamma_a_b_equivariance} and the discussion after \eqref{eq:h_and_lambda}.

\medskip
\noindent
{\it Remarks.} Fix a lattice $L=\mathfrak{fb}^{-1}$ as above and an ideal $\mathfrak{a}$ satisfying \eqref{eq:conditions_on_aa}.
\begin{enumerate}[leftmargin=*]
\item Let $u \in \mathcal{O}_\mathfrak{f}^{+,\times}$ and $h=\lambda/q$ with $\lambda$ admissible for $L$. Then $u\lambda$ is admissible for $L$. The element $a=a_\lambda \in \Lambda$ defined by Lemma \ref{L:L22} satisfies $a_{u\lambda}=ua_\lambda$ and hence, since $\mathbf{x}$ is an eigenvector of $u$, we have
$$
\mathbf{\Gamma}_{\mathfrak{a},uh}(L)=\mathbf{\Gamma}_{\mathfrak{a},h}(L).
$$
\item Suppose that $\alpha \in 1+\mathfrak{f}$ satisfies $\sigma_\R(\alpha)>0$. Since $\mathrm{N}(\alpha)=\sigma_\R(\alpha)|\sigma_\C(\alpha)|^2$, multiplication by $\alpha$ respects the orientation on $K$. Let $h=\lambda/q$ with $\lambda$ admissible for $L$; then $\alpha^{-1}h=(\alpha^{-1}\lambda)/q$ with $\alpha^{-1} \lambda$ admissible for $(\alpha)^{-1}L$. The primitive element $a_{\alpha^{-1}\lambda}$ in $\mathrm{Hom}((\alpha)^{-1}L,\Z)$ provided by Lemma \ref{L:L22} equals $\alpha^{-1}a_\lambda$. Since $\mathbf{x}$ is an eigenvector of $\alpha$, it follows that
\begin{equation} \label{eq:Gamma_invariance_P_+_ff}
\mathbf{\Gamma}_{\mathfrak{a},\alpha^{-1}h}((\alpha)^{-1}L)=\mathbf{\Gamma}_{\mathfrak{a},h}(L), \quad \alpha \in 1+ \mathfrak{f}, \quad \sigma_\R(\alpha)>0.
\end{equation}

\item Suppose that $\alpha \in -1+\mathfrak{f}$ satisfies $\sigma_\R(\alpha)>0$. Let $h=\lambda/q$ with $\lambda$ admissible for $L$; then $-\alpha^{-1}\lambda$ is admissible for $(\alpha)^{-1}L$. Arguing as above, we find that the primitive element associated with $-\alpha^{-1}\lambda$ provided by Lemma \ref{L:L22} is $\alpha^{-1}a_\lambda$. Hence we have $\gamma(\alpha^{-1}a,\alpha^{-1}b)=\alpha^{-1}\gamma(a,b)$ for the vector determined by the wedge $(\alpha^{-1}a,\alpha^{-1}b)$, and since $\mathbf{x}$ is an eigenvector of $\alpha$ we find that
\begin{equation}
\label{eq:Gamma_invariance_P_ff}
\mathbf{\Gamma}_{\mathfrak{a},-\alpha^{-1}h}((\alpha)^{-1}L) = \mathbf{\Gamma}_{\mathfrak{a},h}(-h(\mathbf{x}),\mathbf{x};L).
\end{equation}
We will show in \eqref{remark:evaluation_-h} that up to a third root of unity (trivial when $\mathfrak{a}$ is coprime to $6$), this number equals $\mathbf{\Gamma}_{\mathfrak{a},h}(L)^{-1}$.

\item Let us write temporarily $\mathbf{\Gamma}_{\mathfrak{a},h}(L,\sigma_\C)$ for $\mathbf{\Gamma}_{\mathfrak{a},h}(L)$ to emphasise the role of the fixed complex embedding $\sigma_\C$. Fixing $h=\lambda/q$ with $\lambda$ admissible, upon replacing $\sigma_\C$ by its complex conjugate $\overline{\sigma}_\C$, the orientation of $K$ changes and so the element $a_\lambda$ associated with $\lambda$ provided by Lemma \ref{L:L22} stays the same. Hence so do the elements $\alpha$ and $\beta$ in \eqref{defalphabeta1} and the set $F$, while the vector $\gamma=\gamma(a,\varepsilon a)$ in \eqref{defsgamma1} changes sign. Thus replacing $\sigma_\C$ by $\overline{\sigma}_\C$ changes the quantities $\tau=\alpha(\mathbf{x})/\gamma(\mathbf{x})$ and $\sigma=\beta(\mathbf{x})/\gamma(\mathbf{x})$ to $-\overline{\tau}$ and $-\overline{\sigma}$. Using \eqref{eq:Gamma_a_b_as_prod_over_F} and the identity $\Gamma(-\overline{z},-\overline{\tau},-\overline{\sigma})=\overline{\Gamma(z,\tau,\sigma)}$, we find that
$$
\mathbf{\Gamma}_{\mathfrak{a},h}(L,\overline{\sigma}_\C)= \overline{\mathbf{\Gamma}_{\mathfrak{a},h}(L,\sigma_\C)}.
$$
\end{enumerate}

\subsection{The main conjecture} \label{S:conj}

Let $K (\mathfrak{f})$ be the narrow ray class field of $K$ of conductor $\mathfrak{f}$. It is a totally complex abelian extension of $K$ that is maximal among abelian extensions ramified only at $\mathfrak{f}$. In general there is no known easy way to construct it. Elliptic units, which are obtained by evaluating modular units at quadratic imaginary arguments of $\mathcal{H}$, allow for the analytic construction of abelian extensions of imaginary quadratic fields. The following conjecture aims to transpose the theory of elliptic units to the context of complex cubic fields. 

Denote by $I (\mathfrak{f})$ the group of fractional ideals of $K$ generated by ideals coprime to $\mathfrak{f}$, and by $P(\mathfrak{f})$ and $P(\mathfrak{f})_+$ the subgroups of principal fractional ideals $(\alpha)$ generated by elements $\alpha \in K^\times$ such that $v_\mathfrak{p}(\alpha-1) \geq v_\mathfrak{p}(\mathfrak{f})$ for every prime $\mathfrak{p}$ dividing $\mathfrak{f}$, requiring furthermore $\sigma_\R(\alpha) >0$ for $(\alpha)$ in $P(\mathfrak{f})_+$. 
 
According to class field theory, the narrow ray class group
$$\mathrm{Cl}_K^+ (\mathfrak{f}) = I (\mathfrak{f}) / P(\mathfrak{f})_+$$
is isomorphic to the Galois group $\mathrm{Gal} (K (\mathfrak{f}) /K)$ via the Artin map. We denote by $\sigma_\mathfrak{c} = (\mathfrak{c} , K (\mathfrak{f}) /K)$ in $\mathrm{Gal} (K (\mathfrak{f}) /K)$ the element that this map assigns to an ideal $\mathfrak{c}$ in $I (\mathfrak{f})$, normalised so that for a prime ideal $\mathfrak{p}$ we have $\sigma_\mathfrak{p}(x) \equiv x^{\mathrm{N}(\mathfrak{p})} \mod \mathfrak{P}$ when $\mathfrak{p}=\mathfrak{P} \cap \mathcal{O}_K$.

\medskip
\noindent
{\bf Conjecture.} {\it Let $\mathfrak{f}$ be an ideal of $\mathcal{O}_K$ such that $K(\mathfrak{f})$ is totally complex and fix a complex embedding $K(\mathfrak{f}) \hookrightarrow \C$ extending the fixed embedding $K \hookrightarrow \C$. Let $\mathfrak{b}$ be an ideal of $\mathcal{O}_K$ with $(\mathfrak{f},\mathfrak{b})=1$ and set $L=\mathfrak{fb}^{-1}$. Suppose that $\mathfrak{a} \subset \mathcal{O}_K$ is coprime to $6 \mathrm{N}(\mathfrak{f})$
and that $\mathrm{N}(\mathfrak{a})$ is an odd prime and let $h=\lambda/q$ with $\lambda \in L$ admissible and $q$ the smallest positive integer in $\mathfrak{f} \cap \Z$.
\begin{enumerate}
\item The number $\mathbf{\Gamma}_{\mathfrak{a} , h} ( L)$ only depends on $L$ and $\mathfrak{a},$ and is the image in $\C$ of a unit $\mathbf{u}_{L, \mathfrak{a}} \in \mathcal{O}_{K (\mathfrak{f})}^\times$.
\item If $w$ is an  archimedean place of $K (\mathfrak{f})$ above the real embedding of $K$, then $|\mathbf{u}_{L, \mathfrak{a}}|_w =1$.
\item For any $\mathfrak{c} \in I(\mathfrak{f})$ we have the \emph{reciprocity law}
$$\sigma_{\mathfrak{c}} (\mathbf{u}_{L, \mathfrak{a}}) = \mathbf{u}_{\mathfrak{c}^{-1} L, \mathfrak{a}}.$$
\end{enumerate}
}
\medskip

\noindent
{\it Remarks.} 
\begin{enumerate}[leftmargin=*]
    \item By \eqref{eq:Gamma_invariance_P_+_ff}, part (1) of the conjecture implies that the number $\mathbf{\Gamma}_{\mathfrak{a},h}(\mathfrak{fb}^{-1})$ only depends on $\mathfrak{b}$ through its class in the narrow ray class group $\mathrm{Cl}^+_K(\mathfrak{f})$. This is consistent with part (3). 

\item Since $\mathrm{Gal}(K(\mathfrak{f})/K)$ acts transitively on the places of $K(\mathfrak{f})$ above the complex place of $K$, the valuation $|\mathbf{u}_{L,\mathfrak{a}}|_w$ for such places is determined by (3) and Theorem \ref{T:KLintro} below.

\item Let $\tau \in \mathrm{Gal}(K(\mathfrak{f})/K)$ correspond to the unique non-trivial element of the order two subgroup $P(\mathfrak{f})/P(\mathfrak{f})_+$ of $\mathrm{Cl}^+_K(\mathfrak{f})$. By \eqref{eq:Gamma_invariance_P_ff}, we have
$$
\tau(\mathbf{u}_{L,\mathfrak{a}}) =\mathbf{u}_{L,\mathfrak{a}}^{-1}.
$$

\item When $\mathfrak{a}$ is not coprime to $6$ we still conjecture that $\mathbf{\Gamma}_{\mathfrak{a} , h} ( L)$ is algebraic but in general we have to raise to a power to get a number that is the image in $\C$ of a unit in $K (\mathfrak{f})$. In all numerical examples we have checked, taking the 12th power suffices, and we find that the resulting unit only depends on $L$ and $\mathfrak{a}$ \emph{up to a root of unity in $K (\mathfrak{f})$}.

\end{enumerate}

\subsection{A numerical example} \label{subsection:first_numerical_example}

Stark's conjecture \cite{Stark,Tate}  --- that we discuss a bit more below --- posits a remarkable connection between $L$-function values at $s=0$ and units in number fields. 
To the authors' knowledge the first example for complex cubic fields was computed by Samit Dasgupta in his Harvard senior thesis \cite{Dasgupta}. More examples were later studied in \cite{DummitAl}. 

We consider here Dasgupta's original example. We give a larger collection of numerical examples at the end of the paper. 

Let $K= \Q (\beta)$, where $\beta \in \C$ is the root of $x^3-x^2+5x+1$ with positive imaginary part. This is the complex cubic field of smallest discriminant (in absolute value) with class number $3$. 

The discriminant of $K$ over $\Q$ is $-588=-2^2 \cdot 3 \cdot 7^2$. We have
$$(3) = \mathfrak{p}_1 \mathfrak{p}_2^2 \quad \mbox{where} \quad \mathfrak{p}_1 = (3 , \beta +1) \quad \mbox{and} \quad \mathfrak{p}_2 = (3 , \beta -1).$$
Take $\mathfrak{f}=\mathfrak{p_1}$. The corresponding narrow ray class field $K(\mathfrak{f})=K(\mathfrak{p_1} \infty_{\R})$ has relative degree $6$ over $K$. It is the compositum of $K(\sqrt{-3})$ and the Hilbert class field $H=K(\theta)$, where $\theta = \zeta_7 + \zeta_7^{-1}$ satisfies the equation $\theta^3+\theta^2-2\theta-1=0$. The Galois group $G$ of the abelian extension $K(\mathfrak{f})/K$ is cyclic of order $6$. Also $K(\mathfrak{f})$ possesses  $6$ roots of unity. 
We fix a complex embedding of $K(\mathfrak{f})$ extending that of $K.$

After many clever convoluted numerical computations, Dasgupta found a unique possible Stark unit $\mathbf{u}_{\rm Stark}$, up to a choice of root of unity, and checked it has the desired valuation at the complex places to at least $25$ decimal places. This led him to raise plainly the crucial question: 

\begin{quote}
Consider the Stark conjecture [$\ldots$] Is there a formula for the actual [complex] value of a (conjectured) Stark unit $\mathbf{u}_{\rm Stark}$? In other words, can one propose a putative formula for the \textit{argument} of a Stark unit $\mathbf{u}_{\rm Stark}$ in addition to the Stark formula for its absolute value?

Not only would answering this question make numerical confirmations easier [$\ldots$], but it would also enable one to make a reference to Hilbert's 12th problem in the complex $v$ case, as one can do in the real $v$ case.
\end{quote}

Our conjecture provides a tentative solution to this question. Let our smoothing ideal $\mathfrak{a}$ be the unique ideal with norm $5$. When $\mathfrak{b}$ varies among ideals that are coprime to both $\mathfrak{f}$ and $\mathfrak{a},$ we conjecture that $\mathbf{\Gamma}_{\mathfrak{a} , h} ( \mathfrak{f}\mathfrak{b}^{-1} )$ is the complex image of a unit $\mathbf{u}_{\mathfrak{fb}^{-1},\mathfrak{a}}$ in $K (\mathfrak{f})$ that is independent of the particular choice of $h$. Moreover, the action of $G$ on these units is (conjecturally) given by the rule $\mathbf{u}_{ \mathfrak{f}\mathfrak{b}^{-1}, \mathfrak{a}} = \sigma_{\mathfrak{b}} (\mathbf{u}_{ \mathfrak{f}, \mathfrak{a}})$.

The class of the prime ideal $\mathfrak{b} = (2, \beta -1)$, lying above $2$, generates the ray class group. Identifying the alleged units with their complex images, we compute (to $1000$ digits in a few seconds):\footnote{We numerically verify that changing the choice of $h$ does not affect the result.}
\begin{equation*}
\begin{array}{lcccc}
\mathbf{\Gamma}_{\mathfrak{a} , h} ( \mathfrak{f})^{-1} & \approx & -1.3795863226 \ldots + \mathrm{i} \cdot  2.0250077123 \ldots  & \approx & \mathbf{\Gamma}_{\mathfrak{a} , h} ( \mathfrak{f}\mathfrak{b}^{-3} )   \\
\mathbf{\Gamma}_{\mathfrak{a} , h} ( \mathfrak{f}\mathfrak{b}^{-1} )^{-1} & \approx & 1.3269203008 \ldots - \mathrm{i} \cdot  1.2639106201 \ldots & \approx & \mathbf{\Gamma}_{\mathfrak{a} , h} ( \mathfrak{f}\mathfrak{b}^{-4} )\\
\mathbf{\Gamma}_{\mathfrak{a} , h} ( \mathfrak{f}\mathfrak{b}^{-2} )^{-1}  & \approx & -4.8390562074 \ldots - \mathrm{i} \cdot  7.5167566542 \ldots & \approx  & \mathbf{\Gamma}_{\mathfrak{a} , h} ( \mathfrak{f}\mathfrak{b}^{-5} ).
\end{array}
\end{equation*}
These units coincide with the roots of 
$$x^6 + (3+3\beta ) x^5 + (2+ \beta - \beta^2) x^4 + (1+3\beta + 7 \beta^2) x^3 + (2+ \beta - \beta^2) x^2 + (3+3\beta ) x +1$$
to $1000$ decimal digits. Using  Pari-GP \cite{parigp} one can verify that this polynomial is irreducible over $K$ and has  its roots in $K(\mathfrak{f})$.
Furthermore, being monic with constant term equal to one, any root $z$ is a unit. The unique element $\tau \in G$ of order two acts on the roots by $\tau (z) = z^{-1}$. This implies that $|z|_w=1$ for every place $w$ of $K(\mathfrak{f})$ lying over the real place of $K$. The above values verify numerically our reciprocity law for the Galois action, in agreement with our conjecture.

According to Dasgupta's computation the Stark unit $\mathbf{u}_{\rm Stark}$ (if it exists) must be --- after possibly rescaling by a root of unity --- a root of
\begin{equation*}
\begin{split}
x^6 &- (\beta^2 -4 \beta +1) x^5 + (-7\beta^2 +10 \beta +5) x^4 \\
&- (22 \beta^2 + 17 \beta + 6) x^3 + (-7\beta^2 +10 \beta +5) x^2 - (\beta^2 -4 \beta +1) x +1.
\end{split}
\end{equation*}
We choose the complex embedding of $K(\mathbf{u}_{\rm Stark})$ mapping $\mathbf{u}_{\rm Stark}$ to the root whose first decimal digits are $ -0.256\ldots - \mathrm{i}  \cdot  0.077\ldots$. We then verify numerically that 
$$\mathbf{u}_{\rm Stark}^{\mathrm{N} (\mathfrak{a})-\sigma_{\mathfrak{a}}} \approx \mathbf{\Gamma}_{\mathfrak{a} , h} ( \mathfrak{f} )^{6},$$ 
where $\approx$ represents an error less than $10^{-1000}$. Note that, even though the Stark unit $\mathbf{u}_{\rm Stark}$ is only defined up to a root of unity, its power $\mathbf{u}_{\rm Stark}^{\mathrm{N} (\mathfrak{a})-\sigma_{\mathfrak{a}}}$ is uniquely defined. Since $\mathfrak{a}$ is of norm $5$, the element ${\mathrm{N} (\mathfrak{a})-\sigma_{\mathfrak{a}}} \in \Z [G]$ indeed annihilates all roots of unity of $K(\mathfrak{f})$ (see Remark below).

We could have taken another smoothing ideal, e.g. $\mathfrak{a} = (2, \beta -1)$, the prime ideal lying above $2$. Then $(\mathfrak{a} , 6) \neq 1$. For this choice of $\mathfrak{a}$, we can numerically compute $\mathbf{\Gamma}_{\mathfrak{a} , h} (  \mathfrak{f})$. The \texttt{algdep} command of Pari-GP allows to show that this complex number is still very close to the image of an algebraic unit but that unit does not belong to $K(\mathfrak{f})$ and its value depends on the choice of $h$. In that case we need to raise this number to the power $4$ to get the image of a unit in $K(\mathfrak{f})$. This is coherent with our conjecture.

\medskip

\noindent
{\it Remark.} 
Let $K$ be a complex cubic field. Suppose that the narrow ray class field $K(\mathfrak{f})$ contains  a $p$-th root of unity for a prime $p$ coprime to $\mathfrak f$. Then the ramification degree (over $\Q$) of a prime above $p$ is at least $p-1$. The absolute ramification degree of such a prime in $K(\mathfrak{f})$ agres with the one of the prime of $K$ below it.  Because $[K:\Q]=3$, it follows that $p \leq 3$. Thus, by \cite[Lemme IV.1.1]{Tate}, when $(\mathfrak{a},6 \mathfrak{f})=1$ the element $\mathrm{N}(\mathfrak{a})-\sigma_\mathfrak{a} \in \Z[\mathrm{Gal}(K(\mathfrak{f})/K)]$ annihilates the roots of unity.

\section{A limit formula for Hecke $L$-functions of complex cubic fields}

We keep notation as in the preceding section.

\subsection{Partial zeta functions}

Fix a class $C$ in $\mathrm{Cl}_K^+ (\mathfrak{f})$ and let $\sigma$ be the corresponding element in  $\mathrm{Gal} (K(\mathfrak{f})/K)$. Associated to these data is the partial zeta function
\begin{equation}
\zeta_{\mathfrak{f}} (C, s) = \zeta_{\mathfrak{f}} (\sigma , s) = \sum_{\mathfrak{c} \in C} \mathrm{N} (\mathfrak{c})^{-s}, \quad \mathrm{Re}(s) >1,
\end{equation}
where the sum runs over all integral ideals $\mathfrak{c}$ in the ideal class $C$. Writing $C=[\mathfrak{b}]$ for an integral ideal $\mathfrak{b}$ of $\mathcal{O}_K$ and $L=\mathfrak{fb}^{-1}$, we have
$$
C = \{ \mathfrak{b} \mu \; : \; \mu \in (1+L)^+/ \mathcal{O}_\mathfrak{f}^{+ , \times} \},
$$
where $(1 + L)^+$ are the totally positive elements in $1+L$. 
The associated partial zeta function is therefore given for $\mathrm{Re}(s)>1$ by
\begin{equation*}
\zeta_{\mathfrak{f}} (\mathfrak{b} , s) = \mathrm{N} (\mathfrak{b})^{-s}  \underbrace{\sum_{\mu \in (1+L)^+/ \mathcal{O}_\mathfrak{f}^{+ , \times}} \mathrm{N} (\mu )^{-s}.}_{ \zeta (1_K , L , s)}
\end{equation*}
We have the identity 
\begin{equation} \label{E:partialsum}
\zeta (1_K , L , s) = \frac{1}{2} \left( \sum_{\mu \in  (1 + L) / \mathcal{O}_\mathfrak{f}^{+ , \times}} \frac{1}{|\mathrm{N} (\mu )|^{s}} + \sum_{\mu \in  (1 + L) / \mathcal{O}_\mathfrak{f}^{+ , \times}} \frac{\mathrm{sgn}(\mu_{\R})}{|\mathrm{N} (\mu )|^{s}} \right).
\end{equation}
This sum can be expressed as a linear combination of Hecke $L$-functions.

\subsubsection*{Hecke $L$-functions}

Let $\chi$ be a character of $\mathrm{Cl}_K^+ (\mathfrak{f})$.  The restriction of $\chi$ to $P(\mathfrak{f}) / P(\mathfrak{f})_+ \to \C^\times$ is of order $1$ or $2$. We say that $\chi$ is \emph{even} in the first case, and \emph{odd} otherwise. For $\mathrm{Re}(s) >1$, we define an $L$-function
\begin{equation*}
L_\mathfrak{f} (\chi ,s ) = \sum_{\substack{\mathfrak{a} \subset \mathcal{O}_K \\ (\mathfrak{a} , \mathfrak{f}) =1}} \frac{\chi (\mathfrak{a} )}{\mathrm{N} (\mathfrak{a})^s} = \prod_{(\mathfrak{p} , \mathfrak{f})=1} \left( 1 - \frac{\chi (\sigma_\mathfrak{p} )}{\mathrm{N} (\mathfrak{p})^s} \right)^{-1}.
\end{equation*}
It admits meromorphic continuation and satisfies a functional equation with $\Gamma$-factors 
$$\Gamma (\chi ,s ) = \left\{ \begin{array}{ll}
\Gamma_\R (s) \Gamma_\C (s) & \mbox{ is } \chi \mbox{ is even}, \\
\Gamma_\R (s+1) \Gamma_\C (s) & \mbox{ is } \chi \mbox{ is odd}.
\end{array} \right.$$
The functional equation implies that for $\chi \neq \chi_0$ (the trivial character), the completed $L$-function  $\Gamma(\chi,s)L_\mathfrak{f}(\chi,s)$ is entire; in particular $L_\mathfrak{f}(\chi,s)$ has at least a double zero at $s=0$ if $\chi \neq \chi_0$ is even.  When $\chi =\chi_0$, the function 
$$L_\mathfrak{f} (\chi_0 , s ) = \zeta_{K , \mathfrak{f}} (s) = \zeta_K (s) \prod_{\mathfrak{p} | \mathfrak{f}} (1- \mathrm{N}(\mathfrak{p})^{-s})$$
has analytic continuation to all of $\C$ with exactly one simple pole at $s=1$ and, since $\mathfrak{f} \neq \mathcal{O}_K$, it again has a zero of order $\geq 2$ at $s=0$. 
It follows that $L_\mathfrak{f} ' (\chi , 0) = 0$ if $\chi$ is even. 

\subsubsection*{Meromorphic continuation of partial $L$-functions}

The vanishing of $L'_\mathfrak{f}(\chi,0)$ for $\chi$ even gives the following expression for partial zeta functions.

\begin{lemma} \label{lemma:derivative_zeta_sign}
We have
\begin{equation*} 
\zeta' (1_K , L , 0) = \frac12 \frac{d}{ds} \left( \sum_{\mu \in (1 + L) / \mathcal{O}_\mathfrak{f}^{+ , \times}} \frac{\mathrm{sign}(\mu_{\R})}{|\mathrm{N} (\mu )|^{s}} \right)_{s=0} .
\end{equation*}
\end{lemma}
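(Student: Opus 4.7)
The plan is to apply \eqref{E:partialsum}, which writes $\zeta(1_K,L,s)=\tfrac12(f_1(s)+f_2(s))$ with $f_1(s)=\sum_{\mu}|\mathrm{N}(\mu)|^{-s}$ the unsigned sum and $f_2(s)=\sum_\mu\mathrm{sgn}(\mu_\R)|\mathrm{N}(\mu)|^{-s}$ the signed one, and to reduce the lemma to showing $f_1'(0)=0$.

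First I would identify $\mathrm{N}(\mathfrak{b})^{-s}f_1(s)$ with the partial zeta function of the \emph{non-narrow} ray class $[\mathfrak{b}] \in \mathrm{Cl}_K(\mathfrak{f}):=I(\mathfrak{f})/P(\mathfrak{f})$. The standing hypothesis $\mathcal{O}_\mathfrak{f}^{+,\times}=\mathcal{O}_K^\times\cap(1+\mathfrak{f})$ ensures that $\mu\mapsto(\mu)\mathfrak{b}$ induces a bijection from $(1+L)/\mathcal{O}_\mathfrak{f}^{+,\times}$ (taken without any positivity restriction) onto the integral ideals lying in this non-narrow class, and since $|\mathrm{N}(\mu)|\mathrm{N}(\mathfrak{b})=\mathrm{N}((\mu)\mathfrak{b})$ we obtain
$$
\mathrm{N}(\mathfrak{b})^{-s}f_1(s)=\sum_{[\mathfrak{c}]=[\mathfrak{b}]\text{ in }\mathrm{Cl}_K(\mathfrak{f})}\mathrm{N}(\mathfrak{c})^{-s}.
$$

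Next I would expand by orthogonality of characters of $\mathrm{Cl}_K(\mathfrak{f})$. These characters are exactly the even characters of $\mathrm{Cl}_K^+(\mathfrak{f})$, so
$$
\mathrm{N}(\mathfrak{b})^{-s}f_1(s)=\frac{1}{|\mathrm{Cl}_K(\mathfrak{f})|}\sum_{\chi\text{ even}}\bar\chi([\mathfrak{b}])\,L_\mathfrak{f}(\chi,s).
$$
By the discussion immediately preceding the lemma, every $L_\mathfrak{f}(\chi,s)$ with $\chi$ even vanishes to order at least two at $s=0$ (from the $\Gamma$-factor $\Gamma_\R(s)\Gamma_\C(s)$ in the functional equation for $\chi\ne\chi_0$, and from the stated property of $\zeta_{K,\mathfrak{f}}$ for $\chi=\chi_0$). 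Hence $\mathrm{N}(\mathfrak{b})^{-s}f_1(s)$, and so also $f_1(s)$ itself (obtained by multiplying by the entire non-vanishing factor $\mathrm{N}(\mathfrak{b})^s$), vanishes to order $\geq 2$ at $s=0$. In particular $f_1'(0)=0$, and differentiating \eqref{E:partialsum} at $s=0$ yields the claim.

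There is essentially no obstacle here: once the unsigned sum is recognised as a non-narrow partial zeta function, the desired vanishing is a direct consequence of the order-two zeros of the even Hecke $L$-functions recalled just before the lemma.
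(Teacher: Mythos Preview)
Your argument is correct and follows essentially the same route as the paper: identify the unsigned sum $f_1(s)$ with a partial zeta function for the (non-narrow) ray class group $\mathrm{Cl}_K(\mathfrak{f})$, expand it as a linear combination of $L_\mathfrak{f}(\chi,s)$ over even characters $\chi$, and invoke the vanishing of $L'_\mathfrak{f}(\chi,0)$ for even $\chi$ established just before the lemma. The paper phrases the first step slightly differently---it passes from $(1+L)/\mathcal{O}_\mathfrak{f}^{+,\times}$ to $(1+L)/\mathcal{O}_\mathfrak{f}^{\times}$ using the standing hypothesis and then writes down the character decomposition directly---but the content is the same.
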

\begin{proof} The first sum in the right hand side of \eqref{E:partialsum} is
$$
\sum_{\mu \in (1+L)/\mathcal{O}_{\mathfrak{f}}^{+,\times}} \frac{1}{|\mathrm{N}(\mu)|^s} = \sum_{\mu \in (1+L)/\mathcal{O}_{\mathfrak{f}}^{\times}} \frac{1}{|\mathrm{N}(\mu)|^s}.
$$
This is a linear combination of $L$-functions $L_\mathfrak{f} (\chi , s)$ with $\chi$ even: if $\mathfrak{a}_1 , \ldots , \mathfrak{a}_t$ are integral ideals representing the classes in $\mathrm{Cl}_K(\mathfrak{f}) = I(\mathfrak{f}) / P (\mathfrak{f})$, then
\begin{equation*}
\begin{split}
L_\mathfrak{f} (\chi , s)  = \sum_{j=1}^t \frac{\chi (\mathfrak{a}_j)}{\mathrm{N} (\mathfrak{a}_j)^s} \sum_{\mu  \in (1+ \mathfrak{f a}_j^{-1}) / \mathcal{O}_\mathfrak{f}^{\times}}  \frac{1}{|\mathrm{N} (\mu )|^{s}}
\end{split}
\end{equation*}
and hence
\begin{equation*}
\mathrm{N}(\mathfrak{a}_j)^{-s} \sum_{\mu \in (1+\mathfrak{fa}_j^{-1})/\mathcal{O}_\mathfrak{f}^\times} \frac{1}{|\mathrm{N}(\mu)|^s} = \frac{1}{t} \sum_{\chi: \mathrm{Cl}_K(\mathfrak{f}) \to \C} \chi(\mathfrak{a}_j)^{-1}L_\mathfrak{f}(\chi,s).
\end{equation*}
Since $L_\mathfrak{f} ' (\chi , 0) =0$  if $\chi$ is even, the first sum in the right hand side of \eqref{E:partialsum} does not contribute to the derivative at $0$ and the lemma follows.  
\end{proof}

\subsection{A Kronecker limit formula}

Let $\mathfrak{a}$, $\mathfrak{f}$ and $\mathfrak{b}$ be ideals of $\mathcal{O}_K$ as in Section \ref{subsection:complex_cubic_fields_and_elliptic_gamma_function}. Consider the smoothed partial zeta function
\begin{equation} \label{E:smoothing}
\zeta_{\mathfrak{f} , \mathfrak{a}} (\mathfrak{b} , s) =  \zeta_{\mathfrak{f}} (\mathfrak{ab} , s) -\mathrm{N} (\mathfrak{a}) \zeta_{\mathfrak{f}} (\mathfrak{b} , s) .
\end{equation}

The next theorem is strongly reminiscent of the limit formulas expressing the derivative at $s=0$ of Hecke $L$-functions of imaginary quadratic fields in terms of logarithms of absolute values of elliptic units.

\begin{theorem}  \label{T:KLintro}
The modulus of $\mathbf{\Gamma}_{\mathfrak{a}, h} ( \mathfrak{fb}^{-1})$ is independent of $h$ and 
$$\zeta_{\mathfrak{f} , \mathfrak{a}} ' (\mathfrak{b} , 0) = \log \left| \mathbf{\Gamma}_{\mathfrak{a}, h}( \mathfrak{fb}^{-1}) \right|^2 .$$
\end{theorem}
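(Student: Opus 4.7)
The strategy --- anticipated in the introduction and modelled on \cite{BCG_CRM,ColmezNous} --- is to identify both sides of the claimed equality with two different periods of a single Eisenstein series on a fiber bundle over a locally symmetric space for a congruence subgroup of $\mathrm{SL}_3(\Z)$, with fiber the period domain $\mathbf{X} = \C\mathrm{P}^2 - \R\mathrm{P}^2$. First I would use Lemma \ref{lemma:derivative_zeta_sign} together with the $\mathfrak{a}$-smoothing \eqref{E:smoothing} to rewrite the left-hand side as
$$
\zeta_{\mathfrak{f},\mathfrak{a}}'(\mathfrak{b},0) = \tfrac12 \tfrac{d}{ds}\Big|_{s=0} \Big( \sum_{\mu \in (1+\mathfrak{a}^{-1}L)/\mathcal{O}_\mathfrak{f}^{+,\times}} \tfrac{\mathrm{sgn}(\mu_\R)}{|\mathrm{N}(\mu)|^s} - \mathrm{N}(\mathfrak{a}) \sum_{\mu \in (1+L)/\mathcal{O}_\mathfrak{f}^{+,\times}} \tfrac{\mathrm{sgn}(\mu_\R)}{|\mathrm{N}(\mu)|^s} \Big),
$$
after checking that the smoothing also cancels the contribution of the $\mathrm{N}(\mathfrak{b})^{-s}$ prefactors and of the Klingen--Siegel special values $\zeta_\mathfrak{f}(\mathfrak{ab},0) - \mathrm{N}(\mathfrak{a})\zeta_\mathfrak{f}(\mathfrak{b},0)$.

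Next, following the equivariant cohomology framework of \cite{BCG_CRM}, I would attach to the data $(L,\mathfrak{a},h)$ an Eisenstein series $E(w,z;s)$ on $\C \times \mathbf{X}$ and exhibit two natural cycles on the total space of the fiber bundle. The first is a \emph{closed geodesic} $\mathcal{C}_\varepsilon$, coming from the $\varepsilon^\Z$-action on $K\otimes\R \cong \R \times \C$ and descending to the real torus $(\R^\times \times \C^\times)/\mathcal{O}_\mathfrak{f}^{+,\times}$ together with its sign character; Mellin unfolding of the $\mathcal{C}_\varepsilon$-period identifies it with the sign-twisted Dirichlet sum above, so that its value at $s=0$ is $\zeta_{\mathfrak{f},\mathfrak{a}}'(\mathfrak{b},0)$. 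The second is a \emph{modular symbol} $\mathcal{M}_{a,\varepsilon a}$ of a new kind, supported on the cones $C_{\texttt{+-}}(a,\varepsilon a)$ and $C_{\texttt{-+}}(a,\varepsilon a)$ with $\mathbf{x}$ as its base point in the fiber. Evaluating the $\mathcal{M}_{a,\varepsilon a}$-period at $(w,z) = (h(\mathbf{x}),\mathbf{x})$, unfolding against $C_{\texttt{+-}}/\Z\gamma$ and $C_{\texttt{-+}}/\Z\gamma$, and applying the Fourier expansion $\log|1-e^{2\pi i u}| = -\mathrm{Re}\sum_{n\geq 1} e^{2\pi i n u}/n$ to each factor in \eqref{felder-v-def} and \eqref{eq:def_smoothed_ell_gamma_function}, one should recognise this second period as $\log|\mathbf{\Gamma}_{\mathfrak{a},h}(\mathfrak{fb}^{-1})|^2$.

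Finally, a Stokes-type argument in the spirit of \cite{ColmezNous} shows that the two cycles $\mathcal{C}_\varepsilon$ and $\mathcal{M}_{a,\varepsilon a}$ differ by a boundary on which $E(\cdot,\cdot,0)$ is exact, so their periods coincide, yielding the theorem. The $h$-independence of $|\mathbf{\Gamma}_{\mathfrak{a},h}(\mathfrak{fb}^{-1})|$ drops out for free since the geodesic period manifestly does not involve $h$. The main obstacle will be the modular-symbol computation: one must carefully match the combinatorial fundamental domain for $\Z\gamma$ acting on $C_{\texttt{+-}}(a,\varepsilon a)$ and $C_{\texttt{-+}}(a,\varepsilon a)$ with the indexing sets of \eqref{felder-v-def}, regularise the tails of the Dirichlet and Fourier sums uniformly in $s$ near $s=0$, and verify that the $\mathfrak{a}$-smoothing of the Eisenstein series produces exactly the quotient structure of \eqref{eq:def_smoothed_ell_gamma_function}.
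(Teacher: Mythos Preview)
Your plan is essentially the paper's own strategy: an Eisenstein $1$-form whose period along the closed geodesic $\langle\varepsilon\rangle$ gives $\zeta'_{\mathfrak{f},\mathfrak{a}}(\mathfrak{b},0)$ (via Lemma~\ref{lemma:derivative_zeta_sign} and a Mellin computation), and whose period along the modular symbol $[b,a]=[\,\varepsilon a,a\,]$ gives $\log|\mathbf{\Gamma}_{\mathfrak{a},h}(L)|^2$, with $h$-independence falling out because the geodesic side does not see $h$. Two points of your sketch deserve sharpening. First, the Eisenstein series is a $1$-form on $\mathbf{S}\times\mathbf{X}$ (with $\mathbf{S}$ the symmetric space of $\GL_3(\R)^+$), not on $\C\times\mathbf{X}$; both cycles live in the $\overline{\mathbf{S}}$-factor (over the fixed fiber point $\langle\mathbf{x}\rangle$), and the comparison is not literally Stokes but a limiting argument: the segment $[t_1\varepsilon_\R,t_1]$ on the ray $\Delta_{\langle a\rangle,x}$ approaches the closed geodesic as $t_1\to\infty$, while the $\mathfrak{a}$-smoothing makes the Eisenstein form rapidly decreasing toward the cusp $s_{\langle a\rangle}$ so one can push $t_1\to 0$ on the other end. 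This requires genuine analytic estimates (Poisson summation, bounds of the type $|H_{a,t_1}-H_{\varepsilon_\R b,t_1}|\ll t_1^{-3/2}H_{a,t_1}$) that go beyond ``$E$ is exact on the boundary.''

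Second, and more substantively, the modular-symbol computation does \emph{not} land directly on $\log|\mathbf{\Gamma}_{\mathfrak{a},h}(L)|^2$: the half-open cones $C_{\texttt{+-}},C_{\texttt{-+}}$ in the definition of $\Gamma_{a,b}$ do not match the support of the sign function $\tfrac12(\mathrm{sgn}\,a(w)-\mathrm{sgn}\,b(w))$ coming from the Gaussian integrals, and the discrepancy produces a boundary term supported on $H(a)\cup H(b)$. This boundary term is $\log\bigl|\theta_0^{(N)}(z_0,\sigma)/\theta_0^{(N)}(z_0,-\tau)\bigr|$ with $z_0=h(\mathbf{x})/\gamma(\mathbf{x})$, and its vanishing is not automatic: one must use that $\varepsilon$ carries $L\cap H(a)$ to $L\cap H(b)$, hence relates $\sigma$ and $-\tau$ by an element of $\Gamma_0(N)$, and then invoke the Kato-type uniqueness for the Siegel unit $\theta_0^{(N)}$ to conclude that the ratio is a cube root of unity (trivial when $(\mathrm{N}(\mathfrak{a}),6)=1$). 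You should flag this step explicitly, as it is where the admissibility conditions on $\lambda$ and the hypothesis on $\mathfrak{a}$ are actually used.
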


\medskip
\noindent
{\it Remark.} Assuming the conjecture holds and denoting by $v$ the complex embedding of $K(\mathfrak{f})$ fixed in the statement of the conjecture, we have\footnote{Since $v$ is complex, the valuation $|\cdot |_v$ is the square of the modulus of the image by the complex embedding associated to $v$.} 
\begin{equation*} 
\zeta_{\mathfrak{f} , \mathfrak{a}} ' (\mathfrak{b} , 0) =   \log | \mathbf{u}_{L,\mathfrak{a}}|_v.
\end{equation*}

\medskip

\noindent 
{\it Example.} In \cite{Dasgupta} Dasgupta computes the special values of the $6$ partial zeta functions $\zeta_{\mathfrak{f}} (\mathfrak{b}^k , s)$ ($k \in \{0 , \ldots , 5\}$) when  
$$K=\Q (\beta) \quad \mbox{s.t.} \quad \beta^3 - \beta^2 +5 \beta +1 =0, \quad \mathfrak{f} = (3, \beta +1) \quad \mbox{and} \quad \mathfrak{b} = (2 , \beta -1).$$
His table reads  \cite[p. 43]{Dasgupta}:
$$
\begin{array}{lclcl}
\zeta_{\mathfrak{f}} ' (\mathfrak{b}^0 , 0) & = & -\zeta_{\mathfrak{f}} ' (\mathfrak{b}^3 , 0) & \approx & 0.4384785858524408926911022\ldots,\\
\zeta_{\mathfrak{f} } ' (\mathfrak{b} , 0) & = & -\zeta_{\mathfrak{f} , } ' (\mathfrak{b}^4 , 0) & \approx & 0.3999812299583346413364528\ldots,\\
\zeta_{\mathfrak{f} } ' (\mathfrak{b}^2 , 0) & = & -\zeta_{\mathfrak{f} } ' (\mathfrak{b}^5 , 0) & \approx & 0.7885047438188618467230391\ldots,
\end{array}
$$
and we confirm using the numerical values in \ref{subsection:first_numerical_example} that, for each value of $k, $
$$\log |\mathbf{\Gamma}_{\mathfrak{a},h}(\mathfrak{fb}^{-k})|^2 \approx \zeta_{\mathfrak{f} } ' (\mathfrak{b}^{k+1} , 0) - 5 \zeta_{\mathfrak{f} } ' (\mathfrak{b}^k , 0)   ,$$
where $\approx$ represents an error less than $10^{-25}$. Since $\mathfrak{a}$ and $\mathfrak{b}$ belong to the same class in the ray class group, this formula agrees with Theorem \ref{T:KLintro}.

\subsection{Relation with Stark's conjecture} \label{section:relation_with_Starks_conjecture}

The following is a particular case of Stark's celebrated conjectures \cite{Stark,Tate} that generalize and refine the Dirichlet class number formula. Let $e_{\mathfrak{f}}$ be the number of roots of unity in $K(\mathfrak{f})$ and fix a complex place $v$ of $K(\mathfrak{f})$ above the complex place of $K$.

\medskip
\noindent
{\bf Stark's conjecture.} {\it There exists a unit $\mathbf{u}_{\rm Stark} \in K(\mathfrak{f})^\times$ such that
\begin{itemize}
\item if $w$ is a place of $K(\mathfrak{f})$ that does not divide $v$, then $| \mathbf{u}_{\rm Stark}|_w =1$,
\item for all $\rho \in \mathrm{Gal} (K(\mathfrak{f})/K)$, we have
\begin{equation} \label{E:StC}
\zeta_{\mathfrak{f}} ' ( \rho , 0) = -\frac{1}{e_\mathfrak{f}}  \log \left| \mathbf{u}_{\rm Stark}^{\rho} \right|_v ,  
\end{equation}
\item $K(\mathfrak{f})(\mathbf{u}_{\rm Stark}^{1/e_\mathfrak{f}} ) / K$ is an abelian extension. 
\end{itemize}
}

\medskip

The conjecture specifies the valuation of $\mathbf{u}_{\mathrm{Stark}}$ for every place of $K(\mathfrak{f})$ and so determines it up to a root of unity. One can get rid of this ambiguity using the ideal $\mathfrak{a}$: as explained in the Remark at the end of Section \ref{subsection:first_numerical_example}, if $\mathfrak{a}$ is coprime to $6 \mathfrak{f}$, then $\sigma_\mathfrak{a}  - \mathrm{N} (\mathfrak{a}) \in \Z [\mathrm{Gal}(K(\mathfrak{f})/K)]$ kills the roots of unity in $K(\mathfrak{f})$. It follows that $\mathbf{u}_{\rm Stark}^{\sigma_\mathfrak{a} - \mathrm{N} (\mathfrak{a})}$ is a well-defined unit in $K(\mathfrak{f})$, satisfying 
\begin{equation*} 
\zeta_{\mathfrak{f} , \mathfrak{a}} ' ( \mathfrak{b} , 0) = -\frac{1}{e_\mathfrak{f}}  \log \left| \mathbf{u}_{\rm Stark}^{(\sigma_\mathfrak{a} - \mathrm{N} (\mathfrak{a}) ) \sigma_\mathfrak{b} } \right|_v .
\end{equation*}

Theorem \ref{T:KLintro} relates our conjecture to that of Stark. We remark that his conjecture predicts that $\mathbf{u}_{\mathrm{Stark}}^{\mathrm{N}(\mathfrak{a})-\sigma_\mathfrak{a}}$ is an $e_\mathfrak{f}$-power in $K(\mathfrak{f})$ \cite[Section 4, Lemma 6]{Stark}. Assuming both conjectures hold we have
\begin{equation}
\mathbf{u}_{\rm Stark}^{\mathrm{N}(\mathfrak{a})-\sigma_{\mathfrak{a}}} = \mathbf{u}_{\mathfrak{f}, \mathfrak{a}}^{e_{\mathfrak{f}}}.
\end{equation}

Equation \eqref{E:StC} gives a formula for the modulus of special units in $K(\mathfrak{f})$ in terms of partial zeta functions. To quote Tate \cite[p. 95]{Tate}:
\begin{quote}
Trouver d'une telle façon des générateurs des corps de classes est la forme vague du 12ème problème de Hilbert, et la conjecture de Stark représente une importante contribution à ce problème. À vrai dire, c'est une contribution totalement inattendue~: Hilbert voulait qu'on trouve et discute des fonctions jouant [$\ldots$] le même rôle que la fonction exponentielle pour $\Q$ et que les fonctions modulaires elliptiques pour les corps quadratiques imaginaires.  Par contre, l'énoncé de Stark, utilisant directement les fonctions $L$, passe à côté de ces fonctions transcendantes inconnues attendues par Hilbert. Peut-être la connaissance de ces dernières sera-t-elle nécessaire pour démontrer la conjecture de Stark ?
\end{quote}

We believe that, for complex cubic fields, the elliptic gamma function is the ``fonction transcendante attendue par Hilbert''.

\section[Values of zeta functions]{Derivatives of partial zeta functions at $s=0$}

In this Section we prove Theorem \ref{T:KLintro}. Before that we briefly consider the case of  $\SL_2 (\Z),$ where the corresponding statement relies on the classical  Kronecker limit formula.
 It is not central for our purpose but it may help to understand our relevant case  $\SL_3 (\Z)$. 

\subsection{The case $\SL_2 (\Z)$} Let $\mathcal{L} \to \mathcal{H}$ be the Hodge line bundle on $\mathcal{H}$. Then $\mathcal{L}$ carries a canonical hermitian metric and we have an isomorphism of metrized bundles $\mathcal{L} \cong \mathcal{H} \times \C$,  where the latter is equipped with the metric $y^{-1} | \cdot |^2$ on the fiber over $\tau=x+iy \in \mathcal{H}$. In other words, in this case the basic geometric setup is the Hodge line bundle $(\mathcal{L},h) \to \mathcal{H}$, a hermitian holomorphic line bundle over the complex manifold $\mathcal{H}$. In this setting, Bismut-Gillet-Soul\'e \cite[Theorem 1.15]{BGS} have refined the canonical transgression form defined by Mathai and Quillen \cite{MathaiQuillen}  to a canonical $\partial \overline{\partial}$-transgression form $\psi$. Namely, there is a smooth function
\begin{equation*}
\psi \in A^0(\mathcal{L}),
\end{equation*}
rapidly decreasing along the fibers of $\mathcal{L}$, such that
\begin{equation*}
\partial \overline{\partial} \psi(t \mu ) = t \frac{d}{dt} \varphi(t \mu ),
\end{equation*}
where $\varphi$ is a constant multiple of the Mathai--Quillen Thom form.
We have
\begin{equation*}
\psi = e^{-\frac{|z|^2}{2y}},
\end{equation*}
and  one  is led to introduce, 
for any given $t>0$, the theta series 
\begin{equation*}
\theta_{\psi_t}(z,\tau) = \sum_{\omega \in   \Z + \Z \tau} e^{-t^2 \frac{|z+\omega|^2}{2y}}.
\end{equation*}
Its Mellin transform is the Eisenstein series 
\begin{equation}\label{eq1Epsi}
\begin{split}
E_\psi (z,\tau ; s) & = \int_0^\infty \theta_{\psi_t} (z,\tau) t^{2s} \frac{dt}{t} \\
& =\Gamma(s) 2^{s-1} y^s \sum_{\omega \in \Z + \Z \tau} \frac{1}{|z+ \omega|^{2s}},
\end{split}
\end{equation}
which possesses analytic continuation to $s\in \C$ and satisfies a functional equation, see \cite[Eqn. (32) p. 80]{Weil}.
Letting 
$$E_\psi = E_\psi (z, \tau  ;0)\in A^0 (\mathcal{L}-\{0\})^{\Z^2}$$ 
gives a way to make sense of the sum of the (non absolutely convergent) series 
$$
\frac{y}{2\pi} \sideset{}{'}\sum_{m,n \in \Z} \frac{e^{2i\pi (mv-nu)}}{|m\tau +n|^2},
$$
where $z= u\tau+v$. It follows from Kronecker's second limit formula \cite[Eqn. (39) p. 28]{Siegel} or \cite[\S 20.5]{LangElliptic}\footnote{A factor $(1-q_z)$ is missing in the final form in this reference.} that 
\begin{equation*} 
E_\psi  = - \log \left| \theta_0(z, \tau)e^{i\pi \tau B_2(u)}\right|^2, \quad \mbox{where} \quad B_2(u) = u^2-u+ \frac{1}{6}.
\end{equation*}
In this way the evaluation of $E_\psi$ at a CM-point is equal to $-\log | \cdot |$ of the evaluation of a holomorphic function.

\medskip

To phrase more  precisely the relation with partial zeta functions and the arithmetic of their special values in this case, let $F\subset \C$ be an imaginary quadratic field, and  let 
$ \mathfrak f$ and $\mathfrak a$ be proper coprime integral ideals of $F$. We assume that $\mathfrak a$ is a prime ideal of  norm $N=\mathrm{N}(\mathfrak a)$ and denote by $q$ the smallest positive integer in $\mathfrak{f} \cap \Z$ and by $w$ the number of roots of unity in $1+\mathfrak{f}$. Then there exists a quadratic irrational $\tau \in \mathcal{H} \cap F$ such that
\begin{equation}
\zeta'_{\mathfrak f,\mathfrak a}(\mathcal O_F, 0)= \zeta_{\mathfrak f} ' (\mathfrak a,0)-\mathrm{N}(\mathfrak a)\zeta_{\mathfrak f} ' (\mathcal O_F,0) = \frac{1}{w} \log \left| 
\frac{\theta_0( 1/q , \tau )^{N}}{\theta_0(N/q, N\tau)}\right|^{2}.  
\end{equation}
This identity implies Stark's conjecture for the imaginary quadratic field $F$: Complex Multiplication theory implies that 
if $\mathfrak f$ is not a prime power, then the complex number
$\theta_0( 1/q , \tau )^{N} / \theta_0(N/q, N\tau)$ is a unit in the ray class field $F(\mathfrak f).$
If $\mathfrak f=\mathfrak p^m,$ it is a unit outside $\mathfrak  p,$ see \cite[Prop. 2.4, p. 51]{deShalit}.

\subsection{The case $\SL_3 (\Z)$} In this case the object replacing the classical series \eqref{eq1Epsi} is an Eisenstein series $E_{\psi , \mathfrak{a}}$ that defines a $1$-form on the product of the symmetric space of $\SL_3 (\R)$ and $\mathbf{X}=\C\mathrm{P}^2-\R\mathrm{P}^2$. The strategy of proof for Theorem \ref{T:KLintro} can be summarised as
\begin{equation} \label{eq:main_strategy}
\begin{split}\frac{\sqrt{\pi}}{3} \zeta_{\mathfrak{f,a}} ' (\mathfrak{b} , 0) &= \int_{\langle \varepsilon \rangle} E_{\psi , \mathfrak{a}} (h , \mathfrak{fb}^{-1} , 0) \\
&= \int_{[b,  a]}  E_{\psi , \mathfrak{a}} (h, \mathfrak{fb}^{-1} , 0) \\
&=  \frac{\sqrt{\pi}}{3} \log \left| \mathbf{\Gamma}_{\mathfrak{a} , h} (\mathfrak{fb}^{-1}) \right|^2.
\end{split}
\end{equation}
That is, we will introduce two integration chains in the Satake compactification $\overline{\mathbf{S}}^\mathrm{Sat}$, denoted by $\langle \varepsilon \rangle$ and $[b,a]$ respectively. The chain $\langle \varepsilon \rangle$ corresponds to a closed geodesic in  $\Gamma \backslash (\mathbf{\overline{S}} \times \mathbf{X})$ for some arithmetic group $\Gamma \subset \mathrm{SL}_3(\Z)$, and we will show that the period of $E_{\psi,\mathfrak{a}}$ along this geodesic recovers $\zeta'_{\mathfrak{f},\mathfrak{a}}(\mathfrak{b},0)$. The other chain can be thought of as a generalised modular symbol. It is more precisely a limit of $1$-chains $[b, a]_{\leq t}$ as $t \to \infty$. The main step in the proof is the middle equality in \eqref{eq:main_strategy} showing that both periods of $E_{\psi,\mathfrak{a}}$ agree; the argument is geometric and is sketched in Figure \ref{fig:figure1}.

\begin{figure}
\begin{center}
\includegraphics[width=0.66\textwidth]{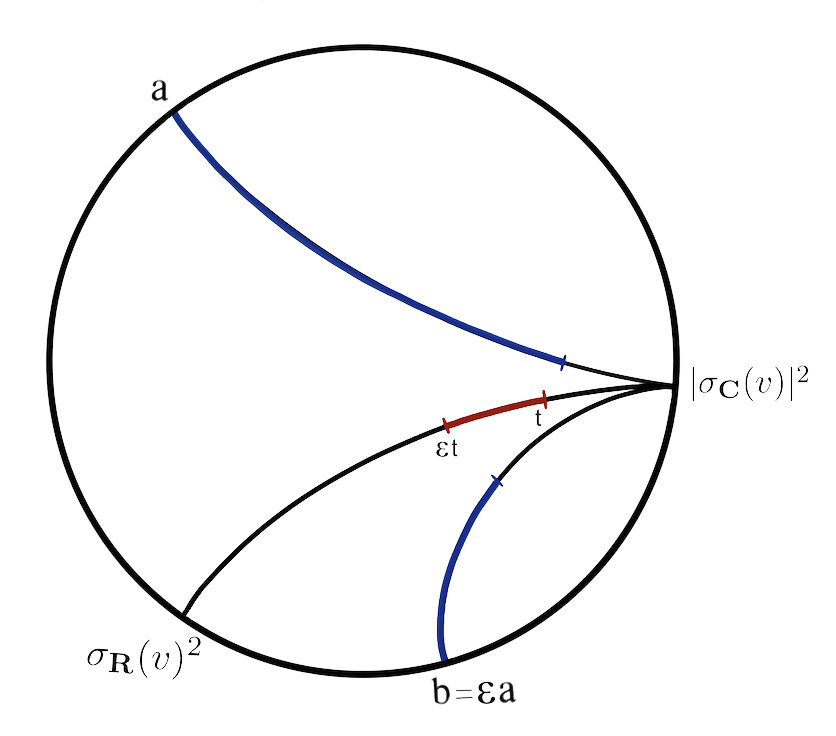}
\end{center}
\caption{Some geodesics in the Satake compactification  $\overline{\mathbf{S}}^{\mathrm{Sat}}$, with endpoints corresponding to degenerate quadratic forms on $K_\R$. The red segment is a fundamental domain for the action of $\varepsilon$ on the geodesic joining $\sigma_\R(v)^2$ to $|\sigma_\C(v)|^2$. The other two geodesics join $a(v)^2$ and $b(v)^2$ to $|\sigma_\C(v)|^2$; together they make up the generalised modular symbol $[b, a]$. The unit $\varepsilon$ maps one of the blue segments to the other. The integral of $E_{\psi, \mathfrak{a}}$ on the red segment equals $\sqrt{\pi} \zeta'_{\mathfrak{f},\mathfrak{a}}(\mathfrak{b},0)/3$. Thus it is independent of $t$, and letting $t \to \infty$ allows to integrate over $[b, a]_{\leq t}$ instead.}\label{fig:figure1}
\end{figure}

\medskip

We keep the notation from the previous sections; we fix a lattice $L=\mathfrak{fb}^{-1}$ and write $\Lambda = \mathrm{Hom}(L,\Z)$. For convenience we fix a basis of $L$ providing an isomorphism $L \simeq \Z^3$. 
For ${\bf K}=\Q, \R, \C$ we define $V_{\bf K}=\Lambda \otimes {\bf K}$. Recall also that we have fixed a complex embedding $\sigma_\C:K \hookrightarrow \C$ and an orientation of $K$ that is compatible with $\sigma_\C$ in the sense that the map $(\sigma_\R, \sigma_\C):K \otimes \R \to \R \times \C$ carries our orientation to the standard orientation on $\R \times \C$. Finally, we fix an ideal $\mathfrak{a}$ and an element $h \in L \otimes \Q$ satisfying the conditions of Section \ref{subsection:complex_cubic_fields_and_elliptic_gamma_function}; in particular, $h=\lambda/q$ with $\lambda$ admissible for $L$. We write $a$ for the element provided by Lemma \ref{L:L22} and set $b=\varepsilon a$.

\subsubsection{The Eisenstein series} 
Let
\begin{equation}
\X = (V_\C - \C \cdot V_\R)/\C^\times = \C \mathrm{P}^{2} - \R \mathrm{P}^{2}
\end{equation}
be the space of complex lines $l \subset V_\C$ such that $\overline{l} \neq l$. The group $G:=\mathrm{GL}(V_\R)^+ \simeq \mathrm{GL}_3(\R)^+$ acts naturally on $\X$. Denote by $Q$  the quadratic form on $V^\vee_\R$ defined by
\begin{equation} \label{eq:standard_quadratic_form}
Q(x) = x_1^2+ x_2^2 + x_3^2.
\end{equation}
Its isometry group $K=\SO(Q) \simeq \SO(3)$ is then a maximal compact subgroup of $G$. Let us denote by $Z^0 \simeq \R_{>0}$ the connected component of the center of $G$ and write
\begin{equation}
\bS = G/K \quad \text{and} \quad \overline{\bS} = \bS/Z^0.
\end{equation}

The map $s=gK \mapsto Q_s(\cdot) := Q(g^{-1} \cdot)$ gives a bijection between the points in $\bS$ and the positive definite quadratic forms on $V^\vee_\R$. In particular, the constant vector bundle
\begin{equation*}
\xymatrix{
\mathcal{V}^\vee := \bS \times \X \times V^\vee_\R \ar[d] \\
\bS \times \X
}
\end{equation*}
over $\bS \times \X$ with fiber $V^\vee_\R$ carries a tautological orientation and metric
$$
Q_{s}: \mathcal{V}^\vee_{s,l} \to \R_{\geq 0}, \quad (s,l) \in \bS \times \X.
$$
The group $G$ acts diagonally on $\bS \times \X$, and $\mathcal{V}^\vee$ has a $G$-equivariant structure defined by
$$
g \cdot (s,l,\mu) = (gs,gl,\mu(g^{-1} \cdot)).
$$
The metric $Q_s$ is $G$-equivariant, that is,
$$
Q_{gs}(g\mu) = Q_{s}(\mu), \qquad g \in G, \quad (s,l
) \in \bS \times \X, \quad \mu \in \mathcal{V}^\vee_{s,l}.
$$

Let us write $\mathcal{L}$ for the restriction of $\mathcal{O}(-1)$ to $\X$ and $\pi:\mathbf{S} \times \mathbf{X} \to \mathbf{X}$ for the projection. The line bundle $\pi^*\mathcal{L}^\vee$ is naturally a quotient of $\mathcal{V}^\vee$; we write $\mathcal{W} \subset \mathcal{V}^\vee$ for the kernel of quotient map. Using the metric $Q_s$, the bundle $\mathcal{V}^\vee$ decomposes as an orthogonal sum
\begin{equation} \label{eq:orthogonal_sum_bundle}
\mathcal{V}^\vee = \mathcal{W} \oplus \pi^* \mathcal{L}^\vee.
\end{equation}
Since $\mathcal{V}^\vee$ and $\pi^* \mathcal{L}^\vee$ are naturally oriented ($\mathcal{L}$ being a complex line bundle), this decomposition induces an orientation on $\mathcal{W}$. Let us denote by $e_\mathcal{W}$ the positive generator of the oriented real line bundle $\mathcal{W}$ with $|e_\mathcal{W}|_s=1$ for all $s$. Given a vector $v \in V^\vee_\R$ we obtain a function $(v,e_\mathcal{W}) \in \mathcal{C}^\infty (\mathbf{S} \times \mathbf{X})$: its value at $(s,l) \in \mathbf{S} \times \mathbf{X}$ is $(v,e_\mathcal{W}(l))_s$, where $(\cdot, \cdot)_s$ denotes the inner product associated with $s$ and $e_\mathcal{W}(l)$ is the positive generator of the line $\mathrm{Ann}(l) \subset V_\R^\vee$ satisfying $|e_\mathcal{W}(l)|_s=1$.

\begin{definition}
For $v \in V^\vee_\R$, let
$$
\psi(v) = -e^{-Q_s(v)} d(v,e_\mathcal{W}) \in \Omega^1(\mathbf{S} \times \mathbf{X}).
$$
As in the $\SL_2 (\R)$ case, the vector $v$ defines a section of $\mathcal{V}^\vee$, and $\psi(v)$ is a multiple of the pullback to $\mathbf{S} \times \mathbf{X}$ (via this section) of a double transgression of the canonical Thom form of $\mathcal{V}^\vee$ \cite{MathaiQuillen}.

For $t>0$, define
\begin{equation}
\Theta_{\psi_t}(h,L)=\sum_{\lambda \in h + L} \psi(t \lambda) \in \Omega^1(\mathbf{S} \times \mathbf{X})
\end{equation}
and set
\begin{equation}
E_\psi(h,L,s) = \int_0^\infty \Theta_{\psi_t}(h,L) t^{3s} \frac{dt}{t}.
\end{equation}
\end{definition}
The integral converges for any $s \in \C$; this follows from the standard argument splitting the integration domain into the intervals $(0,1)$ and $(1,\infty)$ and applying Poisson summation (note that $\psi$ is odd and so both $\psi$ and its Fourier transform vanish at the origin).

Note that for $g \in G$ we have
$$
g^* \psi(gv) = \psi(v).
$$
In particular, the differential forms $\Theta_\psi(h,L)$ and $E_\psi(h,L,s)$ are invariant under 
$$
\Gamma_1(h) = \{g \in \mathrm{GL}(L) \; | \; g h \equiv h \mod L \},
$$
and so we have
\begin{equation*}
\Theta_\psi(h,L) \in \Omega^1(\mathbf{S} \times \mathbf{X})^{\Gamma_1(h)} \simeq \Omega^1(\Gamma_1(h)\backslash (\mathbf{S} \times \mathbf{X}))
\end{equation*}
and
\begin{equation*}
E_\psi(h,L,0) \in \Omega^1(\overline{\mathbf{S}} \times \mathbf{X})^{\Gamma_1(h)} \simeq \Omega^1(\Gamma_1(h)\backslash (\overline{\mathbf{S}} \times \mathbf{X})).
\end{equation*}

\subsubsection{Generalized modular symbols}

A point $x=\langle \mathbf{x} \rangle \in \X$ determines a point $s_x \in \overline{\bS}^\mathrm{Sat}$ corresponding to the family of degenerate (rank two) quadratic forms obtained by rescaling the quadratic form $v \mapsto | \mathbf{x} (v) |^2$ for some non-zero generator $\mathbf{x}$ of $x$. 

The vector $a \in \Lambda$ spans a line $\langle a \rangle \subset V_\R$ that also determines a point $s_{\langle a \rangle}$ in $\overline{\bS}^\mathrm{Sat}$ corresponding to the degenerate (rank one) quadratic forms obtained by rescaling the quadratic form $v \mapsto a(v)^2$. 

The convex hull of $s_{\langle a \rangle}$ and $s_x$ is then a compact interval in $\overline{\bS}^\mathrm{Sat}$. We regard this interval as the image of a singular $1$-chain 
$$\Delta_{\langle a \rangle,x} : [0,1] \to \overline{\bS}^\mathrm{Sat}.$$ 
Its interior $\Delta_{\langle a \rangle,x}^0$ consists of the image in $\overline{\bS}$ of all quadratic forms of the form $v \mapsto t_1 a (v)^2 + t_2 |\mathbf{x} (v)|^2$. A choice of $\mathbf{x}$ gives a parametrisation
\begin{equation} \label{eq:param_gen_mod_symb_1}
\R_{>0} \xrightarrow{\sim} \Delta_{\langle a \rangle,x}^0, \quad t_1 \mapsto (v \mapsto t_1^{-2} a(v)^2 + t_1|\mathbf{x}(v) |^2) \ \ \mathrm{mod} \ Z^0.
\end{equation}
Let $x \in \X$. The \emph{generalised modular symbol} $
[a ,b]_x$ is the $1$-chain defined by
$$
[a,b]_x = \Delta_{\langle a \rangle,x}-\Delta_{\langle b \rangle,x}.
$$
The obvious properties of these chains justify our terminology (cf. \cite[Prop. 2.2]{AshRudolph}).

\subsubsection{The closed geodesic} Let us write $\langle \mathbf{x}\rangle \in \mathbf{X}$ for the complex line in $V_\C$ spanned by the complex embedding $\mathbf{x}:=\sigma_\C$. Since $h \in  \mathfrak{b}^{-1}$, we have $(\varepsilon -1)h \in \mathfrak{fb}^{-1}=L$ and so $\varepsilon \in \Gamma_1(h)$. For any $t_1 >0$, define a metric $H_{t_1} \in \mathbf{S}$ by
\begin{equation} \label{eq:closed_geodesic_metric}
H_{t_1}(v)=t_{1}^{-2} \sigma_\R (v)^2+ t_1|\sigma_{\C}(v)|^2, \quad v \in V^\vee_\R.
\end{equation}

Let us write $\overline{H}$ for the point in $\overline{\mathbf{S}}=\mathbf{S}/Z^0$ defined by a positive definite quadratic form $H \in \mathbf{S}$. Since $H_{\varepsilon_\R t_1}(\varepsilon v) = H_{t_1}(v)$, the assignment
$$
\iota: \R_{>0} \to \mathbf{S} \times \mathbf{X}, \qquad t_1 \mapsto (H_{t_1},\langle \mathbf{x} \rangle)
$$
induces a map
$$
\iota: \langle \varepsilon^n_\R \; | \; n \in \Z \rangle \backslash \R_{>0} \to \Gamma_1(h) \backslash (\overline{\mathbf{S}} \times \mathbf{X}).
$$
Its image is a closed oriented geodesic in $\Gamma_1(h) \backslash (\mathbf{S} \times \mathbf{X})$ that we denote by $\langle \varepsilon \rangle$.

\begin{lemma} \label{lemma:int_E_closed_geodesic}
$$
\int_{\langle \varepsilon \rangle} E_{\psi}(h,L,s) = \frac{1}{6}\Gamma(s) \Gamma\left( \frac{s+1}{2} \right) \sum_{v \in (1+L)/\mathcal{O}_\mathfrak{f}^{+,\times}} \frac{\mathrm{sgn}(\sigma_\R(v))}{|\mathrm{N}(v)|^s}.
$$
\end{lemma}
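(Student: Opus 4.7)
The plan is to pull back the form $E_\psi(h,L,s)$ along the parametrization $\iota: t_1 \mapsto (H_{t_1}, \langle \mathbf{x}\rangle)$ of the closed geodesic, unfold the lattice sum $\sum_{\lambda \in h+L} = \sum_{\lambda \in 1+L}$ against the cyclic group $\mathcal{O}_\mathfrak{f}^{+,\times} = \langle \varepsilon \rangle$, and then evaluate the resulting double integral in closed form. The hypothesis $\mathrm{N}(\varepsilon) = 1$ made in Section \ref{subsection:complex_cubic_fields_and_elliptic_gamma_function} is what makes the unfolding go through cleanly.

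The first step is to compute $\iota^*\psi(t\lambda)$ explicitly. The fiber $\mathcal{W}|_{\langle \mathbf{x}\rangle}$ consists of $w \in V^\vee_\R$ with $\sigma_\C(w) = 0$; imposing $|e_\mathcal{W}|_{H_{t_1}} = 1$ forces $\sigma_\R(e_\mathcal{W}) = \pm t_1$, and the orientation check (comparing the $V^\vee_\R$-orientation inherited from $\R \times \C$ with the complex orientation on $\pi^*\mathcal{L}^\vee$ identified with $\{v : \sigma_\R(v) = 0\}$ via $w \mapsto \sigma_\C(w)$) pins down the sign as $\sigma_\R(e_\mathcal{W}) = +t_1$. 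The inner product then evaluates to $(\lambda, e_\mathcal{W})_{H_{t_1}} = t_1^{-1}\sigma_\R(\lambda)$, giving
$$
\iota^*\psi(t\lambda) = t\,\sigma_\R(\lambda)\, t_1^{-2}\, e^{-t^2 H_{t_1}(\lambda)}\,dt_1, \qquad H_{t_1}(\lambda) = t_1^{-2}\sigma_\R(\lambda)^2 + t_1|\sigma_\C(\lambda)|^2.
$$

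Next, I take $[1, \varepsilon_\R^{-1}]$ as fundamental domain for the action $t_1 \mapsto \varepsilon_\R^{-1}t_1$ of $\varepsilon$ on $\R_{>0}$ and unfold. Decomposing $1+L = \bigsqcup_{\mu \in (1+L)/\langle \varepsilon\rangle} \varepsilon^\Z\mu$ and substituting $t_1 = \varepsilon_\R^n u$ in the term indexed by $\varepsilon^n\mu$, the identities $\sigma_\R(\varepsilon^n\mu) = \varepsilon_\R^n \sigma_\R(\mu)$, $H_{\varepsilon_\R^n u}(\varepsilon^n\mu) = H_u(\mu)$ and $t_1^{-2}dt_1 = \varepsilon_\R^{-n} u^{-2}du$ together cancel all powers of $\varepsilon_\R$, while the intervals $[\varepsilon_\R^{-n}, \varepsilon_\R^{-n-1}]$ tile $\R_{>0}$, producing
$$
\int_{\langle \varepsilon \rangle} E_\psi(h,L,s) = \sum_{\mu \in (1+L)/\mathcal{O}_\mathfrak{f}^{+,\times}} \sigma_\R(\mu) \int_0^\infty \int_0^\infty t^{3s-1}\,u^{-2}\,e^{-t^2 H_u(\mu)}\,dt\,du.
$$

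The remaining task is the evaluation of these integrals. The inner Gaussian integral equals $\tfrac{1}{2}\Gamma((3s+1)/2)\,H_u(\mu)^{-(3s+1)/2}$. Writing $A = \sigma_\R(\mu)^2$ and $B = |\sigma_\C(\mu)|^2$, the substitution $u = (A/B)^{1/3}v$ diagonalizes $H_u(\mu) = A^{1/3}B^{2/3}(v^{-2}+v)$ and rewrites the $u$-integral as $A^{-(s+1)/2}B^{-s}\int_0^\infty v^{3s-1}(1+v^3)^{-(3s+1)/2}dv$; after $w = v^3$ this last integral becomes a Beta integral equal to $\tfrac{1}{3}\Gamma(s)\Gamma((s+1)/2)/\Gamma((3s+1)/2)$. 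The $\Gamma((3s+1)/2)$ factor cancels, the coefficient $\sigma_\R(\mu)\,A^{-(s+1)/2}B^{-s}$ collapses to $\mathrm{sgn}(\sigma_\R(\mu))/|\mathrm{N}(\mu)|^s$, and the claimed formula drops out. The main obstacle is the geometric identification of $\iota^*\psi$, particularly the orientation argument fixing $\sigma_\R(e_\mathcal{W}) = +t_1$; everything downstream is a direct unfolding and Beta-function computation.
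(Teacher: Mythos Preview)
Your proof is correct and follows essentially the same approach as the paper: pull back $\psi$ explicitly, unfold against $\langle\varepsilon\rangle$, and evaluate the resulting double integral. There is a typo in your displayed unfolded integral: the exponent should be $t^{3s}$, not $t^{3s-1}$ (the extra factor of $t$ from $\iota^*\psi(t\lambda)$ multiplies $t^{3s}\,dt/t$ to give $t^{3s}\,dt$); your next line already uses the correct value $\tfrac12\Gamma((3s+1)/2)H_u(\mu)^{-(3s+1)/2}$, so this is purely cosmetic.

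The only genuine difference from the paper is how the double integral is evaluated. The paper makes the single change of variables $u=t_1^{-1}t$, $w=t_1 t^2$, which factors the integrand immediately as a product of two Gamma integrals $\tfrac{1}{6}\Gamma((s+1)/2)\Gamma(s)$. You instead integrate in $t$ first (a Gaussian giving $\Gamma((3s+1)/2)$), then homogenize and recognize a Beta integral in the remaining variable, after which the spurious $\Gamma((3s+1)/2)$ cancels. Both routes are short; the paper's is slightly more direct since it avoids introducing and then cancelling an auxiliary Gamma factor.
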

\begin{proof}
Writing $e_{t_1}$ for the positive generator of the fiber of $\mathcal{W}$ over $\iota(t_1)$, we have $(v,e_{t_1})=t_1^{-1} \sigma_\R(v)$ and hence 
$$
\iota^* \psi(v) = e^{- t_{1}^{-2} \sigma_\R(v)^2 - t_1|\sigma_\C(v)|^2} t_1^{-1} \sigma_\R(v) \frac{dt_1}{t_1}.
$$
This expression remains invariant upon replacing $(v,t_1)$ with $(\varepsilon v, \varepsilon_\R t_1)$ (use $\varepsilon_\R |\varepsilon_\C|^2 =1$). Since $h \equiv 1 \mod L$, this gives
\begin{equation*}
\begin{split}
\int_{\langle \varepsilon \rangle} & E_{\psi}(h,L,s) \\ &= \int_{t_0 \varepsilon_\R}^{t_0} \int_0^\infty \sum_{v \in 1 + L} e^{-t_1^{-2} t^2 \sigma_\R(v)^2 - t_1 t^2 |\sigma_\C(v)|^2} t t_1^{-1} \sigma_\R(v) \frac{dt_1}{t_1} t^{3s} \frac{dt}{t} \\
&= \sum_{v \in (1+L)/\mathcal{O}^{+,\times}_{\mathfrak{f}}} \sigma_\R(v) \int_0^\infty \int_0^\infty e^{-t_1^{-2} t^2 \sigma_\R(v)^2 - t_1 t^2 |\sigma_\C(v)|^2} t_1^{-1} t^{3s+1} \frac{dt_1 dt}{t_1 t}.
\end{split}
\end{equation*}
The integral can be computed using the change of variables $u=t_1^{-1} t$, $w=t_1 t^2$, which gives
\begin{equation*}
\begin{split}
\int_0^\infty \int_0^\infty & e^{-u^2 \sigma_\R(v)^2 - w|\sigma_\C(v)|^2} (u^2 w^{-1})^{1/3} (uw)^{s+1/3}  \frac{1}{3} \frac{du dw}{uw} \\
&= \frac{1}{3} \left(\int_0^\infty e^{- u^2 \sigma_\R(v)^2} u^{1+s} \frac{du}{u} \right) \left( \int_0^\infty e^{- w |\sigma_\C(v)|^2} w^{s} \frac{dw}{w} \right) \\
&= \frac{1}{|\sigma_\R(v)|^{1+s}|\sigma_\C(v)|^{2s}} \frac{1}{6}\Gamma\left(\frac{s+1}{2} \right) \Gamma\left(s\right),
\end{split}
\end{equation*}
proving the claim.
\end{proof}

For an auxiliary ideal $\mathfrak{a}$ as in Section \ref{subsection:complex_cubic_fields_and_elliptic_gamma_function}, define the ``$\mathfrak{a}$--smoothed'' forms
\begin{equation}
\begin{split}
\Theta_{\psi_t,\mathfrak{a}}(h,L) &= \Theta_{\psi_t}(h,\mathfrak{a}^{-1}L)-\mathrm{N}(\mathfrak{a}) \Theta_{\psi_t}(h,L) \\
E_{\psi,\mathfrak{a}}(h,L,s) &= E_\psi(h,\mathfrak{a}^{-1}L,s)-\mathrm{N}(\mathfrak{a}) E_{\psi}(h,L,s).
\end{split}
\end{equation}
Then Lemmas \ref{lemma:int_E_closed_geodesic} and \ref{lemma:derivative_zeta_sign} imply
\begin{equation} \label{eq:int_Eisenstein_derivative_zeta}
\int_{\langle \varepsilon \rangle} E_{\psi,\mathfrak{a}}(h,\mathfrak{fb}^{-1},0) = \frac{\sqrt{\pi}}{3} \zeta_{\mathfrak{f},\mathfrak{a}}'(\mathfrak{b},0).
\end{equation}

\subsubsection{Period on a generalized modular symbol} For $t_1>0$, define metrics $H_{a, t_1}, H_{\varepsilon_\R b, t_1} \in \mathbf{S}$ by
\begin{equation} \label{eq:modular_symbol_metric}
\begin{split}
H_{a, t_1}(v) &= t_1^{-2} a(v)^2 + t_1 |\sigma_\C(v)|^2, \\ 
H_{\varepsilon_\R b, t_1}(v) &= t_1^{-2} \varepsilon_\R^2 b(v)^2 + t_1 |\sigma_\C(v)|^2, \quad v \in V^\vee_\R.
\end{split}
\end{equation}
We obtain a map
$$
\iota_{\langle a \rangle}: \R_{>0} \to \overline{\mathbf{S}} \times \mathbf{X}, \quad t_1 \mapsto (\overline{H}_{a, t_1}, \langle \mathbf{x} \rangle).
$$
Contrary to the map $\iota$ in the previous subsection, this map does not have compact image in $\Gamma_1(h) \backslash (\overline{\mathbf{S}} \times \mathbf{X})$. However, as $t_1 \to \infty$ the image of $\iota_{\langle a \rangle}$ approaches the closed geodesic defined by $\iota$ (see Figure \ref{fig:figure1}). More precisely, we have the following.

\begin{lemma} \label{lemma:approx_closed_geodesic_modular_symbol}
Let $\alpha \in \Omega^1(\Gamma_1(h) \backslash (\overline{\mathbf{S}} \times \mathbf{X}))$. Then
$$
\int_{\langle \varepsilon \rangle} \alpha = \lim_{t_1 \to \infty} \int_{t_1 \varepsilon_\R}^{t_1} \iota_{\langle a \rangle}^* \alpha.
$$
\end{lemma}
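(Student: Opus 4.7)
The plan is to identify a reparameterization of the closed geodesic $\iota$ that is asymptotic to $\iota_{\langle a \rangle}$ as $t_1 \to \infty$, and then to exploit the periodicity of $\iota^*\alpha$. Set $a_0 := a(e_0)$ for any generator $e_0 \in L \otimes \R$ of $\ker(\sigma_\C)$. I would first observe that $a_0 \neq 0$: Lemma \ref{L:L22} identifies $\ker(a) \cap (L \otimes \Q)$ with $\Q \lambda + \Q \varepsilon^{-1}\lambda$, and an equation $e_0 = c_1\lambda + c_2 \varepsilon^{-1}\lambda$ with $c_i \in \R$ would force $c_1 + c_2 \varepsilon_\C^{-1} = 0$ after applying $\sigma_\C$; since $\varepsilon_\C \notin \R$ and $\sigma_\C(\lambda) \neq 0$ (otherwise $\lambda$ and $\varepsilon^{-1}\lambda$ would both lie in the one-dimensional $\ker\sigma_\C$), this forces $c_1 = c_2 = 0$. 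Set $\tilde\iota(t_1) := \iota(|a_0|^{-2/3} t_1)$.

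The identity $\iota(t_1 \varepsilon_\R) = \varepsilon \cdot \iota(t_1)$ in $\overline{\mathbf{S}} \times \mathbf{X}$ together with the $\Gamma_1(h)$-invariance of $\alpha$ imply that $\iota^*\alpha$ is invariant under the dilation $t_1 \mapsto t_1\varepsilon_\R$, so a change of variables gives
\begin{equation*}
\int_{t_1 \varepsilon_\R}^{t_1} \tilde\iota^*\alpha \;=\; \int_{|a_0|^{-2/3} t_1 \varepsilon_\R}^{|a_0|^{-2/3} t_1}\iota^*\alpha \;=\; \int_{\langle \varepsilon \rangle} \alpha
\end{equation*}
for every $t_1 > 0$. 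It therefore suffices to show that $\int_{t_1\varepsilon_\R}^{t_1}(\iota_{\langle a \rangle}^*\alpha - \tilde\iota^*\alpha) \to 0$ as $t_1 \to \infty$.

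The heart of the argument is an asymptotic comparison of the two curves. Pick a basis $(e_0, e_1, e_2)$ of $L \otimes \R$ in which $|\sigma_\C|^2$ has matrix $\mathrm{diag}(0, 1, 1)$, and work in horospherical coordinates $(y_1, y_2, u)$ adapted to the parabolic stabilizing the flag $0 \subset \langle e_0 \rangle \subset L \otimes \R$. A direct Iwasawa-type computation shows that $\det H_{a,t_1} = a_0^2$ and, after rescaling to the $\det=1$ section, gives
\begin{equation*}
\iota_{\langle a\rangle}(t_1) \;\longleftrightarrow\; \bigl(|a_0|^{2/3}t_1^{-1},\; |a_0|^{-1/3}t_1^{1/2},\; \mathrm{sgn}(a_0)\, t_1^{-3/2}(a_1, a_2)\bigr),
\end{equation*}
with $a_j := a(e_j)$, while $\tilde\iota(t_1)$ has exactly the same $(y_1, y_2)$ and $u = 0$. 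A calculation of the $\mathrm{SL}_3(\R)$-invariant metric in these coordinates (for instance by diagonalizing the pure unipotent shift $n_u^{-T}n_u^{-1}$, whose eigenvalues are $1, 1 \pm |u| + O(|u|^2)$) yields
\begin{equation*}
d(\iota_{\langle a \rangle}(t_1), \tilde\iota(t_1)) = O(t_1^{-3/2}), \qquad \|\iota_{\langle a\rangle}'(t_1) - \tilde\iota'(t_1)\| = O(t_1^{-5/2}).
\end{equation*}

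Finally, $\langle \varepsilon \rangle$ is a compact circle in the non-compact quotient $\Gamma_1(h)\backslash(\overline{\mathbf{S}} \times \mathbf{X})$, so a sufficiently small tubular neighborhood $U$ is relatively compact and $\alpha$ is $C^1$-bounded on $U$. For $t_1$ large the estimates above place $\iota_{\langle a \rangle}(t_1), \tilde\iota(t_1) \in U$, whence
\begin{equation*}
\bigl|\iota_{\langle a \rangle}^*\alpha(t_1) - \tilde\iota^*\alpha(t_1)\bigr| \;\ll\; \|\iota_{\langle a \rangle}'(t_1) - \tilde\iota'(t_1)\| + d(\iota_{\langle a \rangle}(t_1), \tilde\iota(t_1))\cdot \|\tilde\iota'(t_1)\| \;=\; O(t_1^{-5/2}),
\end{equation*}
using the crude bound $\|\tilde\iota'(t_1)\| = O(t_1^{-1})$ on the speed of $\iota$. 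Integrating over $[t_1\varepsilon_\R, t_1]$ gives a quantity $O(t_1^{-3/2})$ tending to $0$, which combined with the periodicity identity proves the lemma. The main obstacle is the horospherical computation and the verification that the $u$-direction of the invariant metric degenerates at the cusp precisely so as to leave the distance between $\iota_{\langle a \rangle}(t_1)$ and $\tilde\iota(t_1)$ small (rather than bounded below) as $t_1 \to \infty$.
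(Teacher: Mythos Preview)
Your argument is essentially the paper's: both observe that in horospherical coordinates for the maximal parabolic attached to $\ker(\sigma_\C)\subset L\otimes\R$ (equivalently, to the plane $\langle\mathrm{Re}(\mathbf{x}),\mathrm{Im}(\mathbf{x})\rangle\subset V_\R$), the curves $\iota$ and $\iota_{\langle a\rangle}$ share the same $A_P$-component and differ only by a fixed unipotent, which is contracted as $t_1\to\infty$. The paper simply cites Borel's distance formula for this contraction; your explicit tangent-vector bound together with the $C^1$ control of $\alpha$ on a relatively compact tube around the closed geodesic makes precise the passage from ``distance $\to 0$'' to ``integrals converge,'' a step the paper leaves implicit.

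One small point to tighten: for $\tilde\iota(t_1)$ to have $u=0$ in your coordinates you need $H_{t_1}$ to be block-diagonal with respect to $\langle e_0\rangle\oplus\langle e_1,e_2\rangle$, which requires $e_1,e_2\in\ker(\sigma_\R)$ in addition to your condition on $|\sigma_\C|^2$; and for the $(y_1,y_2)$-coordinates of $\tilde\iota(t_1)$ and $\iota_{\langle a\rangle}(t_1)$ to agree with the stated reparametrization factor $|a_0|^{-2/3}$ you should normalize $\sigma_\R(e_0)=1$. Both conditions are compatible with your setup and should simply be added to the choice of basis.
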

\begin{proof}
For any $t_1 >0$, we have
$$
\int_{\langle \varepsilon \rangle} \alpha = \int_{t_1 \varepsilon_\R}^{t_1} \iota^* \alpha.
$$
Since $H_{x^3 a,x^2 t_1}=x^2 H_{a, t_1}$ for any $x>0$, we may, rescaling $a$ if necessary, assume that $H_{t_1}$ and $H_{a,t_1}$ have the same determinant. It then suffices to show that the distance $d(\overline{H}_{t_1},\overline{H}_{a,t_1})$ (where $d$ denotes a $G$--invariant Riemannian distance on $\overline{\mathbf{S}}$) is rapidly decreasing as $t_1 \to \infty$. Let $P$ be the real parabolic of $G/Z^0$ stabilizing the plane $\langle \mathrm{Re}(\mathbf{x}), \mathrm{Im}(\mathbf{x}) \rangle$.  Let $P=N_P A_P M_P$ be its Langlands decomposition and let $X_P = M_P / (M_P \cap K)$. Consider the associated horospherical decomposition (\cite[\textsection I.1]{BorelJi})
$$
N_P \times A_P \times X_P \simeq \overline{\mathbf{S}}, \quad (n, \exp(t_1 H), m (M_P \cap K)) \mapsto n \exp(t_1 H)mK.
$$
In the above coordinates the points $\overline{H}_{t_1}$ and $\overline{H}_{ a, t_1}$ of $\overline{\mathbf{S}}$ both have $A_P$-component $\exp(t_1 H)$ and differ only by multiplication by an element in $N_P$. The explicit formula for the distance in \cite[Prop. 4.3]{BorelStableRealCohomology} shows that $d(\overline{H}_{t_1},\overline{H}_{a,t_1})$ decreases exponentially as $t_1 \to \infty$.
\end{proof}

Consider the metric $H_{a,t_1}$ for some $t_1>0$ and the corresponding point $\iota_{\langle a \rangle}(t_1) \in \mathbf{S}$. Under this metric the subspaces $\ker(\sigma_\C)$ and $\ker(a)$ of $V_\R^\vee$ are orthogonal; the subspace $\ker(\sigma_\C)$ is the fiber over $\iota_{\langle a \rangle}(t_1)$ of the bundle $\mathcal{W}$, and $\sigma_\C$ identifies $\ker(a)$ with the fiber of $\pi^* \mathcal{L}^\vee$ (cf. \eqref{eq:orthogonal_sum_bundle}). Note that we must have $e_\mathcal{W}(a)<0$; otherwise, if $e_\mathcal{W}(a)>0$, then a basis $(\lambda,\mu)$ of $\ker(a)$ is oriented if and only if $(\sigma_\C(\lambda),\sigma_\C(\mu))$ is an oriented basis of $\C$, which contradicts the condition that $\mathbf{x} \in U_a$. It follows that $(v,e_\mathcal{W})=-t_1^{-1}a(v)$.

For positive real numbers $t_1, t$, let us define $\Theta_a(t_1,t,h,L)$ via
$$
\iota^*_{\langle a \rangle} \Theta_{\psi_t}(h,L) = \Theta_a(t_1,t,h,L) \frac{dt_1}{t_1}.
$$
We may then write
\begin{equation}
\Theta_a(t_1,t,h,L) = \sum_{v \in h+L} \psi^0(tv)_{a,t_1}
\end{equation}
with
\begin{equation} \label{eq:psi_0_def}
\psi^0(v)_{a,t_1} = -t_1^{-1} a(v) e^{-t_1^{-2} a(v)^2 - t_1|\sigma_\C(v)|^2}.
\end{equation}

We also have
$$
\iota^*_{\langle a \rangle} \Theta_{\psi_t,\mathfrak{a}}(h,L) = \Theta_{a,\mathfrak{a}}(t_1,t,h,L) \frac{dt_1}{t_1}
$$
with
$$
\Theta_{a,\mathfrak{a}}(t_1,t,h,L) := \Theta_a(t_1,t,h,\mathfrak{a}^{-1}L) - \mathrm{N}(\mathfrak{a})\Theta_a(t_1,t,h,L).
$$

\begin{lemma} \label{lemma:Eis_rapid_decrease_cusp}
For any $s \in \C$, the Mellin transform
$$
\int_0^\infty |\Theta_{a,\mathfrak{a}}(t_1,t,h,L)| t^s \frac{dt}{t}
$$
decreases rapidly as $t_1 \to 0$. In particular, the form
$
\iota_{\langle a \rangle}^* E_{\psi,\mathfrak{a}}(h,L,s) \in \Omega^1(\R_{>0})
$
is rapidly decreasing as $t_1 \to 0$.
\end{lemma}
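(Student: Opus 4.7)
\emph{Proof plan.} The approach is to apply Poisson summation and to exploit the admissibility of $\lambda$ together with the $\mathfrak{a}$-smoothing to eliminate all Fourier modes whose ``complex component'' vanishes. Choose a splitting of $V_\R^\vee$ in which the first coordinate is $a$ and the remaining two are $\mathrm{Re}\,\sigma_\C, \mathrm{Im}\,\sigma_\C$, and write $(\eta_a, \eta_\C) \in \R \times \C$ for the corresponding dual coordinates on $V_\R = \Lambda \otimes \R$. A direct Gaussian integral computation then gives
\begin{equation*}
\widehat{\psi^0(t\,\cdot\,)_{a,t_1}}(\eta) \;=\; C\, \eta_a\, t_1\, t^{-4}\, e^{-\pi^2 \eta_a^2 t_1^2/t^2}\, e^{-\pi^2 |\eta_\C|^2/(t_1 t^2)}
\end{equation*}
for a nonzero constant $C$. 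Applying Poisson summation to the pair of lattices $L \subset \mathfrak{a}^{-1}L$ and using $\mathrm{vol}(\mathfrak{a}^{-1}L) = \mathrm{vol}(L)/\mathrm{N}(\mathfrak{a})$ then yields
\begin{equation*}
\Theta_{a,\mathfrak{a}}(t_1, t, h, L) \;=\; -\frac{\mathrm{N}(\mathfrak{a})}{\mathrm{vol}(L)} \sum_{\eta \in \Lambda \setminus (\mathfrak{a}^{-1}L)^*} e^{2\pi i \eta(h)}\, \widehat{\psi^0(t\,\cdot\,)_{a,t_1}}(\eta),
\end{equation*}
where $(\mathfrak{a}^{-1}L)^* = \{\eta \in \Lambda : \eta(\lambda) \in \mathrm{N}(\mathfrak{a})\Z\}$ has index $\mathrm{N}(\mathfrak{a})$ in $\Lambda$.

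The crucial observation is that since $a(\lambda) = 0$ by Lemma \ref{L:L22}, every $\eta \in \Z a$ satisfies $\eta(\lambda) = 0 \in \mathrm{N}(\mathfrak{a})\Z$, hence lies in $(\mathfrak{a}^{-1}L)^*$ and is excluded from the sum. Consequently every $\eta$ appearing in the Poisson expansion has non-zero image in the rank-two quotient $\Lambda/\Z a$, and this quotient embeds as a lattice in the quotient real plane $V_\R/\R a \simeq \R^2$. We therefore obtain a uniform lower bound $|\eta_\C| \geq c_0 > 0$ for every such $\eta$, which yields the Gaussian decay factor $e^{-\pi^2 c_0^2/(t_1 t^2)}$ in the pointwise estimate for $|\Theta_{a,\mathfrak{a}}(t_1, t, h, L)|$.

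For the complementary regime of large $t$ I would use the direct pointwise bound: since $a \in \mathrm{Hom}(\mathfrak{a}^{-1}L, \Z)$ and $a(h) = 0$, every $v \in h + \mathfrak{a}^{-1}L$ with $a(v) \neq 0$ satisfies $|a(v)| \geq 1$, so the corresponding term of the theta series is bounded by $t_1^{-1} t\, e^{-t^2/t_1^2}$ times a convergent planar theta sum in $\sigma_\C(v)$. Splitting the $t$-integration at, say, $t = t_1^{1/2}$ and applying the Poisson estimate on $(0, t_1^{1/2}]$ and the direct estimate on $[t_1^{1/2}, \infty)$ gives the required rapid decay of $\int_0^\infty |\Theta_{a,\mathfrak{a}}(t_1, t, h, L)|\, t^{\mathrm{Re}(s)}\, dt/t$ as $t_1 \to 0$. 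The main technical obstacle is controlling the residual Gaussian theta sums uniformly in both $t_1$ and $t$: the sum in $\eta_a$ flattens as $t_1 \to 0$ and can grow like $(t/t_1)^2$, but such polynomial blow-up is harmless once compared with the super-exponential factor $e^{-\pi^2 c_0^2/(t_1 t^2)}$ extracted above.
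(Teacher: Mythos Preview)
Your argument is correct and follows the same overall architecture as the paper: split the $t$-integral, use the direct pointwise bound for large $t$ (exploiting $a(h)=0$ and $a\in\mathrm{Hom}(\mathfrak{a}^{-1}L,\Z)$ so that every contributing $v$ has $|a(v)|\ge 1$), and apply Poisson summation for small $t$, using the $\mathfrak{a}$-smoothing to kill the dangerous Fourier modes.

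The one genuine difference is \emph{which} Fourier modes you identify as killed. You observe that $a(\lambda)=0$ forces $\Z a\subset(\mathfrak{a}^{-1}L)^*$, so the Poisson-dual sum avoids the line $\Z a$; hence every surviving $\eta$ has non-zero image in the rank-two lattice $\Lambda/\Z a$, giving the uniform lower bound $|\eta_\C|\ge c_0$ and the decisive factor $e^{-\pi^2 c_0^2/(t_1 t^2)}$. The paper instead notes that the fibre integral of $\varphi_{h,L,\mathfrak{a}}$ along $\mathbf{A}_f\gamma$ vanishes, so $\hat\varphi_{h,L,\mathfrak{a}}$ is identically zero on the whole \emph{plane} $\ker(\gamma)\subset V_\Q$. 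Since $a\in\ker(\gamma)$, the paper's vanishing set strictly contains yours; but because $\eta(\gamma)$ is a linear form in $(\eta_2,\eta_3)$ alone (as $a(\gamma)=0$), both conditions feed into the same Gaussian factor $e^{-\pi^2|\eta_\C|^2/(t_1 t^2)}$, and either one suffices. Your route is arguably more direct here, while the paper's $\gamma$-fibre vanishing is the observation that gets reused later in Lemma~\ref{lemma:exchange_integral_and_sum}.

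Two minor points worth tightening in a final write-up: (i) you should record that $a,\mathrm{Re}\,\sigma_\C,\mathrm{Im}\,\sigma_\C$ are actually linearly independent in $V_\R$ (this uses that $a$ takes rational values on $K$ while any non-trivial real combination of $\mathrm{Re}\,\sigma_\C,\mathrm{Im}\,\sigma_\C$ does not); and (ii) in the large-$t$ regime your ``convergent planar theta sum'' is only $O((t_1 t^2)^{-1})$ near the splitting point, not bounded---you note this at the end, and it is indeed harmless against $e^{-t^2/t_1^2}$, but it should be stated where the bound is first used. The splitting point $t=t_1^{1/2}$ versus the paper's $t=1$ is immaterial.
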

\begin{proof}
It suffices to prove the statement for the integrals over the domains $t \in (1,\infty)$ and $t \in (0,1)$ separately. Consider first the domain $t>1$: we have
\begin{equation} \label{eq:Mellin_estimate_1_proof}
\int_1^\infty |\Theta_a(t_1,t,h,L)| t^s \frac{dt}{t} \leq \sum_{\lambda \in h+L} \int_1^\infty |\psi^0(t \lambda)_{a,t_1}| t^s \frac{dt}{t}
\end{equation}
and a similar inequality with $L$ replaced by $\mathfrak{a}^{-1}L$. Hence it suffices to show that the right hand side of \eqref{eq:Mellin_estimate_1_proof}, and the same sum with $L$ replaced by $\mathfrak{a}^{-1}L$, both decrease rapidly as $t_1 \to 0$. To see this, let us take $\lambda_1 \in L$ with $a(\lambda_1)=1$ (recall $a \in \mathrm{Hom}(L,\Z)$ is primitive). Then we may write $L= (L \cap H(a)) + \Z \lambda_1$ and, since $a(h)=0$, also $h+L=((h+L) \cap H(a))+\Z \lambda_1$. Also, the Schwartz function $\psi^0(v)_{a,t_1}$ is given by \eqref{eq:psi_0_def}; in particular, $\psi^0(v)_{a,t_1}=0$ when $v$ belongs to  $\ker(a)$. Hence we have
\begin{equation*}
\begin{split}
\sum_{\lambda \in h+L} |\psi^0(t \lambda )_{a,t_1}| &= \sum_{n \in \Z} \sum_{v \in (h+L) \cap H(a)} |\psi^0(t(n\lambda_1+v))| \\
&= \sum_{\substack{n \in \Z \\ n \neq 0}} t_1^{-1}t|n|e^{-t_1^{-2} t^2 n^2} \sum_{v \in (h+L) \cap H(a)} e^{-t_1 t^2|\sigma_\C(n \lambda_1+v)|^2}.
\end{split}
\end{equation*}
The inner sum is over a (translate of a) rank two lattice and so as $t_1 \to 0$ it is $O(t_1^{-1}t^{-2})$, where the implicit constant is independent of $n$. The rapid decrease of the right hand side of  \eqref{eq:Mellin_estimate_1_proof} for the lattices $L$ and $\mathfrak{a}^{-1}L$ follows.

Now consider the integration over $t \in (0,1)$. Here we will apply Poisson summation. Let us write $\varphi_{h,L,\mathfrak{a}}$ for the Schwartz form in $\mathcal{S}(V(\mathbf{A}_f)^\vee)$ defined by
\begin{equation} \label{eq:finite_Schwartz_smoothing}
\varphi_{h,L,\mathfrak{a}}(v) = 1_{\mathfrak{a}^{-1} L \otimes \hat{\Z}}(v-h)-\mathrm{N}(\mathfrak{a})1_{L \otimes \hat{\Z}}(v-h)
\end{equation}
(where $1_X$ denotes the indicator function of $X$),
so that
$$
\Theta_{a,\mathfrak{a}}(t_1,t,h,L) = \sum_{v \in V_\Q^\vee} \varphi_{h,L,\mathfrak{a}}(v) \psi^0(tv)_{a,t_1}.
$$
Writing $\hat{\varphi}_{h,L,\mathfrak{a}} \in \mathcal{S}(V(\mathbf{A}_f))$ and $\hat{\psi}^0_{a,t_1} \in \mathcal{S}(V_\R)$
for the Fourier transforms of $\varphi_{h,L,\mathfrak{a}}$ and $\psi^0_{a,t_1}$, Poisson summation gives
$$
\Theta_{a,\mathfrak{a}}(t_1,t,h,L) = t^{-3} \sum_{w \in V_\Q} \hat{\varphi}_{h,L,\mathfrak{a}}(w) \hat{\psi}^0_{a,t_1}(t^{-1}w).
$$
Note that the integral of $\varphi_{h,L,\mathfrak{a}}$ over the line $\mathbf{A}_f \gamma \subset V(\mathbf{A}_f)^\vee$ vanishes. This implies that the restriction of $\hat{\varphi}_{h,L,\mathfrak{a}}$ to $\ker(\gamma)$ is identically zero. The argument now proceeds as in the previous case; it suffices to observe that a direct computation of the Fourier transform $\hat{\psi}^0_{a,t_1}$ shows that, for a fixed $w_1 \in V_\Q$, $w_1 \not\in \ker(\gamma)$, one has a bound similar to the above, namely
$$
\sum_{w \in ker(\gamma)} \hat{\varphi}_{h,L,\mathfrak{a}}(w+n w_1) \hat{\psi}^0(t^{-1}(w+n w_1))_{a,t_1} = O(e^{-\alpha t_1^{-1}t^{-1}n^2}), \ n \in \Z, \ n \neq 0,
$$
for some $\alpha>0$.
\end{proof}

Using these lemmas we obtain the following explicit expression for $\zeta_{\mathfrak{f},\mathfrak{a}}'(\mathfrak{b},0)$.  Let us define
$$
F_{\mathfrak{f},\mathfrak{a}}(t_1,t,h,\mathfrak{b}) = \Theta_{\varepsilon_\R b,\mathfrak{a}}(t_1,t,h,L) - \Theta_{a,\mathfrak{a}}(t_1,t,h,L).
$$

\begin{lemma} \label{lemma:zeta_derivative_double_integral}
The function $F_{\mathfrak{f},\mathfrak{a}}(h,\mathfrak{b})$ belongs to $L^1(\R_{>0}^2, \tfrac{dt_1 dt}{t_1 t})$ and
\begin{equation*}
\frac{\sqrt{\pi}}{3} \zeta'_{\mathfrak{f},\mathfrak{a}}(\mathfrak{b},0) =  \int_0^\infty \int_0^\infty F_{\mathfrak{f},\mathfrak{a}}(t_1,t,h,\mathfrak{b}) \frac{dt_1}{t_1} \frac{dt}{t}.
\end{equation*}
\end{lemma}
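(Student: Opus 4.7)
The plan is to recover the closed-geodesic period $\int_{\langle \varepsilon \rangle} E_{\psi,\mathfrak{a}}(h,L,0)$ computed in \eqref{eq:int_Eisenstein_derivative_zeta} by telescoping the double integral, using the $\varepsilon$-equivariance of $\Theta_{\psi_t,\mathfrak{a}}$. The first step is a pointwise identity relating the two integrands appearing in $F$. Using $b=\varepsilon a$, $|\sigma_\C(\varepsilon)|^{-2}=\varepsilon_\R$ and $\varepsilon \langle \mathbf{x}\rangle = \langle \mathbf{x}\rangle$, a direct computation with \eqref{eq:closed_geodesic_metric}--\eqref{eq:modular_symbol_metric} shows that the curves $t_1 \mapsto (\overline{H}_{a,t_1},\langle \mathbf{x}\rangle)$ and $s \mapsto (\overline{H}_{\varepsilon_\R b, s},\langle \mathbf{x}\rangle)$ in $\overline{\mathbf{S}} \times \mathbf{X}$ satisfy
$$
(\overline{H}_{\varepsilon_\R b, s},\langle \mathbf{x}\rangle) = \varepsilon \cdot (\overline{H}_{a, s/\varepsilon_\R},\langle \mathbf{x}\rangle).
$$
Since $\varepsilon \in \Gamma_1(h)$ and $\Theta_{\psi_t,\mathfrak{a}}(h,L)$ is $\Gamma_1(h)$-invariant, pulling back and matching coefficients of $\tfrac{ds}{s}$ yields the key identity
$$
\Theta_{\varepsilon_\R b,\mathfrak{a}}(s,t,h,L) = \Theta_{a,\mathfrak{a}}(s/\varepsilon_\R,t,h,L),
$$
so that $F(t_1,t,h,\mathfrak{b}) = \Theta_{a,\mathfrak{a}}(t_1/\varepsilon_\R,t,h,L) - \Theta_{a,\mathfrak{a}}(t_1,t,h,L)$ is a discrete derivative along the $\log t_1$ direction.

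Granting the $L^1$ claim (see below), Fubini together with the change of variable $s=t_1/\varepsilon_\R$ gives, for $0<\epsilon<R$,
$$
\int_\epsilon^R \!\int_0^\infty \! F \,\tfrac{dt}{t}\tfrac{dt_1}{t_1} \;=\; \int_R^{R/\varepsilon_\R} I_a(s) \tfrac{ds}{s} \;-\; \int_\epsilon^{\epsilon/\varepsilon_\R} I_a(s) \tfrac{ds}{s},
$$
where $I_a(t_1):=\int_0^\infty \Theta_{a,\mathfrak{a}}(t_1,t,h,L)\tfrac{dt}{t}$. By Lemma~\ref{lemma:Eis_rapid_decrease_cusp} at $s=0$, the function $I_a(t_1)$ is rapidly decreasing as $t_1 \to 0$, so the second term vanishes as $\epsilon \to 0$. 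Setting $t_1=R/\varepsilon_\R$, the first term equals $\int_{t_1\varepsilon_\R}^{t_1} \iota_{\langle a\rangle}^* E_{\psi,\mathfrak{a}}(h,L,0)$, which by Lemma~\ref{lemma:approx_closed_geodesic_modular_symbol} converges as $R \to \infty$ to $\int_{\langle \varepsilon\rangle} E_{\psi,\mathfrak{a}}(h,L,0)$. Combined with \eqref{eq:int_Eisenstein_derivative_zeta}, this is exactly $\tfrac{\sqrt{\pi}}{3}\zeta'_{\mathfrak{f},\mathfrak{a}}(\mathfrak{b},0)$.

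The main obstacle is the $L^1$ bound on $F$. Near $t_1=0$ (and, via $t_1 \mapsto t_1/\varepsilon_\R$, also for the shifted term), Lemma~\ref{lemma:Eis_rapid_decrease_cusp} already gives super-polynomial decay of $\int_0^\infty |\Theta_{a,\mathfrak{a}}(\cdot, t)|\tfrac{dt}{t}$. The delicate regime is $t_1 \to \infty$, where neither $|\Theta_{a,\mathfrak{a}}(t_1,\cdot)|$ nor $|\Theta_{a,\mathfrak{a}}(t_1/\varepsilon_\R,\cdot)|$ is integrable by itself, but the difference is. Geometrically this reflects the fact that $\iota_{\langle a\rangle}(t_1)$ approaches the closed geodesic $\langle \varepsilon\rangle$ at an exponential rate in $\log t_1$, along which the form is $\varepsilon_\R$-periodic. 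I expect to turn this into an honest integrable majorant by adapting the Poisson summation argument in the proof of Lemma~\ref{lemma:Eis_rapid_decrease_cusp}, exploiting the vanishing of $\hat{\varphi}_{h,L,\mathfrak{a}}$ on $\ker(\gamma)$ to extract a bound on $\Theta_{a,\mathfrak{a}}(t_1/\varepsilon_\R,t,h,L) - \Theta_{a,\mathfrak{a}}(t_1,t,h,L)$ whose Mellin transform in $t$ decreases exponentially in $\log t_1$ on $[1,\infty)$.
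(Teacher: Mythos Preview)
Your derivation of the identity is essentially the same as the paper's: you establish $\Theta_{\varepsilon_\R b,\mathfrak{a}}(t_1,t,h,L)=\Theta_{a,\mathfrak{a}}(t_1/\varepsilon_\R,t,h,L)$ (the paper checks $\psi^0(\varepsilon^{-1}v)_{a,t_1/\varepsilon_\R}=\psi^0(v)_{\varepsilon_\R b,t_1}$ directly), telescope, send the lower endpoint to $0$ via Lemma~\ref{lemma:Eis_rapid_decrease_cusp}, and identify the surviving boundary term with the period over $\langle\varepsilon\rangle$ via Lemma~\ref{lemma:approx_closed_geodesic_modular_symbol} and \eqref{eq:int_Eisenstein_derivative_zeta}. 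That part is fine.

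The gap is in the $L^1$ bound at $t_1\to\infty$, and your proposed mechanism is not the one that works. The vanishing of $\hat{\varphi}_{h,L,\mathfrak{a}}$ on $\ker(\gamma)$ is a \emph{finite-adelic} feature coming from the $\mathfrak{a}$-smoothing; it is what drives the rapid decrease as $t_1\to 0$ in Lemma~\ref{lemma:Eis_rapid_decrease_cusp}, but it has no bite as $t_1\to\infty$. What you need there is a purely \emph{archimedean} cancellation: one must show that the two Schwartz functions $\psi^0(\cdot)_{a,t_1}$ and $\psi^0(\cdot)_{\varepsilon_\R b,t_1}$ differ by $O(t_1^{-3/2})$ pointwise, with a Gaussian majorant. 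Concretely, the paper proves (Lemma~\ref{lemma:arch_bounds_1}) that
\[
|H_{a,t_1}(v)-H_{\varepsilon_\R b,t_1}(v)| \le A\,t_1^{-3/2}H_{a,t_1}(v),\qquad
|\psi^0(v)_{a,t_1}-\psi^0(v)_{\varepsilon_\R b,t_1}| \le A\,t_1^{-3/2}e^{-H_{a,t_1}(v)/2},
\]
and the analogous bound for the Fourier transforms. This is the quantitative version of your remark that the two rays approach the closed geodesic; the $t_1^{-3/2}$ comes from the explicit computation $|a(v)-\varepsilon_\R b(v)|=|a(v-\varepsilon_\R\varepsilon^{-1}v)|\ll|\sigma_\C(v)|\le t_1^{-1/2}H_{a,t_1}(v)^{1/2}$, using $\sigma_\R(v-\varepsilon_\R\varepsilon^{-1}v)=0$. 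Feeding this into the theta sum (splitting $t\gtrless 1$ and applying Poisson for $t<1$) gives an integrable majorant of the form $t_1^{-3/2}\cdot(\text{bounded in }t_1)$; boundedness of the remaining sum in $t_1$ is obtained by comparing $H_{a,t_1}$ to the genuinely $\varepsilon$-periodic metric $\tilde H_{t_1}(v)=t_1^{-2}\alpha^2\sigma_\R(v)^2+t_1|\sigma_\C(v)|^2$. Without this archimedean estimate your last paragraph does not close, and no amount of $\mathfrak{a}$-smoothing will supply it.
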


The proof relies on the following bounds. Let us write $\hat{\psi}^0(\cdot)_{a,t_1}$ for the Fourier transform of $\psi^0_{a,t_1}$, defined by
$$
\hat{\psi}^0(w)_{a,t_1} = \int_{V_\R^\vee} \psi^0(v)_{a,t_1} e^{-2\pi i (v,w)} dv, \quad w \in V_\R,
$$
with respect to some invariant measure $dv$. Recall the metrics $H_{a,t_1}$, $H_{\varepsilon b,t_1}$ introduced in \eqref{eq:modular_symbol_metric}; part (i) of the next Lemma quantifies the statement that the distance in $\overline{\mathbf{S}}$ between the points defined by these metrics tends to zero as $t_1$ grows (cf. Figure \ref{fig:figure1}).

\begin{lemma} \label{lemma:arch_bounds_1}
\begin{enumerate}
\item[(i)] There exists $A>0$ such that
$$
\left| H_{a,t_1}(v) - H_{\varepsilon_\R b, t_1}(v) \right|  < A t_1^{-3/2} H_{a,t_1}(v)
$$
for every $v \in V^\vee_\R$ and $t_1 > 1$.

\item[(ii)] There exists $A>0$ such that
$$
|\psi^0(v)_{a,t_1} - \psi^0(v)_{\varepsilon_\R b, t_1}| < A t_1^{-3/2} e^{-H_{a,t_1}(v)/2}
$$
for every $v \in V^\vee_\R$ and $t_1 \gg 1$.

\item[(iii)] There exists $A>0$ such that
$$
|\hat{\psi}^0(w)_{a,t_1} - \hat{\psi}^0(w)_{\varepsilon_\R b, t_1}| < A t_1^{-3/2} e^{-H_{a,t_1}(w)/2}
$$
for every $w \in V_\R$ and $t_1 \gg 1$.
\end{enumerate}
\end{lemma}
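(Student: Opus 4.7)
The plan is to reduce all three bounds to a single structural identity. Using the isomorphism $V^\vee_\R = K \otimes_\Q \R \simeq \R \times \C$ coming from $(\sigma_\R, \sigma_\C)$, decompose $v = (v_\R, v_\C)$ and expand $a(v) = a_1 v_\R + \mathrm{Re}(\overline{a_2} v_\C)$ with $a_1 \in \R$ and $a_2 \in \C$. The dual action on $\Lambda$ satisfies $(\varepsilon a)(v) = a(\varepsilon^{-1} v)$, and since multiplication by $\varepsilon^{-1}$ rescales $v_\R$ by $\varepsilon_\R^{-1}$ and $v_\C$ by $\sigma_\C(\varepsilon)^{-1}$, the $v_\R$-parts cancel in $a(v) - \varepsilon_\R b(v)$ and one finds
\[
a(v) - \varepsilon_\R b(v) = \mathrm{Re}\bigl[\overline{a_2}\,(1 - \varepsilon_\R \sigma_\C(\varepsilon)^{-1})\, v_\C\bigr],
\]
so that $|a(v) - \varepsilon_\R b(v)| \leq C_0\, |v_\C|$ for a constant $C_0 = C_0(a,\varepsilon)$. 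This cancellation is the only arithmetic input; the rest is bookkeeping.

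Granting this, (i) is immediate: from the definition of $H_{a,t_1}$ one has $|v_\C|^2 \leq t_1^{-1} H_{a,t_1}(v)$ and $a(v)^2 \leq t_1^2 H_{a,t_1}(v)$, so $|a(v) - \varepsilon_\R b(v)| \leq C_0\, t_1^{-1/2} H_{a,t_1}(v)^{1/2}$ and $|a(v) + \varepsilon_\R b(v)| \leq C_1\, t_1\, H_{a,t_1}(v)^{1/2}$ for $t_1 \geq 1$; multiplying these bounds on the two factors of $a(v)^2 - \varepsilon_\R^2 b(v)^2$ and including the prefactor $t_1^{-2}$ yields the statement.

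For (ii), I would write
\[
\psi^0(v)_{a,t_1} - \psi^0(v)_{\varepsilon_\R b, t_1} = -t_1^{-1}\bigl(a(v) - \varepsilon_\R b(v)\bigr) e^{-H_{a,t_1}(v)} - t_1^{-1}\varepsilon_\R b(v)\bigl(e^{-H_{a,t_1}(v)} - e^{-H_{\varepsilon_\R b, t_1}(v)}\bigr),
\]
bound the first term using the estimates above and the fact that $x^k e^{-x/2}$ is bounded on $\R_{\geq 0}$, and handle the second term via the elementary inequality $|e^{-x} - e^{-y}| \leq |x - y| \max(e^{-x}, e^{-y})$ combined with (i). A split into regimes $H_{a,t_1}(v) \leq c\, t_1^{3/2}$ (Taylor-expand the exponential difference) and $H_{a,t_1}(v) > c\, t_1^{3/2}$ (both exponentials carry an extra factor smaller than any power of $t_1^{-1}$) absorbs all polynomial prefactors into $e^{-H_{a,t_1}(v)/2}$, giving (ii). For (iii), I would compute $\hat{\psi}^0_{a,t_1}$ explicitly: since $\psi^0(v)_{a,t_1}$ is a linear form times a Gaussian with quadratic form $H_{a,t_1}$, a standard calculation gives $\hat{\psi}^0(w)_{a,t_1} = c_{a,t_1}\, L_{a,t_1}(w)\, e^{-\pi^2 H^*_{a,t_1}(w)}$, where $H^*_{a,t_1}$ is the quadratic form dual to $H_{a,t_1}$, $L_{a,t_1}$ is (a scalar multiple of) the image of $a$ under the duality, and $c_{a,t_1}$ depends on $\det H_{a,t_1}$. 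The key sub-step is a dual version of (i), namely $|H^*_{a,t_1}(w) - H^*_{\varepsilon_\R b, t_1}(w)| \leq A'\, t_1^{-3/2} H^*_{a,t_1}(w)$ together with analogous bounds on $L_{a,t_1}$ and $c_{a,t_1}$, obtained by expanding the perturbed inverse matrix. Once this is in hand, the argument of (ii) transfers verbatim to the Fourier side.

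The main obstacle I anticipate is the bookkeeping in (iii): tracking the constant $c_{a,t_1}$, the linear functional $L_{a,t_1}$, and the dual quadratic form $H^*_{a,t_1}$ through the perturbation $a \leadsto \varepsilon_\R b$ requires careful matrix algebra, and one must verify that the final bound really carries the exponential factor $e^{-H_{a,t_1}(w)/2}$ as stated (possibly absorbing a determinant factor to pass between $H^*$ and $H$ under the fixed identification $V_\R \simeq V_\R^\vee \simeq \R^3$). The geometric content, however, is captured entirely by the single cancellation leading to (i); everything else is uniform control of constants and exponents as $t_1 \to \infty$.
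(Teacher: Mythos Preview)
Your approach is correct and essentially identical to the paper's: the key cancellation $\sigma_\R(v - \varepsilon_\R \varepsilon^{-1} v) = 0$ giving $|a(v) - \varepsilon_\R b(v)| \leq C_0 |\sigma_\C(v)|$ is exactly the paper's input, and your treatment of (i) and (ii) matches theirs line for line. For (iii) the paper streamlines the bookkeeping you worry about by writing $\psi^0(v)_{a,t_1} = \varphi(g_{a,t_1} v)$ for a fixed Schwartz function $\varphi$ and an element $g_{a,t_1} \in G$, so that $\hat{\psi}^0(w)_{a,t_1} = C \det(g_{a,t_1})^{-1} \varphi({}^t g_{a,t_1}^{-1} w)$ with $\det(g_{a,t_1})$ independent of $t_1$, after which the argument of (ii) transfers directly without any explicit matrix inversion.
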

\begin{proof}
Let us first prove (i). We have
\begin{equation} \label{eq:diff_metrics_1}
\left| H_{a,t_1}(v) - H_{\varepsilon_\R b, t_1}(v) \right| = t_1^{-2}|a(v)^2 - \varepsilon_\R^2 b(v)^2|.
\end{equation}
Let $C_0>0$ be such that 
$$
|a(v)| \leq C_0 (\sigma_\R(v)^2+|\sigma_\C(v)|^2)^{1/2}
$$
for all $v \in V^\vee_\R$.
Since $b(v)=(\varepsilon a)(v)=a(\varepsilon^{-1}v)$, we have
\begin{equation*}
\begin{split}
|a(v)-\varepsilon_\R b(v)| &= |a(v- \varepsilon_\R\varepsilon^{-1} v)| \\
& \leq C_0 |\sigma_\C(v- \varepsilon_\R \varepsilon^{-1} v)| \\
& \leq C_0 |\sigma_\C(v)| + C_0 \varepsilon_\R |\sigma_\C(\varepsilon^{-1}v)| \\
& = C_0(1+\varepsilon_\R |\varepsilon_\C|^{-1})|\sigma_\C(v)| \\
& \leq C_0 (1+\varepsilon_\R |\varepsilon_\C|^{-1}) t_1^{-1/2} H_{a,t_1}(v)^{1/2},
\end{split}
\end{equation*}
where the first inequality follows from the fact that $\sigma_\R(v-\varepsilon_\R \varepsilon^{-1} v)=0$. Similarly we have $a(v)^2 \leq t_1^2 H_{a,t_1}(v)$ and hence
\begin{equation*}
\begin{split}
|a(v)+\varepsilon_\R b(v)| &\leq 2|a(v)| + |a(v)-\varepsilon_\R b(v)| \\
& \leq (2 t_1 + C_0(1+\varepsilon_\R |\varepsilon_\C|^{-1})t_1^{-1/2}) H_{t_1}(v)^{1/2}.
\end{split}
\end{equation*}
The bound (i) follows from \eqref{eq:diff_metrics_1} by multiplying the above two inequalities.

Consider now (ii). Using the explicit expression \eqref{eq:psi_0_def}, we find
\begin{equation*}
\begin{split}
|\psi^0(v)_{a,t_1}-\psi^0(v)_{\varepsilon_\R b, t_1}|  \leq  \ & |e^{-H_{a,t_1}(v)} - e^{-H_{\varepsilon_\R b, t_1}(v)}| \cdot t_1^{-1}|\varepsilon_\R b(v)| 
\\ & + e^{-H_{a,t_1}(v)} \cdot t_1^{-1}|a(v)- \varepsilon_\R b(v)|,
\end{split}
\end{equation*}
and so (ii) follows from the inequalities above together with the elementary inequality $|e^x-1| \leq |x| e^{|x|}$ valid for $x \in \R$.

To prove (iii), consider the Schwartz form $\varphi$ on $V^\vee_\R$ defined by $\varphi(v)=v_1 e^{-\pi Q(v)}$ with $Q$ as in \eqref{eq:standard_quadratic_form}. Let $g_{a,t_1} \in G$ be 
such that $\pi^{1/2}(g_{a,t_1}v)_1=t_1^{-1}a(v)$, $\pi^{1/2}(g_{a,t_1}v)_2=t_1^{1/2} \mathrm{Re}(\sigma_\C(v))$ and $\pi^{1/2}(g_{a,t_1}v)_3=t_1^{1/2} \mathrm{Im}(\sigma_\C(v))$. 
Then
$$
\psi^0(v)_{a,t_1}=\varphi(g_{a,t_1}v)
$$
and hence 
$$
\hat{\psi}^0(w)_{a,t_1}=C \det(g_{a,t_1})^{-1} \varphi({^t}g_{a,t_1}^{-1}w)
$$
for some $C \neq 0$.
(Note that $\hat{\varphi}=C \varphi$ and $\det(g_{a,t_1})$ is independent of $t_1$.) Now (iii) can be proved in the same way as (ii).
\end{proof}

\begin{proof}[Proof of Lemma \ref{lemma:zeta_derivative_double_integral}]
For the integrability claim, note that the Mellin transform
$$
\int_0^\infty |F_{\mathfrak{f},\mathfrak{a}}(t_1,t,h,\mathfrak{b})| t^s \frac{dt}{t}
$$
converges absolutely for every $t_1>0$ and every $s \in \C$ by the standard argument using Poisson summation. By Lemma \ref{lemma:Eis_rapid_decrease_cusp} this function decreases rapidly as $t_1 \to 0$. Hence it suffices to prove that it is also rapidly decreasing as $t_1 \to \infty$. For this we split the integral into the domains $(0,1)$  and $(1,\infty)$. For $t>1$, the bound given by part (ii) of Lemma \ref{lemma:arch_bounds_1} shows that
\begin{equation} \label{lemma:arch_bounds_2}
\int_1^\infty |F_{\mathfrak{f},\mathfrak{a}}(t_1,t,h,\mathfrak{b})| t^s \frac{dt}{t} \leq C t_1^{-3/2} \int_1^\infty \sum_{v \in h + \mathfrak{a}^{-1}L} e^{-t^2 H_{a,t_1}(v)/2} t^s \frac{dt}{t}
\end{equation}
for some $C>0$, and so it suffices to show that the integral on the right hand side is bounded for $t_1>1$. To see this, let $\alpha \neq 0$ be such that the restriction of $a-\alpha \sigma_\R$ vanishes on the plane $\ker(\sigma_\C) \subset V_\R^\vee$ and consider the metric
$$
\tilde{H}_{t_1}(v) = t_1^{-2} \alpha^2 \sigma_\R(v)^2 + t_1 |\sigma_\C(v)|^2.
$$
It has the property $\tilde{H}_{t_1}(\varepsilon^{-1} v)=\tilde{H}_{\varepsilon_\R t_1}(v)$, hence the integral
$$
\int_1^\infty \sum_{v \in h + \mathfrak{a}^{-1}L} e^{-t^2 \tilde{H}_{t_1}(v)/2} t^s \frac{dt}{t}
$$
is also invariant upon replacing $t_1$ by $\varepsilon_\R t_1$, making it periodic and in particular bounded as a function of $t_1$. On the other hand, arguing as in the proof of Lemma \ref{lemma:arch_bounds_1} one shows that for some $A>0$
 we have
$$
|H_{a,t_1}(v) - \tilde{H}_{t_1}(v)| < A t_1^{-3/2} \tilde{H}_{t_1}(v)
$$
when $t_1>1$, and hence that $|e^{-H_{a,t_1}(v)}-e^{-\tilde{H}_{t_1}(v)}|< O(t_1^{-3/2}e^{-\tilde{H}_{t_1}(v)/2})$. These bounds imply that the integral on the right hand side of \eqref{lemma:arch_bounds_2} is bounded above by a constant independent of $t_1>1$. For $t<1$ one uses the same argument, having applied Poisson summation first and using part (iii) of Lemma \ref{lemma:arch_bounds_1} instead of (ii).

By \eqref{eq:int_Eisenstein_derivative_zeta} and Lemma \ref{lemma:approx_closed_geodesic_modular_symbol} we have
\begin{equation*}
\begin{split}
\frac{\sqrt{\pi}}{3} \zeta'_{\mathfrak{f},\mathfrak{a}}(\mathfrak{b},0) &= \lim_{T_1 \to \infty} \int_{T_1}^{T_1 \varepsilon_\R^{-1}} \iota_{\langle a \rangle}^* E_{\psi,\mathfrak{a}}(h,\mathfrak{fb}^{-1},0) \\
&= \lim_{T_1 \to \infty} \left[ \int_{T_0}^{T_1 \varepsilon_\R^{-1}} \iota_{\langle a \rangle}^* E_{\psi,\mathfrak{a}}(h,\mathfrak{fb}^{-1},0) - \int_{T_0}^{T_1} \iota_{\langle a \rangle}^* E_{\psi,\mathfrak{a}}(h,\mathfrak{fb}^{-1},0) \right]
\end{split}
\end{equation*}
for any $T_0>0$. By Lemma \ref{lemma:Eis_rapid_decrease_cusp}, one may let $T_0 \to 0$ and hence write
\begin{equation*}
\begin{split}
\frac{\sqrt{\pi}}{3} & \zeta'_{\mathfrak{f},\mathfrak{a}}(\mathfrak{b},0) \\ &= \lim_{T_1 \to \infty} \left[ \int_0^{T_1 \varepsilon_\R^{-1}} \iota_{\langle a \rangle}^* E_{\psi,\mathfrak{a}}(h,\mathfrak{fb}^{-1},0) - \int_0^{T_1} \iota_{\langle a \rangle}^* E_{\psi,\mathfrak{a}}(h,\mathfrak{fb}^{-1},0) \right] \\
&= \lim_{T_1 \to \infty} \int_0^{T_1}\int_0^\infty \left(\Theta_{a,\mathfrak{a}}(t_1 \varepsilon_\R^{-1},t,h,\mathfrak{fb}^{-1}) - \Theta_{a,\mathfrak{a}}(t_1,t,h,\mathfrak{fb}^{-1})\right) \frac{dt_1}{t_1} \frac{dt}{t}.
\end{split}
\end{equation*}
Using $\varepsilon_\R |\varepsilon_\C|^2=1$ and $b=\varepsilon a$, one checks that $\psi^0(\varepsilon^{-1} v)_{a,t_1 \varepsilon_\R^{-1}} = \psi^0(v)_{\varepsilon_\R b, t_1}$ and hence that
$$
\Theta_{a,\mathfrak{a}}(t_1 \varepsilon_\R^{-1}, t, h, \mathfrak{fb}^{-1}) = \Theta_{\varepsilon_\R b,\mathfrak{a}}(t_1,t,h, \mathfrak{fb}^{-1})
$$
since $\varepsilon$ stabilizes the coset $h+L$, proving the identity in the statement.
\end{proof}

We will prove Theorem \ref{T:KLintro} by showing that the double integral in the lemma above can be expressed in terms of the elliptic gamma function.

\subsubsection{Relation with the elliptic gamma function} Let us rewrite the integrand $F_{\mathfrak{f},\mathfrak{a}}$ in Lemma \ref{lemma:zeta_derivative_double_integral} as follows: define
\begin{multline*} 
f_{a,\mathfrak{a}}(t_1,t,v) = \sum_{j=0}^{\mathrm{N}(\mathfrak{a})-1} \left(  \psi^0 ( t (v + j\frac{\gamma}{\mathrm{N}(\mathfrak{a})}))_{\varepsilon_\R b,t_1} - \psi^0 (t (v + j \frac{\gamma}{\mathrm{N}(\mathfrak{a})}))_{a,t_1} \right) \\
- \mathrm{N}(\mathfrak{a}) \left(  \psi^0(tv)_{\varepsilon_\R b, t_1} - \psi^0(tv)_{a,t_1}  \right).
\end{multline*}
Then
\begin{equation*}
\begin{split}
F_{\mathfrak{f},\mathfrak{a}}(t_1,t,h,\mathfrak{b}) &= \sum_{v \in h + L} f_{a,\mathfrak{a}}(t_1,t,v) \\
&= \sum_{w \in L/\Z \gamma} \sum_{k \in \Z} f_{a,\mathfrak{a}}(t_1,t,h+w+k\gamma).
\end{split}
\end{equation*}

\begin{lemma} \label{lemma:exchange_integral_and_sum}
\begin{equation*}
\begin{split}
\int_0^\infty \int_0^\infty & F_{\mathfrak{f},\mathfrak{a}}(t_1,t,h,\mathfrak{b}) \frac{dt_1}{t_1} \frac{dt}{t} \\ &= \sum_{w \in L/\Z \gamma} \int_0^\infty \int_0^\infty \left( \sum_{k \in \Z } f_{a,\mathfrak{a}}(t_1,t,h+w+k\gamma) \right) \frac{dt_1}{t_1} \frac{dt}{t}.
\end{split}
\end{equation*}
\end{lemma}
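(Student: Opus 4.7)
The goal is to justify interchanging the (infinite) sum over $w \in L/\Z\gamma$ with the double integral. By Tonelli's theorem, it suffices to establish the absolute convergence
\begin{equation*}
\sum_{w \in L/\Z\gamma} \int_0^\infty \int_0^\infty \left|\sum_{k \in \Z} f_{a,\mathfrak{a}}(t_1,t,h+w+k\gamma)\right|\frac{dt_1}{t_1}\frac{dt}{t} < \infty,
\end{equation*}
after which Fubini yields the identity. I would prove this bound by refining the estimates already developed in the proof of Lemma \ref{lemma:zeta_derivative_double_integral}, now running them termwise in $w$.

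The starting observation is that $\gamma \in \ker(a)\cap\ker(b)$, so $a(h+w+k\gamma)=a(h+w)$ and $b(h+w+k\gamma)=b(h+w)$ for all $k \in \Z$. By the explicit formula \eqref{eq:psi_0_def}, the dependence of $\psi^0(t(h+w+k\gamma))_{a,t_1}$ on $k$ enters only through the Gaussian factor $e^{-t_1 t^2|\sigma_\C(h+w+k\gamma)|^2}$, and similarly for the $\varepsilon_\R b$ version. Combining this with the $\mathfrak{a}$--distribution (absorbing the $j$-sum and the factor $\mathrm{N}(\mathfrak{a})$ into a single sum over $\Z \gamma/\mathrm{N}(\mathfrak{a})$ in $\mathfrak{a}^{-1}L$), I can write, setting $G_w(t_1,t) := \sum_{k \in \Z} f_{a,\mathfrak{a}}(t_1,t,h+w+k\gamma)$,
\begin{equation*}
G_w(t_1,t) = P_a(w;t,t_1)\bigl(\Theta_{\varepsilon_\R b}(w;t,t_1) - \Theta_a(w;t,t_1)\bigr),
\end{equation*}
where $P_a(w;t,t_1)$ is an explicit Gaussian factor depending only on $a(h+w)$, and $\Theta_a, \Theta_{\varepsilon_\R b}$ are one-dimensional Jacobi-type theta series on $\C$ in the direction $\sigma_\C(\gamma)$, centered at $\sigma_\C(h+w)$.

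Next I would obtain $w$-uniform bounds on $G_w$. For $t>1$ the estimate is direct: Lemma \ref{lemma:arch_bounds_1}(ii) bounds the difference $\psi^0_{\varepsilon_\R b,t_1}-\psi^0_{a,t_1}$ pointwise by $O(t_1^{-3/2}e^{-H_{a,t_1}/2})$, and the one-dimensional theta series in $\sigma_\C(\gamma)$ is uniformly bounded. For $t<1$ I would apply Poisson summation in $k$ (as in the proof of Lemma \ref{lemma:Eis_rapid_decrease_cusp}), using Lemma \ref{lemma:arch_bounds_1}(iii) and the vanishing of the restriction of $\hat{\varphi}_{h,L,\mathfrak{a}}$ to $\ker(\gamma)$ to control the dangerous zero-frequency term. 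In either regime the resulting bound is Gaussian in the two transverse coordinates $a(w)$ and $\sigma_\C(w)$ on the rank-two lattice $L/\Z\gamma$, so summing over $w$ produces a bound independent of $w$ (up to a harmless factor) that inherits the rapid decay at the cusps $t_1 \to 0$ and $t_1 \to \infty$ established in the proof of Lemma \ref{lemma:zeta_derivative_double_integral}. This gives the desired absolute integrability.

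The main obstacle is that Lemma \ref{lemma:zeta_derivative_double_integral} proved integrability for the total function $F_{\mathfrak{f},\mathfrak{a}}$, where cancellations between different $w$-classes can be invoked; here the bounds must hold termwise, with no cross-class cancellation. Fortunately the two sources of decay, namely the $\mathfrak{a}$-smoothing (killing the zero-frequency Fourier mode transverse to $\gamma$) and the difference $\psi^0_{\varepsilon_\R b,t_1}-\psi^0_{a,t_1}$ (providing the $t_1^{-3/2}$ gain together with a Gaussian in $H_{a,t_1}$), both operate class by class, so the argument of Lemma \ref{lemma:zeta_derivative_double_integral} adapts with only cosmetic changes, and Fubini then concludes.
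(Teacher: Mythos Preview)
Your plan has a genuine gap in the $t<1$ regime. The factorization you write down is essentially correct: since $a(\gamma)=b(\gamma)=0$, the $k$-dependence enters only through the $\sigma_\C$-Gaussian, so after Poisson summation in $k$ the $\mathfrak{a}$-smoothing indeed kills the zero Fourier mode and one is left with (in the paper's variables $u,v$)
\[
|G_w|\ \le\ C\, v\bigl(|a(w)|e^{-v^2 a(w)^2}+|\varepsilon_\R b(w)|e^{-v^2\varepsilon_\R^2 b(w)^2}\bigr)\, u^{1/2}\sum_{N\nmid n} e^{-\pi(n^2 u + y_w^2/u)},
\]
with $y_w=\mathrm{Im}\bigl(w(\mathbf{x})/\gamma(\mathbf{x})\bigr)$. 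Integrating the $v$-factor gives $\le \sqrt{\pi}$ and integrating the $u$-factor gives $|n|^{-1}e^{-2\pi|n||y_w|}$, so the termwise bound is
\[
\int_0^\infty\!\!\int_0^\infty |G_w|\,\frac{dt_1}{t_1}\frac{dt}{t}\ \le\ C'\bigl(-\log(1-e^{-2\pi|y_w|})\bigr).
\]
This is \emph{not} Gaussian in any transverse coordinate on $L/\Z\gamma$: writing $w=m\alpha+n'\beta$ one has $y_w=m\,\mathrm{Im}(\tau)+n'\,\mathrm{Im}(\sigma)$, and since $\mathrm{Im}(\tau)/\mathrm{Im}(\sigma)$ is (generically) irrational the values $|y_w|$ come arbitrarily close to $0$ infinitely often. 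For each $n'$ there is an $m$ with $|y_w|\lesssim 1/|n'|$, so the bad terms contribute at least $\sum_{n'}\log|n'|$, which diverges. Thus $\sum_w \int\!\!\int|G_w|=\infty$ and Tonelli does not apply.

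The paper circumvents this precisely by \emph{not} taking absolute values termwise on the $t<1$ piece. Instead it performs full three-variable Poisson summation, interchanges on the dual side (where the required bounds do hold), and then applies two-variable Poisson summation back. The validity of that last step requires $\sum_\lambda|\hat g(\lambda)|<\infty$, i.e.\ summability of the \emph{integrals} $\bigl|\int_0^1\!\int_0^\infty G_w\bigr|$ rather than of $\int_0^1\!\int_0^\infty|G_w|$; the explicit computation following the lemma shows that after the $v$-integration one picks up the factor $\chi(w)=\tfrac12(\mathrm{sgn}\,a(w)-\mathrm{sgn}\,b(w))$, which restricts the support to the cones $\overline{C}_{\texttt{+-}}\cup\overline{C}_{\texttt{-+}}$ on which $|y_w|$ grows linearly and the sum converges. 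Your $t>1$ argument is fine and matches the paper's, but for $t<1$ this cancellation is essential and your purely absolute-value approach cannot see it.
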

\begin{proof}
Let
\begin{equation*} 
G_{\mathfrak{f},\mathfrak{a}}(t_1,t,h,\mathfrak{b}) = \sum_{w \in h + L}  \left| f_{a,\mathfrak{a}}(t_1,t,v) \right|.
\end{equation*}
The arguments in the proofs of Lemmas \ref{lemma:Eis_rapid_decrease_cusp} and \ref{lemma:zeta_derivative_double_integral} show that the integral $\smallint_1^\infty G_{\mathfrak{f},\mathfrak{a}}(t_1,t,h,\mathfrak{b}) \tfrac{dt}{t}$ is $O(t_1^{-3/2})$ as $t_1 \to \infty$ and is rapidly decreasing as $t_1 \to 0$. Hence
\begin{equation}
\label{lemma:exchange_sum_and_integral_1}
\int_0^\infty \int_1^\infty G_{\mathfrak{f},\mathfrak{a}}(t_1,t,h,\mathfrak{b}) \frac{dt}{t} \frac{dt_1}{t_1} < \infty,
\end{equation}
which implies that
\begin{equation*}
\begin{split}
\int_1^\infty \int_0^\infty & F_{\mathfrak{f},\mathfrak{a}}(t_1,t,h,\mathfrak{b}) \frac{dt_1}{t_1} \frac{dt}{t} \\ &= \sum_{w \in L/\Z \gamma} \int_1^\infty \int_0^\infty \left( \sum_{k \in \Z } f_{a,\mathfrak{a}}(t_1,t,h+w+k\gamma) \right) \frac{dt_1}{t_1} \frac{dt}{t}.
\end{split}
\end{equation*}
To prove the same identity with $\smallint_1^\infty \tfrac{dt}{t}$ replaced by $\smallint_0^1 \tfrac{dt}{t}$, let us first rewrite $F_{\mathfrak{f},\mathfrak{a}}$ using Poisson summation. To this end, choose $\alpha$ and $\beta$ as in Section \ref{subsection:elliptic_gamma_function} and let $\tau=\alpha(\mathbf{x})/\gamma(\mathbf{x})$ and $\sigma=\beta(\mathbf{x})/\gamma(\mathbf{x})$. First define a Schwartz function $\varphi$ on $\R^3$ by
$$
\varphi(x) = -\sqrt{\pi} x_1 e^{-\pi(x_1^2+ x_2^2+x_3^2)}, \quad x=(x_1,x_2,x_3) \in \R^3
$$
and let
$$
A_a = \pi^{-1/2}|\gamma(\mathbf{x})| \begin{pmatrix} \alpha(a)/|\gamma(\mathbf{x})| & 0 & 0 \\ \mathrm{Im}(\tau) & \mathrm{Im}(\sigma) & 0 \\ \mathrm{Re}(\tau) & \mathrm{Re}(\sigma) 
& 1 \end{pmatrix}, \quad m(t_1) = \begin{pmatrix} t_1^{-1} & 0 & 0 \\ 0 & t_1^{1/2} & 0 \\ 0 & 0 & t_1^{1/2} \end{pmatrix}.
$$
Note that $A_a$ is invertible and $m(t_1)$ has unit determinant. Then we have
$$
\psi^0(x_1 \alpha + x_2 \beta + x_3 \gamma)_{a,t_1} = \varphi(m(t_1)A_a x).
$$
Since $\varphi$ is (up to multiplication by constants) its own Fourier transform, this shows that the Fourier transform of $x \mapsto \psi^0(t(x_1 \alpha + x_2 \beta + x_3 \gamma))_{a,t_1}$ is
$$
x \mapsto \frac{1}{\det(A_a) t^3} \varphi(t^{-1}m(t_1^{-1}){^t}A_a^{-1} x).
$$
Similar considerations apply with $a$ replaced by $\varepsilon_\R b$ (note that, since the metrics $H_{\varepsilon_\R b,t_1}(\varepsilon v)=H_{a,t_1 \varepsilon_\R^{-1}}(v)$, we have $\det(A_a)=\det(A_{\varepsilon_\R b})$). Hence writing $(\alpha_1,\alpha_2,\alpha_3)=(\alpha,\beta,\gamma)$ and setting 
$$
\varphi_{\mathfrak{a},h}(m_1,m_2,m_3) = \varphi_{h,L,\mathfrak{a}}(\Sigma m_i \alpha_i) \in \mathcal{S}(\mathbf{A}_f^3)
$$
with $\varphi_{h,L,\mathfrak{a}}$ defined by \eqref{eq:finite_Schwartz_smoothing}, Poisson summation reads
\begin{equation*}
\begin{split}
& - F_{\mathfrak{f},\mathfrak{a}}(t_1,t,h,\mathfrak{b}) \\ &= \sum_{m \in \Q^3} \varphi_{\mathfrak{a},h}(m)  \left(\psi^0\left(\sum t m_i \alpha_i \right)_{a,t_1} -\psi^0\left( \sum t m_i \alpha_i \right)_{\varepsilon_\R b,t_1} \right)\\
&= \frac{\det(A_a)^{-1}}{C  t^3} \sum_{m \in \Q^3} \hat{\varphi}_{\mathfrak{a},h} (m) \left(\varphi(t^{-1} m(t_1^{-1}){^t}A_a^{-1} m) - \varphi(t^{-1} m(t_1^{-1}) {^t}A_{\varepsilon_\R b}^{-1} m)   \right)
\end{split}
\end{equation*}
for some constant $C \neq 0$. Now the Schwartz function $\varphi_{h,L,\mathfrak{a}}$ is a linear combination of functions of the form $\varphi_{\langle \alpha, \beta \rangle} \otimes \varphi_{\langle \gamma \rangle}$, where $\varphi_{\langle \alpha, \beta \rangle} \in \mathcal{S}(\langle \alpha, \beta \rangle \otimes_\Q \mathbf{A}_f)$ and $\varphi_{\langle \gamma \rangle} \in \mathcal{S}(\Q \gamma \otimes_\Q \mathbf{A}_f)$, and where the Fourier transform of $\varphi_{\langle \gamma \rangle}$ satisfies $\hat{\varphi}_{\langle \gamma \rangle}(0)=0$. Using the bounds in Lemma \ref{lemma:arch_bounds_1} we obtain
\begin{equation*}
\begin{split}
\frac{1}{C \det(A_a) t^3} \sum_{m \in \Q^3} & |\hat{\varphi}_{\mathfrak{a},h} (m) \left(\varphi(t^{-1} m(t_1^{-1}){^t}A_a^{-1} m) - \varphi(t^{-1} m(t_1^{-1}) {^t}A_{\varepsilon_\R b}^{-1} m)   \right)| \\
& \leq C_1 \det(A_a^{-1}) t^{-3} t_1^{-3/2} \sum_{\substack{m \in \Z^3 \\ \mathrm{N}(\mathfrak{a}) \nmid m_3}} \varphi^0(C_2 t^{-1} m(t_1^{-1}) {^t}A_a^{-1} m),
\end{split}
\end{equation*}
where $C_1, C_2 >0$ and $\varphi^0(x)=e^{-x_1^2-x_2^2-x_3^2}$ is the standard Gaussian. Since the sum contains no terms with $m_3 = 0$, as in the proof of Lemma \ref{lemma:Eis_rapid_decrease_cusp} one shows that the integral $\smallint_0^1 \tfrac{dt}{t}$ of this expression is rapidly decreasing as $t_1 \to 0$. Applying Poisson summation again to this last expression shows that it equals
$$
C' t_1^{-3/2} \sum_{m_i \in \Q^3} \varphi_f(m) \varphi^0(C_2^{-1} t m(t_1) A_a m).
$$
for an appropriate Schwartz function $\varphi_f \in \mathcal{S}(\mathbf{A}_f^3)$. The argument in the proof of Lemma \ref{lemma:zeta_derivative_double_integral} shows that the integral $\smallint_0^1 \tfrac{dt}{t}$ of the sum in this expression is bounded for $t_1>1$, and so the same integral of the whole expression is $O(t_1^{-3/2})$ as $t_1 \to \infty$. Overall, these estimates imply that we can exchange sum and integral and so write 
\begin{equation*}
\begin{split}
& -C \det(A_a)  \int_0^1 \int_0^\infty  F_{\mathfrak{f},\mathfrak{a}}(t_1,t,h,\mathfrak{b}) \frac{dt_1}{t_1} \frac{dt}{t} \\
&= \sum_{m_1,m_2 \in \Q} \int_0^1 \int_0^\infty \sum_{m_3 \in \Q} \hat{\varphi}_{\mathfrak{a},h}(m) \\
& \qquad \qquad \qquad \qquad \times \left(\varphi(t^{-1} m(t_1^{-1}){^t}A_a^{-1} m) - \varphi(t^{-1} m(t_1^{-1}) {^t}A_{\varepsilon_\R b}^{-1} m)   \right) \frac{dt_1}{t_1} \frac{dt}{t^4}.
\end{split}
\end{equation*}
To reach the expression in the statement we apply Poisson summation again, this time to the right hand side of this equation. That is, define a function $g:\R^2 \to \C$ by
\begin{equation*}
\begin{split}
g(x_1,x_2) = &\frac{1}{C \det(A_a)}   \int_0^1 \int_0^\infty \sum_{m_3 \in \Q} \hat{\varphi}_{\langle \gamma \rangle }(m_3 \gamma) \\
&\times\left( \varphi(t^{-1} m(t_1^{-1}) \cdot {^t}A_a^{-1} \cdot {^t}(x_1,x_2,m_3)) \right. \\ & \left. \qquad  - \varphi(t^{-1} m(t_1^{-1}) \cdot {^t}A_{\varepsilon_\R b}^{-1} \cdot {^t}(x_1,x_2,m_3))   \right) \frac{dt_1}{t_1} \frac{dt}{t^4}.
\end{split}
\end{equation*}
Our goal is to show that Poisson summation applies to $\Sigma_{m_1,m_2 \in \Q} \hat{\varphi}_{\langle \alpha, \beta \rangle}(m_1 \alpha + m_2 \beta) g(m_1,m_2)$. To see this, note that the estimates above show that $g$ is a continuous and integrable function on $\R^2$ and that for any $l \in \Q^2$ and lattice $L_1$ in $\Q^2$, the series $G(x)=\Sigma_{m \in l+L_1} g(x+m)$ converges absolutely to a continuous function $G$ on $\R^2/L_1$. The Fourier coefficients of $G$ are $G(\lambda) = \hat{g}(\lambda)e^{2\pi i \lambda(l)}$, where $\lambda \in L_1^\vee=\mathrm{Hom}(L_1,\Z)$ and $\hat{g}$ denotes the Fourier transform of $g$. Since $G$ is continuous, Poisson summation (which is equivalent to the pointwise convergence of the Fourier series of $G$ at $x=0$) holds if we can show that $\Sigma_{\lambda \in L_1^\vee}|\hat{g}(\lambda)| < \infty$. 
This last convergence follows from \eqref{lemma:exchange_sum_and_integral_1} and the explicit computation below of the right hand side of the Lemma's statement (cf. sentence after \eqref{eq:def_I_-+}).
\end{proof}

We can now explain how this lemma implies Theorem \ref{T:KLintro}. By Lemma \ref{lemma:zeta_derivative_double_integral}, it suffices to compute the integrals on the right hand side; for this it will be convenient to change variables and set 
\begin{equation} \label{eq:change_of_vars}
u = \pi |\gamma(\mathbf{x})|^{-2} t_1^{-1}t^{-2}, \quad v= t_1^{-1} t.
\end{equation}
Then $(t_1 t)^{-1} dt_1 dt = 3^{-1} (uv)^{-1} dv du$ and so for fixed $w \in L/\Z \gamma$ we have
\begin{equation*}
\begin{split}
\int_0^\infty \int_0^\infty & \left( \sum_{k \in \Z} f_{a,\mathfrak{a}}(t_1,t,h+w+k \gamma) \right) \frac{dt_1}{t_1} \frac{dt}{t} \\ &= \frac{1}{3} \int_0^\infty \int_0^\infty \left( \sum_{k \in \Z } f^\sharp_{a,\mathfrak{a}}(u,v,h+w +k\gamma) \right) \frac{dv}{v} \frac{du}{u},
\end{split}
\end{equation*}
where we write
\begin{multline*} 
f^\sharp_{a,\mathfrak{a}}(u,v,w) = \sum_{j=0}^{\mathrm{N}(\mathfrak{a})-1} \left(  \psi^\sharp(u,v, w + j \frac{\gamma}{\mathrm{N}(\mathfrak{a})})_{\varepsilon_\R b,t_1} - \psi^\sharp(u,v, w + j \frac{\gamma}{\mathrm{N}(\mathfrak{a})})_{a,t_1} \right) \\
- \mathrm{N}(\mathfrak{a}) \left( \psi^\sharp(u,v,w)_{\varepsilon_\R b, t_1} - \psi^\sharp(u,v,w)_{a,t_1}  \right)
\end{multline*}
with
\begin{equation*}
\begin{split}
\psi^\sharp(u,v,w)_{a,t_1} & = -v a(w) e^{-v^2 a(w)^2} e^{-\pi u^{-1} |w(\mathbf{x})/\gamma(\mathbf{x})|^2} \\
\psi^\sharp(u,v,w)_{\varepsilon_\R b,t_1} & = -v \varepsilon_\R b(w) e^{-v^2 \varepsilon_\R^2 b(w)^2} e^{-\pi u^{-1} |w(\mathbf{x})/\gamma(\mathbf{x})|^2}.
\end{split}
\end{equation*}
By Poisson summation we have
\begin{equation*} 
\begin{split}
\sum_{k \in \Z} e^{-\pi u^{-1}|k+w(\mathbf{x})/\gamma(\mathbf{x})|^2} &=
 e^{-\pi u^{-1} \mathrm{Im}(w(\mathbf{x})/\gamma(\mathbf{x}))^2} \sum_{k \in \Z} e^{-\pi u^{-1} (k+\mathrm{Re}(w(\mathbf{x})/\gamma(\mathbf{x})))^2} \\
 &= u^{1/2} e^{-\pi u^{-1} \mathrm{Im}(w(\mathbf{x})/\gamma(\mathbf{x}))^2} \sum_{k \in \Z} e^{2\pi i k \mathrm{Re}(w(\mathbf{x})/\gamma(\mathbf{x}))} e^{-\pi k^2 u}.
\end{split}
\end{equation*}
Since $h$ is a rational multiple of $\gamma$ and hence $h(\mathbf{x})/\gamma(\mathbf{x}) \in \R$, this gives
\begin{multline*} 
\sum_{k \in \Z} f^\sharp_{a,\mathfrak{a}}(u,v,h+w+k \gamma) = -\mathrm{N}(\mathfrak{a}) (va(w) e^{-v^2 a(w)^2} - v \varepsilon_\R  b(w) e^{-v^2 \varepsilon_\R^2 b (w)^2} ) \\ \times u^{1/2} \sum_{\substack{k \in \Z \\ \mathrm{N}(\mathfrak{a}) \nmid k}} e^{2\pi i k \mathrm{Re}((h(\mathbf{x})+w(\mathbf{x}))/\gamma(\mathbf{x}))} e^{-\pi (k^2 u + \mathrm{Im}(w(\mathbf{x})/\gamma(\mathbf{x}))^2 u^{-1})}. 
\end{multline*}
Since
$$
\int_0^\infty y e^{-v^2 y^2} dv = \mathrm{sgn}(y) \int_0^\infty e^{-x^2} dx = \frac{\sqrt{\pi}}{2} \mathrm{sgn}(y),
$$
many of the terms in the sum in Lemma \ref{lemma:exchange_integral_and_sum} cancel: the only non-zero terms are those with $
\mathrm{sgn}(a(w)) \neq \mathrm{sgn}(b(w)).
$
Let us write
\begin{equation}
\begin{split}
\overline{C}_{\texttt{+-}}=\overline{C_{\texttt{+-}}(a,b)}=\{\delta \in L \, | \, \delta(a) \geq 0, \ \delta(b) \leq 0 \} \\
\overline{C}_{\texttt{-+}}=\overline{C_{\texttt{-+}}(a,b)}=\{\delta \in L \, | \, \delta(a) \leq 0, \ \delta(b) \geq 0 \} \\
\end{split}
\end{equation}
for the closures of the cones $C_{\texttt{+-}}(a,b)$ and $C_{\texttt{-+}}(a,b)$, and define the function
$$
\chi(w)=(\mathrm{sgn}(a(w))-\mathrm{sgn}(b(w)))/2
$$
(with the convention that $\mathrm{sgn}(0)=0$), which has support in $\overline{C}_{\texttt{+-}}\cup \overline{C}_{\texttt{-+}}$. Then the sum in Lemma \ref{lemma:exchange_integral_and_sum} may be written as $I_{\texttt{+-}}(h)+I_{\texttt{-+}}(h)$, where
\begin{multline} \label{eq:def_I_+-}
I_{\texttt{+-}}(h) = -\frac{\sqrt{\pi}}{3} \mathrm{N}(\mathfrak{a})  \sum_{w \in \overline{C}_{\texttt{+-}}/\Z \gamma} \chi(w) \sum_{\substack{k \in \Z \\ \mathrm{N}(\mathfrak{a}) \nmid k}}  e^{2\pi i k \mathrm{Re}((h(\mathbf{x})+w(\mathbf{x}))/\gamma(\mathbf{x}))} \\  \times \int_0^\infty e^{-\pi (k^2 u + \mathrm{Im}(w(\mathbf{x})/\gamma(\mathbf{x}))^2 u^{-1})} \frac{du}{u^{1/2}},
\end{multline}
\begin{multline} \label{eq:def_I_-+}
I_{\texttt{-+}}(h) = -\frac{\sqrt{\pi}}{3} \mathrm{N}(\mathfrak{a})  \sum_{w \in  \overline{C}_{\texttt{-+}}/\Z \gamma} \chi(w)  \sum_{\substack{k \in \Z \\ \mathrm{N}(\mathfrak{a}) \nmid k}}  e^{2\pi i k \mathrm{Re}((h(\mathbf{x})+w(\mathbf{x}))/\gamma(\mathbf{x}))} \\  \times \int_0^\infty e^{-\pi (k^2 u + \mathrm{Im}(w(\mathbf{x})/\gamma(\mathbf{x}))^2 u^{-1})} \frac{du}{u^{1/2}}.
\end{multline}
Note that the above infinite sums converge absolutely since the condition $\mathbf{x} \in U_a \cap U_b$ guarantees the function $w \mapsto |\mathrm{Im}(w(\mathbf{x})/\gamma(\mathbf{x}))|$ grows at least linearly on the cones $\overline{C}_{\texttt{+-}}$ and $\overline{C}_{\texttt{-+}}$.

Let us write $e(z)=e^{2\pi i z}$. Since
$$
\int_0^\infty e^{-\pi (k^2 u + y^2 u^{-1})} \frac{du}{u^{1/2}} = K_{1/2}(\pi^{1/2}|k|,\pi^{1/2}|y|) = \frac{1}{|k|}e^{-2\pi |ky|},
$$
(cf. \cite[Ch. 20, \textsection 3]{LangElliptic}), we have
\begin{equation*}
\begin{split}
& e^{2\pi i k \mathrm{Re}((h(\mathbf{x})+w(\mathbf{x}))/\gamma(\mathbf{x}))} \int_0^\infty e^{-\pi (k^2 u + \mathrm{Im}(w(\mathbf{x})/\gamma(\mathbf{x}))^2 u^{-1})} \frac{du}{u^{1/2}} \\
& \qquad = \left\{ \begin{array}{cc}\frac{1}{|k|}e\left(k \frac{h(\mathbf{x})+w(\mathbf{x})}{\gamma(\mathbf{x})}\right), & \text{ if } k\, \mathrm{Im}(w(\mathbf{x})/\gamma(\mathbf{x}))>0, \\ \frac{1}{|k|}e(k\overline{(h(\mathbf{x})+w(\mathbf{x}))/\gamma(\mathbf{x})}), & \text{ if } k\, \mathrm{Im}(w(\mathbf{x})/\gamma(\mathbf{x}))<0.  \end{array}  \right.
\end{split}
\end{equation*}
Since $\mathrm{Im}(w(\mathbf{x})/\gamma(\mathbf{x}))$ is negative for $w \in C_{\texttt{+-}}(a,b)$ and positive for $w \in C_{\texttt{-+}}(a,b)$, substituting in \eqref{eq:def_I_+-} and \eqref{eq:def_I_-+} we obtain
\begin{equation*}
\begin{split}
I_{\texttt{+-}}(h)=\frac{\sqrt{\pi}}{3} \sum_{w \in \overline{C}_{\texttt{+-}}/\Z \gamma} \chi(w) \sum_{\substack{k \geq 1 \\ \mathrm{N}(\mathfrak{a}) \nmid k}} -  \frac{\mathrm{N}(\mathfrak{a})}{k}( & e\left(k \frac{\overline{h(\mathbf{x})+w(\mathbf{x})}}{\overline{\gamma(\mathbf{x})}} \right) \\ & + e\left(-k \frac{h(\mathbf{x})+w(\mathbf{x})}{\gamma(\mathbf{x})} \right))
\end{split}
\end{equation*}
and
\begin{equation*}
\begin{split}
I_{\texttt{-+}}(h)=\frac{\sqrt{\pi}}{3} \sum_{w \in \overline{C}_{\texttt{-+}}/\Z \gamma} \chi(w) \sum_{\substack{k \geq 1 \\ \mathrm{N}(\mathfrak{a}) \nmid k}}  - \frac{\mathrm{N}(\mathfrak{a})}{k} ( & e\left(-k \frac{\overline{h(\mathbf{x})+w(\mathbf{x})}}{\overline{\gamma(\mathbf{x})}} \right) \\ &  + e\left(k \frac{h(\mathbf{x})+w(\mathbf{x})}{\gamma(\mathbf{x})} \right)) .
\end{split}
\end{equation*}
Since $\overline{e(z)}=e(-\overline{z})$, the identity $\Sigma_{k \geq 1, N \nmid k} f(k)=\Sigma_{k \geq 1}(f(k)-f(Nk))$ applied to the inner sum shows that (we abbreviate $N=\mathrm{N}(\mathfrak{a})$)
\begin{equation*} 
I_{\texttt{+-}}(h) = \frac{\sqrt{\pi}}{3} \sum_{w \in \overline{C}_{\texttt{+-}}/\Z \gamma} \chi(w) \log\left|\frac{(1-e(-(h(\mathbf{x})+w(\mathbf{x}))/\gamma(\mathbf{x})))^N}{1-e(-
N(h(\mathbf{x})+w(\mathbf{x}))/\gamma(\mathbf{x}))} \right|^2
\end{equation*}
and
\begin{equation*} 
I_{\texttt{-+}}(h) = \frac{\sqrt{\pi}}{3} \sum_{w \in \overline{C}_{\texttt{-+}}/\Z \gamma} \chi(w) \log\left|\frac{(1-e((h(\mathbf{x})+w(\mathbf{x}))/\gamma(\mathbf{x})))^N}{1-e(
N(h(\mathbf{x})+w(\mathbf{x}))/\gamma(\mathbf{x}))} \right|^2.
\end{equation*}

Replacing $w$ by $-w$ and using that $\chi$ is odd we may rewrite this as
\begin{equation} \label{eq:int_I_+-_3}
I_{\texttt{+-}}(h) = -\frac{\sqrt{\pi}}{3} \sum_{w \in \overline{C}_{\texttt{-+}}/\Z \gamma} \chi(w) \log\left|\frac{(1-e((-h(\mathbf{x})+w(\mathbf{x}))/\gamma(\mathbf{x})))^N}{1-e(
(-h(\mathbf{x})+w(\mathbf{x}))/(\gamma(\mathbf{x})/N))} \right|^2
\end{equation}
and
\begin{equation} \label{eq:int_I_-+_3}
I_{\texttt{-+}}(h) = -\frac{\sqrt{\pi}}{3} \sum_{w \in \overline{C}_{\texttt{+-}}/\Z \gamma} \chi(w) \log\left|\frac{(1-e(-(-h(\mathbf{x})+w(\mathbf{x}))/\gamma(\mathbf{x})))^N}{1-e(
-(-h(\mathbf{x})+w(\mathbf{x}))/(\gamma(\mathbf{x})/N))} \right|^2.
\end{equation}

Let us write $1_X$ for the indicator function of a set $X$. Then 
\begin{equation}
\begin{split}
\chi|_{\overline{C}_{\texttt{+-}}}  &= 1_{ C_{\texttt{+-}}(a,b)} - \frac{1}{2} \cdot 1_{\overline{C}_{\texttt{+-}}\cap H(b)} + \frac{1}{2} \cdot 1_{\overline{C}_{\texttt{+-}} \cap H(a)} \\
\chi|_{\overline{C}_{\texttt{-+}}}  &= -1_{ C_{\texttt{-+}}(a,b)} + \frac{1}{2} \cdot 1_{\overline{C}_{\texttt{-+}}\cap H(a)} - \frac{1}{2} \cdot 1_{\overline{C}_{\texttt{-+}} \cap H(b)},
\end{split}
\end{equation}
and so adding \eqref{eq:int_I_+-_3} and \eqref{eq:int_I_-+_3} gives
\begin{equation} \label{eq:log_gamma_plus_bdy_term}
\begin{split}
\frac{3}{\sqrt{\pi}} (I_{\texttt{+-}}(h)+I_{\texttt{-+}}(h)) &= \log \left|\frac{\Gamma_{a,b}(h(\mathbf{x}),\mathbf{x};\mathfrak{a}^{-1}L)}{\Gamma_{a,b}(h(\mathbf{x}),\mathbf{x};L)^N}
\right|^2 + \text{ boundary term},
\end{split}
\end{equation}
where the boundary term arises from vectors $w$ lying on the boundary of the cone $\overline{C}_{\texttt{+-}} \cup \overline{C}_{\texttt{-+}}$, i.e. on $H(a)$ or $H(b)$. It remains to show that this boundary term vanishes. To see this, let us pick $\alpha \in H(b)$ such that $\alpha(a)>0$ and $L \cap H(b)=\Z \alpha + \Z \gamma$. Then $(\gamma,\alpha)$ is an oriented basis of $H(b)$ and so
$
\tau := \alpha(\mathbf{x})/\gamma(\mathbf{x})
$
has negative imaginary part. Let us write $q_z=e^{2 \pi i z}$ and $z_0= h(\mathbf{x})/\gamma(\mathbf{x})=n/q$ (where $n \in \Z$ is such that $\lambda=n\gamma$) and define
\begin{equation} \label{eq:def_theta_0}
\theta_0(z,-\tau) = \prod_{n \geq 0} (1-q_z q_{-\tau}^{n})(1-q_z^{-1}q_{-\tau}^{n+1}).
\end{equation}
The contributions to the boundary term coming from the sums over $\overline{C}_{\texttt{-+}} \cap H(b)$ and $\overline{C}_{\texttt{+-}} \cap H(b)$ are respectively
\begin{equation*}
\begin{split}
-\frac{1}{2} \sum_{\substack{w \in (\overline{C}_{\texttt{-+}} \cap H(b))/\Z \gamma \\ w \neq 0}} & \log\left|\frac{(1-e((-h(\mathbf{x})+w(\mathbf{x}))/\gamma(\mathbf{x})))^N}{1-e(
(-h(\mathbf{x})+w(\mathbf{x}))/(\gamma(\mathbf{x})/N))} \right|^2 
\\ & = - \sum_{n \geq 1} \log \left|\frac{(1-q_{z_0}^{-1}q_{-\tau}^{n})^N}{1-q^{-1}_{Nz_0}q^{n}_{-N\tau}} \right|
\end{split}
\end{equation*}
(we have omitted $w =0$ because it cancels with the corresponding term for $H(a)$) and
\begin{equation*}
\begin{split}
-\frac{1}{2} \sum_{w \in (\overline{C}_{\texttt{+-}} \cap H(b))/\Z \gamma} & \log\left|\frac{(1-e(-(-h(\mathbf{x})+w(\mathbf{x}))/\gamma(\mathbf{x})))^N}{1-e(
-(-h(\mathbf{x})+w(\mathbf{x}))/(\gamma(\mathbf{x})/N))} \right|^2 
\\ & = -\sum_{n \geq 0} \log \left|\frac{(1-q_{z_0} q_{-\tau}^{n})^N}{1-q_{Nz_0}q^{n}_{-N\tau}} \right|
\end{split}
\end{equation*}
and so their sum equals
$
-\log |\theta_0^{(N)}(z_0,-\tau)|
$
where we write $ \theta_0^{(N)}(z,-\tau) = \theta_0(z,-\tau)^N/\theta_0(Nz,-N \tau)$. Choosing similarly $\beta \in H(a)$ such that $\beta(b)>0$ and $L \cap H(a)=\Z \beta + \Z \gamma$ we find that the contribution to the boundary term from vectors lying on $H(a)$ is $\log|\theta_0^{(N)}(z_0,\sigma)|$ where $\sigma:=\beta(\mathbf{x})/\gamma(\mathbf{x})$ has positive imaginary part since $(\beta,\gamma)$ is an oriented basis of $H(a)$. We conclude that
$$
\text{boundary term in } \eqref{eq:log_gamma_plus_bdy_term} = \log \left|\frac{\theta_0^{(N)}(z_0,\sigma)}{\theta_0^{(N)}(z_0,-\tau)} \right|.
$$

\begin{lemma} \label{lemma:bnd_term}
We have
$\log \left|\theta_0^{(N)}(z_0,\sigma)/\theta_0^{(N)}(z_0,-\tau) \right|=0$.
\end{lemma}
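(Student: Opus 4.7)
The strategy is to extract an $\mathrm{SL}_2(\Z)$-relationship between $\sigma$ and $-\tau$ from the action of $\varepsilon$, verify a small set of divisibility conditions, and then conclude via the modular transformation law of Siegel units.

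First I would record the $\mathrm{SL}_2(\Z)$-relation coming from $\varepsilon$. Since $\varepsilon \in \mathrm{SL}(L)$ maps $L \cap H(a)$ isomorphically onto $L \cap H(b)$ preserving orientations, writing $\varepsilon\beta = p'\alpha + q'\gamma$ and $\varepsilon\gamma = r'\alpha + s'\gamma$ and comparing the orientations of the bases $(\varepsilon\beta, \varepsilon\gamma)$ (oriented in $H(b)$) and $(\alpha, \gamma)$ (anti-oriented in $H(b)$) yields $p's' - q'r' = -1$. Because $\mathbf{x}$ is an eigenvector of $\varepsilon$ with eigenvalue $\sigma_\C(\varepsilon)^{-1}$, one gets $\sigma_\C(\varepsilon) = r'\tau + s'$ and $\sigma_\C(\varepsilon)\sigma = p'\tau + q'$, hence
$$
\sigma = M_1 \cdot (-\tau), \qquad M_1 := \begin{pmatrix} -p' & q' \\ -r' & s' \end{pmatrix} \in \mathrm{SL}_2(\Z).
$$

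Next I would verify two divisibility conditions, which form the arithmetic heart of the proof. Combining $\varepsilon \in 1 + \mathfrak{f}$ with the admissibility of $\lambda = n\gamma$ shows that $(\varepsilon - 1)h$ lies in $L \cap H(b)$; writing this element in the basis $(\alpha, \gamma)$ as $\tfrac{n}{q}\bigl(r'\alpha + (s'-1)\gamma\bigr)$ and using $\gcd(n, q) = 1$ forces $q \mid r'$ and $q \mid s' - 1$. Secondly, since $\varepsilon \in \mathcal{O}_K^\times$ stabilizes the fractional ideal $\mathfrak{a}^{-1}L = L + \Z\gamma/N$, the element $\varepsilon(\gamma/N) = (r'\alpha + s'\gamma)/N$ must lie in $L + \Z\gamma/N$; comparing the $\alpha$-component in $H(b)$ forces $N \mid r'$. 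This latter divisibility yields
$$
N\sigma = M_N \cdot (-N\tau), \qquad M_N := \begin{pmatrix} -p' & Nq' \\ -r'/N & s' \end{pmatrix} \in \mathrm{SL}_2(\Z).
$$

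The final step invokes the Siegel unit $g_\chi(\tau)$ for $\chi = (\chi_1, \chi_2) \in \Q^2 \setminus \Z^2$. Its modulus satisfies $|g_{(0,\chi_2)}(\tau)| = e^{-\pi\mathrm{Im}(\tau)/6}\,|\theta_0(\chi_2, \tau)|$, depends only on $\chi$ modulo $\Z^2$, and transforms as $|g_\chi(\gamma\tau)| = |g_{\chi\gamma}(\tau)|$ for $\gamma \in \mathrm{SL}_2(\Z)$ (acting on row vectors from the right). Taking $\chi = (0, z_0)$ with $\gamma = M_1$, the divisibilities $q \mid r'$ and $q \mid s' - 1$ give $\chi M_1 \equiv \chi \pmod{\Z^2}$, whence $|g_\chi(\sigma)| = |g_\chi(-\tau)|$ and
$$
\log|\theta_0(z_0, \sigma)|^2 - \log|\theta_0(z_0, -\tau)|^2 = \tfrac{\pi}{3}\bigl(\mathrm{Im}(\sigma) - \mathrm{Im}(-\tau)\bigr).
$$
The same argument applied to $\chi' = (0, Nz_0)$, $\gamma = M_N$ and theta-parameter scaled by $N$ (legitimate thanks to $N \mid r'$) yields
$$
\log|\theta_0(Nz_0, N\sigma)|^2 - \log|\theta_0(Nz_0, -N\tau)|^2 = \tfrac{N\pi}{3}\bigl(\mathrm{Im}(\sigma) - \mathrm{Im}(-\tau)\bigr).
$$
Subtracting $N$ times the first identity from the second makes the right-hand sides cancel, and the formula $\theta_0^{(N)} = \theta_0^N/\theta_0(N\,\cdot\,, N\,\cdot\,)$ then yields the lemma.

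The principal technical obstacle is the divisibility $N \mid r'$: the mere coprimality of $\mathfrak{a}$ and $\mathfrak{f}$ does not suffice, and one genuinely needs that the fractional ideal $\mathfrak{a}^{-1}L$ is stabilized by the unit $\varepsilon$. This is precisely the role played by the smoothing ideal $\mathfrak{a}$ in the construction.
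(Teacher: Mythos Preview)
Your argument is correct. The setup coincides with the paper's: you extract the same $\mathrm{SL}_2(\Z)$-relation $\sigma = M_1 \cdot(-\tau)$ from the action of $\varepsilon$ on $L\cap H(a)\xrightarrow{\sim} L\cap H(b)$, and you establish the same divisibilities $q\mid r'$, $q\mid s'-1$, $N\mid r'$ (the paper writes these as $q\mid nc$, $q\mid n(d-1)$, $N\mid c$). The divergence is only in the final step. The paper works on the elliptic curve $E_{-\tau}$: it observes that $\theta_0^{(N)}(\,\cdot\,,-\tau)$ and its pullback under $z\mapsto z/(-c\tau+d)$ are both meromorphic with divisor $N\cdot[0]-\sum_j[j/N]$, hence differ by a constant $C$, and then pins down $C^3=1$ (and $C=1$ when $(6,N)=1$) via the distribution relations of Kato's uniqueness principle. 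You instead invoke directly the classical transformation law $|g_\chi(\gamma\tau)|=|g_{\chi\gamma}(\tau)|$ for Siegel units at the two levels and cancel the $\tfrac{\pi}{3}\mathrm{Im}$ corrections.

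Your route is slightly more streamlined for the bare absolute-value equality the lemma asserts. The paper's route buys more: it shows the ratio $\theta_0^{(N)}(z_0,\sigma)/\theta_0^{(N)}(z_0,-\tau)$ is itself a cube root of unity, and this refinement is used immediately afterwards in the paper to deduce \eqref{remark:evaluation_-h}, namely that $\mathbf{\Gamma}_{\mathfrak{a},h}(L)\,\mathbf{\Gamma}_{\mathfrak{a},h}(-h(\mathbf{x}),\mathbf{x};L)$ is a cube root of unity (trivial when $(\mathrm{N}(\mathfrak{a}),6)=1$). Your argument does not yield that stronger statement without further work.
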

\begin{proof}
Since $b=\varepsilon a$, multiplication by $\varepsilon$ induces an isomorphism $L \cap H(a) \simeq L \cap H(b)$. Hence we can write
\begin{equation*}
\begin{split}
\varepsilon \beta & = -a' \alpha + b' \gamma \\
\varepsilon \gamma & = -c \alpha + d \gamma
\end{split}
\end{equation*}
for some integers $a',b',c, d$ with $a'd-b'c=\pm 1$. Evaluating at $\mathbf{x}$ and writing $\varepsilon_\C=\varepsilon(\mathbf{x}) \in \mathbf{C}^\times$, we find
\begin{equation*}
\begin{split}
\varepsilon_\C \beta(\mathbf{x}) & = (\varepsilon \beta)(\mathbf{x}) = -a' \alpha(\mathbf{x}) + b' \gamma(\mathbf{x}) \\
\varepsilon_\C \gamma(\mathbf{x}) & = (\varepsilon \gamma)(\mathbf{x}) = -c \alpha(\mathbf{x}) + d \gamma(\mathbf{x}).
\end{split}
\end{equation*}
Dividing by $\gamma(\mathbf{x})$ this gives
\begin{equation*}
\begin{split}
\sigma & = \frac{a' \cdot (-\tau) + b'}{c \cdot(-\tau) + d} \\
\varepsilon_\C &= c \cdot (-\tau) + d .
\end{split}
\end{equation*}
Since $\sigma$ and $-\tau$ have positive imaginary part, this implies that $a'd-b'c=1$. Note that $\mathfrak{a}^{-1}L \cap H(a) = \mathbf{Z} \beta + \mathbf{Z}(\gamma/N)$ and $\mathfrak{a}^{-1}L \cap H(b) = \mathbf{Z} \alpha + \mathbf{Z}(\gamma/N)$. Since multiplication by $\varepsilon$ preserves $\mathfrak{a}^{-1}L$, it follows that $N$ divides $c$. We also have $\lambda/q \equiv 1 \mod L$ and $\varepsilon  \equiv 1 \mod \mathfrak{f}$ and hence
\begin{equation*}
\begin{split}
\varepsilon \frac{\lambda}{q} = \varepsilon + \varepsilon \left( \frac{\lambda}{q}-1 \right) \in \varepsilon + L = \frac{\lambda}{q}+L.
\end{split}
\end{equation*}
Thus $n(\varepsilon \gamma - \gamma)/q = (\epsilon \lambda-\lambda)/q \in L \cap H(b)$, giving $nc \equiv 0 \mod q$ and $n(d-1) \equiv 0 \mod q$. 

The statement now follows from the uniqueness principle in \cite[Prop. 1.3]{Kato}. For $z \in \mathbf{C}$, the function $\theta_0(\cdot, -\tau)$ satisfies $\theta_0(z+1,-\tau)=\theta_0(z,-\tau)$ and  $\theta_0(z-\tau,-\tau)=-e^{-2\pi i z}\theta_0(z,-\tau)$, and also
$$
\theta_0(Nz,-N\tau) = \prod_{j=0}^{N-1} \theta_0(z+j/N,-\tau).
$$
It follows from these properties that the function $\theta_0^{(N)}(\cdot, -\tau)$ defined by
$$
\theta_0^{(N)}(z,-\tau) = \theta_0(z,-\tau)^N/\theta_0(Nz,-N \tau)
$$
is periodic with respect to the lattice $\Z+\Z\tau$ (note $N$ is odd) and so it defines a meromorphic function on the elliptic curve $E_\tau = \C/\Z+\Z \tau$. The divisor of this function is $N \cdot [0]-\sum_{j=0}^{N-1}[j/N]$ (since  $\theta_0(\cdot, -\tau)$ is entire with simple zeroes at $\Z+\Z \tau$).

Consider now the function
$$
\tilde{\theta}_0^{(N)}(z,-\tau) := \theta_0^{(N)}\left(\frac{z}{c\cdot (-\tau)+d},\frac{a' \cdot (-\tau)+b'}{c \cdot(-\tau)+d} \right).
$$
As with $\theta_0$ one shows that $\tilde{\theta}_0^{(N)}(\cdot,-\tau)$ is periodic for the lattice $\langle a' \cdot (-\tau)+b', c \cdot (-\tau)+d \rangle = \Z + \Z \tau$, and using that $N|c$ one sees that the resulting function on $E_\tau$ also has divisor $N \cdot [0]-\sum_{j=0}^{N-1}[j/N]$. It follows that
$$
\tilde{\theta}_0^{(N)}(z,-\tau) = C \cdot \theta_0^{(N)}(z,-\tau)
$$
for some constant $C \in \C^\times$. We claim that $C^3 =1$ for any odd $N$ and that $C=1$ if $(6,N)=1$. This follows from the following relations that one can check directly using \eqref{eq:def_theta_0}: if $(l,N)=1$, then
\begin{equation*}
\begin{split}
\prod_{m,n \in \Z/l\Z} \theta_0^{(N)}\left(\frac{z}{l}+\frac{m+n\cdot(-\tau)}{l}, -\tau  \right) &= \theta_0^{(N)}(z,-\tau) \\
\prod_{m,n \in \Z/l\Z} \tilde{\theta}_0^{(N)}\left(\frac{z}{l}+\frac{m+n\cdot(-\tau)}{l}, -\tau  \right) &= \tilde{\theta}_0^{(N)}(z,-\tau).
\end{split}
\end{equation*}
These imply that $C^{l^2}=C$ for any positive integer $l$ coprime to $N$. Thus for odd $N$ we have $C^4=C$, and if $(6,N)=1$, then the equations $C^4=C^9=C$ imply $C=1$.

We conclude that
$$
\theta_0^{(N)}(z,\sigma) = \zeta \cdot \theta_0^{(N)}(z(-c \tau+d),-\tau)
$$
with $\zeta^3=1$ (and $\zeta = 1$ when $(6,N)=1$). For $z_0=n/q$, the congruence properties of $c$ and $d$ modulo $q$ show that $z_0(-c \tau +d) \equiv z_0 \mod \Z + \Z \tau$, finishing the proof.
\end{proof}
Combining the above Lemmas we conclude that
\begin{equation}
\begin{split}
\zeta'_{\mathfrak{f},\mathfrak{a}}(\mathfrak{b},0) &= \frac{3}{\sqrt{\pi}} \int_0^\infty \int_0^\infty F_{\mathfrak{f},\mathfrak{a}}(t_1,t,h,\mathfrak{b}) \frac{dt_1}{t_1} \frac{dt}{t} \\
&= \frac{3}{\sqrt{\pi}} (I_{\texttt{+-}}(h)+I_{\texttt{-+}}(h)) \\
& =  \log|\mathbf{\Gamma}_{\mathfrak{a},h}(\mathfrak{fb}^{-1})|^2,
\end{split}
\end{equation}
proving Theorem \ref{T:KLintro}.

\medskip
\noindent
{\it Remark.}
The proof of Lemma \ref{lemma:bnd_term} implies the following property of the numbers $\mathbf{\Gamma}_{\mathfrak{a},h}(L)$. Note that the function $\Gamma_{a,b}$ defined in \eqref{felder-v-def} has the property
\begin{equation*}
\begin{split}
& \Gamma_{a,b}(-w,z;L) \Gamma_{a,b}(w,z;L) \\ &= \frac{\prod_{\delta \in (C_{\texttt{+-}}(a,b)\cap H(b))/\Z \gamma} \left(1-e^{-2\pi i (\delta (z) -w ) / \gamma (z)} \right) \left(1-e^{-2\pi i (\delta (z) +w ) / \gamma (z)} \right) }{\prod_{\delta \in (C_{\texttt{-+}}(a,b)\cap H(a))/\Z \gamma} \left(1-e^{-2\pi i (\delta (z) -w ) / \gamma (z)} \right) \left(1-e^{-2\pi i (\delta (z) +w ) / \gamma (z)} \right) }.
\end{split}
\end{equation*}
Using the notation in the above proof, this implies that
\begin{equation}  \label{remark:evaluation_-h}
\mathbf{\Gamma}_{\mathfrak{a},h}(L)\mathbf{\Gamma}_{\mathfrak{a},h}(-h(\mathbf{x}),\mathbf{x};L) = \frac{\theta_0^{(N)}(z_0,\sigma)}{\theta_0^{(N)}(z_0,-\tau)} = \zeta,
\end{equation}
where $\zeta^3=1$; moreover, $\zeta=1$ if $(6,\mathrm{N}(\mathfrak{a}))=1$.

\subsubsection{A geometric interpretation of the elliptic gamma function}

Felder--Henriques--Rossi--Zhu \cite{Felder} gave a geometric interpretation of the elliptic gamma function. A \emph{triptic curve} is a pointed holomorphic stack of the the form $[\C / \Z^3 ]$ where the action is induced by a homomorphism $\Z^3 \to \C$ of real rank $2$. It plays a role analogous to that of the elliptic curve for the theta function. The moduli space of oriented triptic curves is the complex stack 
\begin{equation} \label{Stack}
\left[ \SL_3 (\Z ) \backslash (\C \mathrm{P}^2 - \R \mathrm{P}^2) \right].
\end{equation}
The elliptic gamma function is a section of a hermitian abelian gerbe over the universal oriented triptic curve. Our construction can be understood in terms of a modified version of this gerbe, defined using the ideal $\mathfrak{a}$. The stack \eqref{Stack} is however far from being algebraic, thus proving our conjecture certainly requires new ideas. We nevertheless want to point out that --- up to replacing $\SL_3 (\Z)$ by $\SL_2 (\Z[1/p])$ and  $\C \mathrm{P}^2 - \R \mathrm{P}^2$ by the product of $\mathcal{H}$ by the Drinfeld upper half-plane --- the above geometric picture seems quite parallel to that of the recent work of Darmon and Vonk \cite{DarmonVonk} on explicit class field theory for real quadratic fields.

\section{A few additional numerical examples}\label{section:addition_numerical_examples}

In this Section we present numerical evidence for Conjecture \ref{S:conj} in some examples. The computation relies on the summation formula \cite[eq. (15)]{FelderVarchenko} for the logarithm of the elliptic gamma function, whose convergence is slow if $\tau$ and $\sigma$ have small imaginary parts. This is the main bottleneck towards computing with larger input data. It would be interesting to find faster methods to evaluate $\mathbf{\Gamma}_{\mathfrak{a},h}(\mathfrak{fb}^{-1})$, for example by making more efficient use of the $\mathrm{SL}_3(\Z)$-modularity of $\Gamma(z,\tau,\sigma)$ \cite{FelderVarchenko}.

\subsection{$K= \Q (\beta)$ where $\beta^3-3 = 0$} 

The discriminant is $-3^5$, the class number is $1$ and $\beta^2 -2$ is a fundamental unit. Take $\mathfrak{f}$ to be the unique ideal of norm $3$. Then the fundamental unit belongs to $\mathcal{O}_{\mathfrak{f}}^{+,\times}$.

The narrow ray class field $K(\mathfrak{f})$ is a quadratic extension of $K$. We begin with this simple example because we carried out our first tests on it. 

We first take the smoothing ideal $\mathfrak{a}$ to be the single prime ideal with norm $5$. Then the complex numbers $\mathbf{\Gamma}_{\mathfrak{a} , h} (  \mathfrak{f} )$ appear to be independent of $h$ and equal to $ 0.18108\ldots + i \cdot 0.21746\ldots$, a complex number that Pari-GP identifies as a root of 
$$x^6 - 3 x^5 +6x^4 +17x^3 +6x^2 - 3 x+1$$
to 1000 digits. Here again the splitting field is the quadratic extension $K(\mathfrak{f})$ of $K$. Since the class number formula implies Stark's conjecture in this setting \cite[Thm. 5.4]{Tate}, our Theorem \ref{T:KLintro} shows that $|\mathbf{\Gamma}_{\mathfrak{a},h}(\mathfrak{f})|$ is algebraic.

Let us now take the smoothing ideal $\mathfrak{a}$ to be the single prime ideal with norm $2$. The \texttt{algdep} command of Pari-GP allows to show that the complex numbers $\mathbf{\Gamma}_{\mathfrak{a} , h} (\mathfrak{f} )$ are very close to a root of a degree $24$ irreducible polynomial over $\Q$. As predicted by one of the remarks following our conjecture, up to multiplication by $\pm 1$, the complex number $\mathbf{\Gamma}_{\mathfrak{a} , h} ( \mathfrak{f} )^4$ is independent of $h$, and Pari-GP identifies it as $\pm \mathbf{u}_{2}$ where $\mathbf{u}_2$ is a root of 
$$x^6 + 24 x^5 + 168 x^4 + 98 x^3 + 168 x^2 + 24 x + 1$$
to 1000 digits. The splitting field of this polynomial is the quadratic extension $K(\mathfrak{f})$ of $K$.

\subsection{$K= \Q (\beta)$ where $\beta^3-7 = 0$} 

The discriminant is $-3^3 \cdot 7^2$, the class number is $3$ and $2-\beta$ is a fundamental unit. Take $\mathfrak{f}$ to be the unique ideal of norm $3$. The corresponding narrow ray class field $K(\mathfrak{f})$ has relative degree $6$ over $K$ and the fundamental unit is a generator of $\mathcal{O}_{\mathfrak{f}}^{+,\times}$.

In this case it turns out that one can find many choices of $h$ and $L$ such that that the set $F/\Z \gamma$ appearing in \eqref{eq:Gamma_a_b_as_prod_over_F} is a singleton. This explains why there is only one occurence of the basic elliptic gamma function $\Gamma$ in the example of the introduction. There the smoothing ideal $\mathfrak{a}$ is the single ideal with norm $5$, and the complex numbers $\mathbf{\Gamma}_{\mathfrak{a} , h} ( \mathfrak{f} )$ appear numerically to be independent of $h$.

Let $\mathfrak{b}$ be the unique ideal of norm $2$. Its class is a generator of the ray class group. We identify the alleged units $\mathbf{u}_{ \mathfrak{f}\mathfrak{b}^{-k}, \mathfrak{a}}, \ (0 \leq k \leq  5)$, with their complex images given by the elliptic gamma values.
As predicted by our conjecture one verifies that these numbers coincide with the roots of \eqref{Intro:pol} to 1000 digits and that these roots satisfy parts (2) and (3) of our conjecture. The number \eqref{Intro:sol} is $\mathbf{u}_{ \mathfrak{fb}^{-5} , \mathfrak{a}}$ (taking $\beta \in \C$ to have negative imaginary part). It is very close to the image, under $\sigma_{\mathfrak{b}}^{-1}$, of the root of \eqref{Intro:pol} that is 
$$\mathbf{u}_{\mathfrak{f} , \mathfrak{a}} \approx
-0.06261629382 \ldots - i \cdot 0.2669850711 \ldots .
$$

We have similarly tested our conjecture with other smoothing ideals, e.g.  $\mathfrak{a}_{11}$  the single prime ideal with norm $11$.

\subsection{$K= \Q (\beta)$ where $\beta^3-2 = 0$} 

The discriminant is $-108$, the class number is $1$ and $\beta -1$ is a fundamental unit. Fix a complex embedding $K \hookrightarrow \C$.

Following Ren and Sczech \cite[\S 4.1]{RenSczech} we first take $\mathfrak{f} = (3)$. The corresponding narrow ray class field $K(\mathfrak{f})$ has relative degree $6$ over $K$ and $(\beta-1)^3$ is a generator of $\mathcal{O}_{\mathfrak{f}}^{+, \times}$. The field $K(\mathfrak{f})$ contains $18$ roots of unity. We fix an embedding $K(\mathfrak{f}) \hookrightarrow \C$ extending the fixed complex embedding of $K$.

Let our smoothing ideal $\mathfrak{a}$ be the single prime ideal with norm $5$. The class of the ideal $\mathfrak{b} = (\beta)$ generates the ray class group. Using our conjecture we obtain complex numbers that coincide with the roots of 
\begin{equation*}
\begin{split}
x^6 &-(3\beta^2-9\beta+6)x^5 +(23\beta^2-17\beta-16)x^4 \\ & + (48\beta^2 -43 \beta -23)x^3 +(23\beta^2-17\beta-16)x^2 - (3\beta^2-9\beta +6)x +1
\end{split}
\end{equation*}
to $1000$ decimal digits.   This polynomial is irreducible over $K$ with splitting field  $K(\mathfrak{f}).$ Its image under the real embedding of $K$ has all its roots lying on the unit circle.
 
Ren and Sczech \cite{RenSczech} have checked numerically that among the eighteen roots of 
\begin{multline*}
x^{18} - 7767x^{17} + 51550065x^{16} - 199524692622x^{15} + 520755985257966x^{14} \\ - 1828056747902004x^{13} + 24870880029533226x^{12} - 80588629212013080x^{11} \\ + 116076408275027511x^{10} - 118102314911180623x^9 + 116076408275027511x^8 \\ - 80588629212013080x^7 + 24870880029533226x^6 - 1828056747902004x^5  \\ + 520755985257966x^4 - 199524692622x^3 + 51550065x^2 - 7767x + 1,
\end{multline*}
six have absolute value $1$ and the twelve others are the complex images of the Galois transforms of ``the'' alleged Stark unit $\mathbf{u}_{\rm Stark}$ (uniquely defined up to the $18$ roots of unity in $K(\mathfrak{f})$) that is denoted by $\eta_6$ in \cite{RenSczech}. We verify numerically that
$$
\mathbf{u}_{\rm Stark}^{\mathrm{N} (\mathfrak{a})-\sigma_{\mathfrak{a}}} \approx \mathbf{\Gamma}_{\mathfrak{a},h}(\mathfrak{f})^{18}.
$$

\medskip

We have also numerically checked  other ramification ideals, e.g. $\mathfrak{f} = (5)$.

\subsection{$K= \Q (\beta)$ where $\beta^3 -\beta+1=0$} It is the complex cubic field with minimal discriminant, equal to $-23$. It is of 
class number $1$ and $-\beta$ is a fundamental unit.  Fix an embedding $K \hookrightarrow \C$.

As in \cite[\S 4.2]{RenSczech} we consider $\mathfrak{f} = (5)$. The corresponding narrow ray class field $K(\mathfrak{f})$ has relative degree $4$ over $K$ and $\beta^{24}$ is a generator of $\mathcal{O}_{\mathfrak{f}}^{+, \times}$. The field $K(\mathfrak{f})$ contains $10$ roots of unity. We fix an embedding $K(\mathfrak{f}) \hookrightarrow \C$ extending the fixed complex embedding of $K$.

Let our smoothing ideal $\mathfrak{a}$ be the single prime ideal with norm $7$. Using our conjecture we  obtain complex numbers that coincide with the complex roots of 
$$x^4+(70 \beta^2+15 \beta -104)x^3-(65 \beta^2+155 \beta + 89)x^2+(70 \beta^2+15 \beta -104)x+1$$
to $1000$ decimal digits.   This polynomial is irreducible over $K$ with splitting field  $K(\mathfrak{f}).$ Its image under the real embedding of $K$ has all its roots lying on the unit circle.

Ren and Sczech have checked numerically that, in this example, ``the'' alleged Stark unit $\mathbf{u}_{\rm Stark}$ (uniquely defined up to the $10$ roots of unity in $K(\mathfrak{f})$) is a root of 
$$x^4 - (395\beta^2 - 60 \beta - 776) x^3 + (495 \beta^2 - 385 \beta -1374) x^2 -  (395\beta^2 - 60 \beta - 776) x +1.$$
It is denoted by $\eta_4$ in \cite{RenSczech}. 
We verify numerically that
$$
\mathbf{u}_{\rm Stark}^{\mathrm{N} (\mathfrak{a})-\sigma_{\mathfrak{a}}} \approx \mathbf{\Gamma}_{\mathfrak{a},h}(\mathfrak{f})^{10}.
$$

\section*{Appendix: Eisenstein's Jugendtraum}

This section provides some historical background underpinning our work, 
deeply rooted in Gotthold Eisenstein's largely ignored paper \cite{Eisenstein}.

In 1844, three years before his masterpiece \cite{eis2} laying alternative foundations for a theory of elliptic functions, Eisenstein published in Crelle's journal a short prospective note \cite{Eisenstein} where he first  sketches his own  construction  of classical trigonometric and elliptic functions by considering nested conditionally convergent infinite products.

In the last pages, he indicates how one could go beyond and investigate  higher dimensional cases.
Aware of the fact that no meromorphic functions over $\mathbf C$ can be triply periodic, he first states: 

\begin{quote}
 Following the remark [of his \S 1] related to simple and  double infinite products relative to circular and elliptic functions, we should consider as an analog of greater degree the \textit{quotients of quotients of infinite triple products}\footnote{Eisenstein's  emphasis; this sentence  has been  ridiculed by Jacobi in \cite{Jacobi}, who misjudged  Eisenstein's  original point of view; see also \cite{Adler}.} of the form
\begin{equation}\label{eisprod}\prod \left(1-\frac{x}{\lambda+\lambda'A+\lambda'' A'}\right),\end{equation} where $A$ and $A'$ are constants and $\lambda, \lambda', \lambda''$ represent indices with the ordinary meaning attached to them.  [...] 
 
Unlike the simple or double products, these multiple products would not have a determined analytical sense if one wishes to
assign to the indices all possible values.'' 
\end{quote}
Eisenstein goes further and suggests:
\begin{quote}
Nevertheless nothing  refrains from considering  these multiple products  as soon as the indices are constrained by certain \textit{restrictions} that enable to discard precisely  those circumstances  contrary to  convergence  \nolinebreak  : for example, certain \textit{inequality conditions} could be assigned to  the  indices $\lambda, \lambda', \lambda''$
and adjoined to the product, to the effect that the multiplication sign is extended to the values of the indices satisfying these conditions, the other values being omitted. Generally speaking, one cannot say here anything further about  the choice of these conditional equations. There exists however a whole class of similar functions which have very close connections to certain results in Number Theory.
\end{quote}

\medskip 

According to  this paragraph, it looks plausible  to us that Eisenstein  had  anticipated the construction of (quotients of) the elliptic gamma function by means of the  simple manipulations for multiple infinite products he pioneered. As we shall now exemplify, 
Eisenstein's own convergence method  quickly allows for an explicit  relation  between  a triple  product of the shape (\ref{eisprod}), limited by the \textit{inequality conditions}
$\{(\lambda, \lambda',\lambda'')\in \mathbf Z^3 \; \big| \;  \lambda',\lambda'' >0\},$ and  the function $\Gamma(z, A, A').$  \medskip

Recall (see for instance \cite{Weil,CS} for a survey) that in dimension $1$ his method provides  the classical identity for trigonometric functions 
\begin{equation*}
\begin{split}
\prod_{\lambda\in \mathbf Z}^{(\textrm{eis})}\left(1-\frac{z}{\lambda+u}\right)& =  \lim_{M\rightarrow +\infty}
\prod_{\lambda=-M}^M \left(1-\frac{z}{\lambda+u}\right)
\\
& = 
\frac{\sin \pi(u-z)}{\sin(\pi u)}\\ 
& = e^{i\pi z}\left(\frac{1-e^{2\pi i (u-z)}}{1-e^{2\pi i u}}\right).
\end{split}
\end{equation*}
In order to improve the convergence of the next infinite products,  it is convenient to get rid of the exponential factor by introducing a smoothed version with an extra integral parameter $N>1 :$ 

\begin{equation*}
\prod_{\lambda\in \mathbf Z}^{(\textrm{eis})}\frac{\left(1-\frac{z}{\lambda+u}\right)^N }{\left(1-\frac{Nz}{\lambda+Nu}\right)}  = \frac{(1-e^{2\pi i (u-z)})^N (1-e^{2\pi i Nu})}{(1-e^{2\pi i u})^N(1-e^{2\pi i (Nu-Nz)})}\cdot 
\end{equation*}

Now we substitute $u=A \lambda'+A'\lambda''+ v,$ with $A, A'\in \mathcal H, $  and proceed to the absolutely convergent double product over $\lambda',\lambda''\in \mathbf Z$  constrained by the inequality $\lambda',\lambda''>0. $ This  immediately leads  to 

\begin{align*} \label{eistoGamma}
Q_{\texttt{++}}=&\, \prod_{\lambda\in \mathbf Z, \lambda',\lambda''>0}^{(\textrm{eis})} 
\frac{\left(1-\frac{z}{\lambda+A \lambda'+A'\lambda''+ v}\right) ^N }{\left(1-\frac{Nz}{\lambda+NA \lambda'+NA'\lambda''+ Nv}\right)} \\ 
 =&\,  \prod_{\lambda',\lambda''>0} \frac{(1-e^{2\pi i (A \lambda'+A'\lambda''+ v-z)})^N 
 (1-e^{2\pi i (NA \lambda'+NA'\lambda''+N v) }) }{(1-e^{2\pi i (A \lambda'+A'\lambda''+ v)})^N
(1-e^{2\pi i (NA \lambda'+NA'\lambda''+N v-Nz) } )}\cdot
\end{align*}
We write $Q_{\texttt{--}}$ for the similar ratio, now indexed by $\lambda',\lambda''\in \Z$ satisfying the inequalities 
$\lambda',\lambda''\leq 0.$ It is then straightforward to express the quotient of quotients $Q_{\texttt{++}}/Q_{\texttt{--}}$  solely in terms of elliptic gamma functions as 

\begin{equation}\label{oneEisensteinquotientofquotients}
\frac{Q_{\texttt{++}}}{Q_{\texttt{--}}}=\frac{\Gamma(z-v,A,A')^N/\Gamma(-v,A,A')^N}{\Gamma(Nz-Nv,NA,NA')/\Gamma(-Nv,NA,NA')}\cdot
\end{equation}

\medskip

Additionally, Eisenstein highlights  as a case of special interest for Number Theory when the inequalities  correspond to  grouping together values of the indices ``in geometric progression.''
We are inclined following \cite{RenSczech} to infer that Eisenstein could  have foreseen applications to the arithmetic of cubic complex fields, for which the unit group is of rank one.
 
He concludes:

\begin{quote}
Pursuing  this path, the functions one is led to seem to possess  very remarkable properties; they open up a field in which   the most prolific investigations  offer themselves and appear to be ground where  the most difficult parts 
of Analysis and Number Theory join.
\end{quote}

This cryptic prediction,``as good as lost"  as many original ideas of Eisenstein (\cite{Weil}, p.4), predates  the utterance of Kronecker's Jugendtraum by 36 years.  As the very name indicates, Kronecker must have carried his own aspiration over a long period of time. We hope our paper provides some evidence that their two programs, maybe conceived while they were together students in Berlin,  have tight connections. 
It is however  to be observed  that only Eisenstein's text  points to specific functions (``infinite triple products'') to pursue his dream beyond the $\GL_2$ setting. 

Considering that Eisenstein was only 21 when he outlined this rough sketch of a  program, and that the arithmetic applications we have unveiled are, in the case of complex cubic fields, quite close to Kronecker's Jugendtraum, or Hilbert's twelfth problem, 
we felt entitled to name it ``Eisenstein's Jugendtraum.''

\smallskip

As a closing note, let us recast our initial numerical data somewhat  differently
to confirm that Eisenstein's meromorphic quotients of quotients 
(\ref{oneEisensteinquotientofquotients}) undoubtedly  possess striking number theoretic properties.
In parallel to (\ref{Intro:sol}), it is legitimate to  choose the specializations $z=1/3$, $v=-1/3$, $N=5$ together with 
$A=-(2\beta+\beta^2)/15$, $A'=-(2+\beta)/15$ where $\beta=\sqrt[3]{7}e^{-\frac{2i\pi}{3}}.$
We deduce that, even if neither $Q_{\texttt{++}}$ nor $Q_{\texttt{--}}$ seem to be algebraic individually, their ratio 

\begin{equation}
\frac {Q_{\texttt{++}}}{Q_{\texttt{--}}}\approx -1735.727884\ldots+i\cdot 336.859173\ldots
\end{equation}
does. To $1000$ digits of accuracy, this is the square of the  root (\ref{Intro:sol}) of the degree six polynomial with coefficients in $\Z[\beta]$ displayed in the Introduction.

\bibliographystyle {plain}
\bibliography{BCG_cubic}
\end{document}